\numberwithin{equation}{section}
\theoremstyle{plain}
\newtheorem{thm}{\textbf{Theorem}}[section]
\newtheorem*{thm*}{\textbf{Theorem}}
\newtheorem{prop}[thm]{\textbf{Proposition}}
\newtheorem*{prop*}{\textbf{Proposition}}
\newtheorem{lem}[thm]{\textbf{Lemma}}
\newtheorem*{lem*}{\textbf{Lemma}}
\theoremstyle{definition}
\newtheorem*{dfn*}{\textbf{Definition}}
\newtheorem{rem}[thm]{\textbf{Remark}}
\newtheorem*{rem*}{\textbf{Remark}}
\newtheorem*{ass*}{\textbf{Assumption}}
\setlist{align=left}
\title{Classification of unstable travelling wave solutions to KdV type equations}
\date{}
\author{Kaito KOKUBU%
    \footnote{
    Department of Mathematics, Graduate School of Science, Tokyo University of Science.\\
    1-3 Kagurazaka, Shinjuku-ku, Tokyo 162-8601, Japan.\\
    e-mail: \texttt{1123701@ed.tus.ac.jp}}
}
\begin{document}

\maketitle

\renewcommand{\thefootnote}{\fnsymbol{footnote}}
\footnote[0]{\textbf{2020 Mathematics Subject Classification.} 35Q53, 35R11, 35B35, 35C07.}
\footnote[0]{\textbf{Keywords and Phrases.} Korteweg--de Vries equation; Gardner equation; Benjamin--Ono equation; Fractional Laplacian; Travelling wave; instability; Ground state.}
\renewcommand{\thefootnote}{\arabic{footnote}}

\begin{abstract}
    We study travelling wave solutions to Korteweg--de Vries type equations which have double power nonlinearities with integer indices, such as the Gardner equation, and fractional dispersion.
    Whether these equations have ground state solutions depends on signatures of nonlinearities and parity combinations of the two indices.
    The aim of this study is to give the classification of phenomena of travelling wave solutions from the perspective of the signatures and parities of the indices.
    In this paper, we focus on unstable travelling wave solutions.
\end{abstract}


\section{Introduction}

In this paper, we consider the instability of travelling wave solutions to Korteweg--de Vries type equations
\begin{equation}
    \partial_{t}u + \partial_{x}f(u) - \partial_{x} D_{x}^{\sigma} u = 0, \quad (t, x) \in \mathbb{R} \times \mathbb{R}, \label{eq:Intro_gKdV_general;gkdv_inst}
\end{equation}
where $u = u(t,x)$ is a real-valued unknown function, $\sigma$ is a real number satisfying $1 \leq \sigma \leq 2$, and the operator $D_{x}^{\sigma}$ is the Fourier multiplier with symbol $|\xi|^{\sigma}$.
This operator is also denoted by $(-\partial_{x}^{2})^{\sigma/2}$.

When $\sigma = 2$ and $f(s) = s^{2} (s \in \mathbb{R})$, equation \eqref{eq:Intro_gKdV_general;gkdv_inst} coincides with the Korteweg--de Vries equation, which describes physical dynamics of waves in shallow water (see e.g.\ \cite{Korteweg-deVries}).
When $\sigma = 1$ and $f(s) = s^{2}$, equation \eqref{eq:Intro_gKdV_general;gkdv_inst} is the Benjamin--Ono equation, which physically describes dynamics of internal waves in stratified fluids (see e.g.\ \cite{Benjamin,Ono_1975}).

A travelling wave solution is a solution to \eqref{eq:Intro_gKdV_general;gkdv_inst} of the form $u(t, x) = \phi(x - ct)$, where $c$ is a positive constant representing the speed of the wave, and $\phi \in H^{\sigma/2}(\mathbb{R})$ is a nontrivial solution to the stationary problem
\begin{equation}
    D_{x}^{\sigma} \phi + c \phi - f(\phi) = 0, \quad x \in \mathbb{R}. \label{eq:Intro_SP_general;gkdv_inst}
\end{equation}
We say that $\phi \in H^{\sigma/2}(\mathbb{R})$ is a ground state solution to \eqref{eq:Intro_SP_general;gkdv_inst} if $\phi$ satisfies
\begin{equation}
    S_{c}(\phi) = \inf\{ S_{c}(v): \ v \in H^{\sigma/2}(\mathbb{R}) \setminus \{0\}, \ \text{$v$ is a solution to \eqref{eq:Intro_SP_general;gkdv_inst}} \},
\end{equation}
where the functional $S_{c}$ is the action functional corresponding to equation \eqref{eq:Intro_SP_general;gkdv_inst} defined in $H^{\sigma/2}(\mathbb{R})$ as $S_{c}(v) \coloneq E(v) + cM(v)$ with the energy functional $E$ and the mass functional $M$ defined as
\begin{equation}
    E(v) \coloneq \frac{1}{2} \| D_{x}^{\sigma/2}v \|_{L^{2}}^{2} - \int_{\mathbb{R}} F(v) \, dx, \quad M(v) = \frac{1}{2} \| v \|_{L^{2}}^{2},
\end{equation}
where $F(s)$ is the primitive function of the nonlinearity $f(s)$.
In this paper, we mainly consider power type nonlinearities, which is sufficient for the functional $S_{c}$ to belong to $C^{2}(H^{\sigma/2}(\mathbb{R}), \mathbb{R})$.
Moreover, we remark that $v \in H^{\sigma/2}(\mathbb{R})$ is a solution to \eqref{eq:Intro_SP_general;gkdv_inst} if and only if $S_{c}'(v) = 0$, where $G'$ denotes the Fr\'{e}chet derivative of a functional $G$.
In this paper, we mainly consider travelling wave solutions to \eqref{eq:Intro_gKdV_general;gkdv_inst} constructed with ground state solutions to \eqref{eq:Intro_SP_general;gkdv_inst}.

Next, we define the stability and instability of travelling wave solutions.

For $r > 0$ and $v \in H^{\sigma/2}(\mathbb{R})$, we put
\begin{equation}
    U_{r}(v) \coloneq \{ u \in H^{\sigma/2}(\mathbb{R}): \ \inf_{y \in \mathbb{R}} \| u - v(\cdot - y) \|_{H^{\sigma/2}}^{2} < r \}.
\end{equation}
Let $\phi \in H^{\sigma/2}(\mathbb{R})$ be a nontrivial solution to \eqref{eq:Intro_SP_general;gkdv_inst}.
We say that a travelling wave solution $\phi(x-ct)$ to \eqref{eq:Intro_gKdV_general;gkdv_inst} is stable if, for any $\varepsilon > 0$, there exist $\delta > 0$ such that if $u_{0} \in U_{\delta}(\phi)$, then the time-global $u(t) \in C([0, \infty), H^{\sigma/2}(\mathbb{R}))$ to \eqref{eq:Intro_gKdV_general;gkdv_inst} exists and satisfies that $u(t) \in U_{\varepsilon}(\phi)$ holds for all $t \geq 0$.
Otherwise, we say that a travelling wave solution $\phi(x-ct)$ to \eqref{eq:Intro_gKdV_general;gkdv_inst} is unstable.

The stability and instability of travelling wave solutions to \eqref{eq:Intro_gKdV_general;gkdv_inst} with single power nonlinearities $f(s) = s^{p}$ ($p \in \mathbb{N}$, $2 \leq p < \infty$) have been studied well.
In this case, it is known that the stationary problem \eqref{eq:Intro_SP_general;gkdv_inst} has the unique positive and even ground state solution (existence: Weinstein~\cite{Weinstein}, uniqueness for $1 \leq \sigma < 2$: Frank--Lenzmann~\cite{Frank-Lenzmann}).
By an abstract theory developed by Bona--Souganidis--Strauss~\cite{BSS1987}, we can find that the travelling wave solution to \eqref{eq:Intro_gKdV_general;gkdv_inst} constructed with the positive ground state solution to \eqref{eq:Intro_SP_general;gkdv_inst} is stable if $p < 2\sigma + 1$, or unstable if $p > 2\sigma + 1$
(for the stability results, see also \cite{Weinstein}).
According to \cite{BSS1987}, travelling wave solutions are stable if $\partial_{c}^{2} d(c) > 0$, or unstable if $\partial_{c}^{2} d(c) < 0$, where $d(c) = S_{c}(\psi_{c})$ with $\psi_{c}$ denoting the ground state solution for $c > 0$.
When nonlinearities of \eqref{eq:Intro_gKdV_general;gkdv_inst} are single power ones, we can find the scaling property of the ground state solution to \eqref{eq:Intro_SP_general;gkdv_inst} such that
\begin{equation}
    \psi_{c}(x) = c^{1/(p-1)} \psi_{1}(c^{1/\sigma} x). \label{eq:Intro_0010;gkdv_inst}
\end{equation}
This property allows us to calculate $\partial_{c}^{2} d(c)$ so easily that we can see that $\partial_{c}^{2} d(c) > 0$ holds if $p < 2\sigma + 1$, or $\partial_{c}^{2} d(c) < 0$ if $p > 2\sigma + 1$.
When $\sigma = 2$ and $p = 2 \sigma + 1 = 5$, where the method of \cite{BSS1987} is not applicable, Martel--Merle~\cite{Martel-Merle_2001} proved that the travelling wave solution to \eqref{eq:Intro_gKdV_general;gkdv_inst} is unstable by using a virial type functional (see also \cite{Farah-Holmer-Roudenko}).
Here we remark that the exponent $2 \sigma + 1$ is the $L^{2}$-critical one for the KdV type equation \eqref{eq:Intro_gKdV_general;gkdv_inst}.

The purpose of this study is to analyze the stability and instability of travelling wave solutions to \eqref{eq:Intro_gKdV_general;gkdv_inst} with integer-indices double power nonlinearities such as
\begin{equation}
    \partial_{t}u + \partial_{x} (au^{p} + u^{q}) - \partial_{x} D_{x}^{\sigma} u = 0, \quad (t,x) \in \mathbb{R} \times \mathbb{R}, \label{eq:Intro_gKdV_DoublePower;gkdv_inst}
\end{equation}
where $a \in \{ \pm 1 \}$ and $p, q \in \mathbb{N}$ satisfying $2 \leq p < q < \infty$.
The remarkable example of these equations is the case that $\sigma = 2$, $p = 2$, and $q = 3$, where the equation \eqref{eq:Intro_gKdV_DoublePower;gkdv_inst} coincides with the so-called Gardner equation, which was introduced by Miura--Gardner--Kruskal~\cite{Miura-Gardner-Kruskal_II}.

Throughout this paper, we assume the local well-posedness of the Cauchy problem associated with \eqref{eq:Intro_gKdV_DoublePower;gkdv_inst} in the energy space $H^{\sigma/2}(\mathbb{R})$ as follows:
\begin{ass*}
    Let $1 \leq \sigma \leq 2$.
    Then, for any $u_{0} \in H^{\sigma/2}(\mathbb{R})$, there exist $T = T(\| u_{0} \|_{H^{\sigma/2}}) > 0$ and a unique solution $u(t) \in C([0, T), H^{\sigma/2}(\mathbb{R}))$ to \eqref{eq:Intro_gKdV_DoublePower;gkdv_inst} with $u(0) = u_{0}$ which satisfy the following conservation laws:
    \begin{equation}
        E(u(t)) = E(u_{0}), \quad M(u(t)) = M(u_{0}), \quad t \in [0, T).
    \end{equation}
\end{ass*}
We remark that Molinet--Tanaka~\cite{Molinet-Tanaka_2024} showed that this assumption actually holds if $4/3 \leq \sigma \leq 2$.

The stationary problem derived from \eqref{eq:Intro_gKdV_DoublePower;gkdv_inst} is the following:
\begin{equation}
    D_{x}^{\sigma} \phi + c \phi - a \phi^{p} - \phi^{q} = 0, \quad x \in \mathbb{R}. \label{eq:Intro_dSPc;gkdv_inst}
\end{equation}
The author has studied existence of ground state solutions to \eqref{eq:Intro_dSPc;gkdv_inst} and found that parity combinations of indices $p$ and $q$, and the signature $a$ affect the existence phenomena of ground state solutions to \eqref{eq:Intro_dSPc;gkdv_inst} as follows:
\begin{thm}[\cite{Kokubu2024,kokubu2025stability}] \label{thm:Intro_dSPc_previous;gkdv_inst}
    Let $1 \leq \sigma \leq 2$, $p, q \in \mathbb{N}$, $2 \leq p < q < \infty$, and $c > 0$.
    Then the following properties hold:
    
    \begin{enumerate}[label=\textup{(\Roman*)} \ ]
        \item \underline{The case $a = +1$}

        If $q$ is odd, then there exists a positive ground state solution to \eqref{eq:Intro_dSPc;gkdv_inst}.

        \item \underline{The case $a = -1$}

        \begin{enumerate}[label=\textup{(\roman*)} \ ]
            \item If $p$ is odd, then there exists a positive ground state solution to \eqref{eq:Intro_dSPc;gkdv_inst}.
            \item If $p$ is even and $q$ is odd, then there exists a negative ground state solution to \eqref{eq:Intro_dSPc;gkdv_inst}.
        \end{enumerate}
    \end{enumerate}

    Moreover, a ground state solution $\phi$ to \eqref{eq:Intro_dSPc;gkdv_inst} obtained above is characterized as follows:
    \begin{equation}
        S_{c}(\phi) = \inf\{ S_{c}(v): \ v \in H^{\sigma/2}(\mathbb{R}) \setminus \{0\}, \ K_{c}(v) = 0 \}, \label{eq:Intro_GroundState_Characterization;gkdv_inst}
    \end{equation}
    where $K_{c}$ is the Nehari functional defined as $K_{c}(v) \coloneq \langle S_{c}'(v), v \rangle$.
\end{thm}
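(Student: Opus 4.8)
The plan is to realize a ground state as a minimizer of the action $S_c$ on the Nehari manifold
\begin{equation}
    \mathcal{N}_c \coloneq \{ v \in H^{\sigma/2}(\mathbb{R}) \setminus \{0\} : K_c(v) = 0 \},
\end{equation}
and to set $m_c \coloneq \inf_{v \in \mathcal{N}_c} S_c(v)$; the characterization \eqref{eq:Intro_GroundState_Characterization;gkdv_inst} is then the statement that this infimum is attained and agrees with the ground state level. The first task is to check that $\mathcal{N}_c \neq \emptyset$ and $m_c > 0$. For a nonzero test function $v$ I would study the scalar map $\lambda \mapsto K_c(\lambda v) = \lambda^2(\|D_x^{\sigma/2} v\|_{L^2}^2 + c\|v\|_{L^2}^2) - a\lambda^{p+1}\int_{\mathbb{R}} v^{p+1}\,dx - \lambda^{q+1}\int_{\mathbb{R}} v^{q+1}\,dx$, whose positive zero exists precisely when the nonlinear part can be made positive for large $\lambda$. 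This is exactly where the signature $a$ and the parities of $p,q$ enter: a test function of definite sign making $\int_{\mathbb{R}} v^{q+1}\,dx>0$ while controlling the $p$-term is available in the three listed cases but not in the remaining defocusing configurations. The lower bound $m_c > 0$ then follows from a Gagliardo--Nirenberg estimate, which forces $\|v\|_{H^{\sigma/2}}$ to stay bounded away from $0$ on $\mathcal{N}_c$.

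Next I would establish compactness of a minimizing sequence. Eliminating one monomial via the constraint $K_c(v)=0$ rewrites $S_c|_{\mathcal{N}_c}$ as a coercive expression in $\|v\|_{H^{\sigma/2}}$ (using $2 \leq p < q$), so any minimizing sequence $(v_n)$ is bounded. I would then invoke Lions' concentration--compactness principle applied to the densities $|D_x^{\sigma/2} v_n|^2 + |v_n|^2$: vanishing is excluded since it would send the nonlinear terms to $0$ and contradict $v_n \in \mathcal{N}_c$ together with $m_c > 0$, while dichotomy is excluded by a strict subadditivity inequality for $m_c$ under a splitting of mass. Compactness up to translations then yields, after a subsequence and translation, a limit $\phi$ with $v_n \to \phi$ in $H^{\sigma/2}$, so that $\phi \in \mathcal{N}_c$ and $S_c(\phi) = m_c$.

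To see that the minimizer solves the equation I would use the Lagrange multiplier rule: there is $\mu \in \mathbb{R}$ with $S_c'(\phi) = \mu K_c'(\phi)$. Pairing with $\phi$ gives $0 = K_c(\phi) = \mu \langle K_c'(\phi), \phi\rangle$, and since a direct computation on $\mathcal{N}_c$ shows $\langle K_c'(\phi), \phi\rangle \neq 0$ (the Nehari manifold is a genuine $C^1$ constraint away from $0$, again using $2 \leq p < q$), we obtain $\mu = 0$ and hence $S_c'(\phi)=0$; that is, $\phi$ solves \eqref{eq:Intro_dSPc;gkdv_inst}. For the sign I would replace $\phi$ by $|\phi|$, using the fractional Pólya--Szegő inequality to get $\|D_x^{\sigma/2}|\phi|\|_{L^2} \leq \|D_x^{\sigma/2}\phi\|_{L^2}$, together with the parity structure of $F$ to ensure $\int_{\mathbb{R}} F(|\phi|)\,dx \geq \int_{\mathbb{R}} F(\phi)\,dx$ in cases (I) and (II)(i), respectively $\int_{\mathbb{R}} F(-|\phi|)\,dx \geq \int_{\mathbb{R}} F(\phi)\,dx$ in case (II)(ii); rescaling back onto $\mathcal{N}_c$ then shows the definite-sign competitor still attains $m_c$. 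Strict positivity (resp.\ negativity) follows from the positivity of the resolvent $(D_x^{\sigma} + c)^{-1}$ for $1 \leq \sigma \leq 2$.

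Finally the characterization \eqref{eq:Intro_GroundState_Characterization;gkdv_inst} follows from a two-sided comparison: every nontrivial solution $v$ of \eqref{eq:Intro_dSPc;gkdv_inst} satisfies $K_c(v) = \langle S_c'(v), v\rangle = 0$, hence $v \in \mathcal{N}_c$ and $m_c \leq S_c(v)$, while the minimizer $\phi$ constructed above is itself a solution attaining $m_c$; thus $m_c$ equals the infimum of $S_c$ over all nontrivial solutions. I expect the concentration--compactness step — specifically the strict subadditivity needed to rule out dichotomy — to be the main technical obstacle, closely followed by the sign bookkeeping, where the precise parity and signature conditions of the theorem are exactly what make the symmetrization argument succeed.
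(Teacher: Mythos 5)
First, a point of reference: the paper does not prove this theorem at all --- it is imported from \cite{Kokubu2024,kokubu2025stability} --- so your proposal can only be compared against the standard construction that the characterization \eqref{eq:Intro_GroundState_Characterization;gkdv_inst} presupposes. Your skeleton (minimize $S_{c}$ on $\mathcal{N}_{c}=\{v\neq 0:\,K_{c}(v)=0\}$, obtain compactness up to translations, kill the Lagrange multiplier, then symmetrize to get a signed minimizer) is exactly that construction, and its algebraic core does check out case by case: on $\mathcal{N}_{c}$ one has
\begin{equation}
    \langle K_{c}'(v),v\rangle
    = -(p-1)\bigl(\|D_{x}^{\sigma/2}v\|_{L^{2}}^{2}+c\|v\|_{L^{2}}^{2}\bigr)-(q-p)\int_{\mathbb{R}}v^{q+1}\,dx
    = -(q-1)\bigl(\|D_{x}^{\sigma/2}v\|_{L^{2}}^{2}+c\|v\|_{L^{2}}^{2}\bigr)+a(q-p)\int_{\mathbb{R}}v^{p+1}\,dx,
\end{equation}
and in each of the three sign/parity cases one of the two forms is strictly negative ($q$ odd makes $\int v^{q+1}\geq 0$; $a=-1$ with $p$ odd makes the second form negative), which is what both the multiplier step and the coercive rewriting of $S_{c}|_{\mathcal{N}_{c}}$ require. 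The symmetrization-plus-rescaling step also closes, provided one compares actions through the representation of $S_{c}|_{\mathcal{N}_{c}}$ whose coefficients are nonnegative in the given case (eliminate the $q$-monomial in cases (I), (II-ii), the $p$-monomial in case (II-i)), since the rescaling factor $\lambda_{\ast}\in(0,1]$ only decreases each term.

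Two steps, however, are genuine gaps as written. (1) You propose to exclude dichotomy by ``strict subadditivity of $m_{c}$ under a splitting of mass.'' That device belongs to $L^{2}$-constrained (Cazenave--Lions) minimization; here $m_{c}$ carries no mass parameter, and no such subadditivity statement is even formulated. The correct substitute for Nehari minimization is different: if a minimizing sequence splits into two widely separated pieces, then $K_{c}$ splits as well, at least one piece $v_{n}^{1}$ satisfies $\liminf_{n}K_{c}(v_{n}^{1})\leq 0$ and hence can be rescaled onto $\mathcal{N}_{c}$ by factors $\lambda_{n}\leq 1+o(1)$; the coercive representation of $S_{c}|_{\mathcal{N}_{c}}$ then gives $m_{c}\leq S_{c}(\lambda_{n}v_{n}^{1})\leq \theta\,m_{c}+o(1)$ with $\theta<1$ the fraction of the $H^{\sigma/2}$-norm carried by that piece, a contradiction. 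Without an argument of this type your compactness step is open, and it is the heart of the proof. (2) Your positivity claim ``from positivity of $(D_{x}^{\sigma}+c)^{-1}$'' works in case (I), where $\phi\geq 0$ gives $\phi^{p}+\phi^{q}\geq 0$, and in case (II-ii), where $\phi\leq 0$ gives $-\phi^{p}+\phi^{q}\leq 0$; but it fails as stated in case (II-i): there $\phi=G_{\sigma}\ast(-\phi^{p}+\phi^{q})$ and the source term $-\phi^{p}+\phi^{q}$ changes sign on $\{0\leq\phi<1\}$. You must either absorb the bad term into the operator, writing $\phi=(D_{x}^{\sigma}+c+\phi^{p-1})^{-1}\phi^{q}$ with $\phi^{p-1}\geq 0$ (this resolvent is still positivity preserving, e.g.\ by Feynman--Kac/subordination), or invoke a strong maximum principle for $D_{x}^{\sigma}$ (for $1\leq\sigma<2$, evaluate the equation at an interior zero of $\phi\geq 0$ and use the negativity of the nonlocal operator there). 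Both repairs are routine, but the step as proposed does not cover all three cases of the theorem.
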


\begin{rem}
    \begin{enumerate}
        \item Each ground state solution obtained in Theorem \ref{thm:Intro_dSPc_previous;gkdv_inst} can be taken as an even function, and be decreasing in $|x|$ if positive, or increasing in $|x|$ if negative.
        Hereafter, whenever we take a ground state solution to \eqref{eq:Intro_dSPc;gkdv_inst}, we always consider an even one.

        \item We can see that any solution $v \in H^{\sigma/2}(\mathbb{R})$ to \eqref{eq:Intro_dSPc;gkdv_inst} belongs to $H^{\infty}(\mathbb{R}) = \bigcap_{s \geq 0} H^{s}(\mathbb{R})$.
        For a proof of this, see e.g.\ \cite[Lemma 2.6]{Kokubu2024}.

        \item In Theorem \ref{thm:Intro_dSPc_previous;gkdv_inst}, we only consider cases that we can find a ground state solution, while there are other cases where we can find a nontrivial solution which is (possibly) not a ground state solution.
        For instance, under condition (II-2) in Theorem \ref{thm:Intro_dSPc_previous;gkdv_inst}, we can find a positive solution $v \in H^{\sigma/2}(\mathbb{R})$ to \eqref{eq:Intro_dSPc;gkdv_inst} which is not a ground state solution.
        Namely, it holds that $S_{c}(\phi_{c}) < S_{c}(v)$, where $\phi_{c}$ is a negative ground state solution to \eqref{eq:Intro_dSPc;gkdv_inst}.
        (More precisely, in this case, any positive solution $v \in H^{\sigma/2}(\mathbb{R})$ to \eqref{eq:Intro_dSPc;gkdv_inst} does not become a ground state solution.)
        For details, see \cite[Theorem 2.5]{Kokubu2024}.
    \end{enumerate}
\end{rem}

When it comes to other nonlinear dispersive equations such as nonlinear Schr\"{o}dinger equations, we usually consider power type nonlinearities of form $|u|^{p-1}u$ ($p > 1$). 
These nonlinearities have a symmetricity that if $\phi$ solves the corresponding stationary problem, then $-\phi$ also solves it.
However, similar symmetricity does not always hold for KdV type equations, whose nonlinearities are of form $u^{p}$ ($p \in \mathbb{N}$, $p \geq 2$).
Therefore, it can be said that the dependency of existence of the ground state solutions on the signatures and the parities of indices, like Theorem \ref{thm:Intro_dSPc_previous;gkdv_inst}, is a specific phenomenon to KdV type equation.
Due to Theorem \ref{thm:Intro_dSPc_previous;gkdv_inst}, the stability and instability of travelling wave solutions to \eqref{eq:Intro_gKdV_DoublePower;gkdv_inst} can vary depending on the signatures and the parities of indices, which is also seemed to be specific to KdV type equations.
The goal of this study is to give the classification of the stability and instability phenomena from the perspective of the parities and the signature.

In the author's previous paper~\cite{kokubu2025stability}, we classified conditions where travelling wave solutions to \eqref{eq:Intro_gKdV_DoublePower;gkdv_inst} constructed by ground state solutions to \eqref{eq:Intro_dSPc;gkdv_inst} obtained in Theorem \ref{thm:Intro_dSPc_previous;gkdv_inst} are stable, whose discussion was based on the method by Grillakis--Shatah--Strauss~\cite{GSS87}.
This paper is a sequel to \cite{kokubu2025stability} and deals with the instability of travelling wave solutions to \eqref{eq:Intro_gKdV_DoublePower;gkdv_inst}.
Now we state the main results of this paper: the classification of conditions for unstable travelling wave solutions.

\begin{thm} \label{thm:Intro_Main_Instability;gkdv_inst}
    Let $1 \leq \sigma \leq 2$, $p, q \in \mathbb{N}$, and $2 \leq p < q < \infty$.
    Then the following properties hold:

    \begin{enumerate}[label=\textup{(\Roman*) \ }]
        \item \underline{The case $a = +1$}

        Assume that $q$ is odd and let $\phi_{c}$ be a positive ground state solution to \eqref{eq:Intro_dSPc;gkdv_inst} for $c > 0$.

        \begin{enumerate}[label=\textup{(\arabic*) \ }]
            \item If $2\sigma + 1 \leq p < q$, then a travelling wave solution $\phi_{c}(x - ct)$ to \eqref{eq:Intro_gKdV_DoublePower;gkdv_inst} is unstable for all $c > 0$.

            \item If $p < 2\sigma + 1 < q$, then there exists $c_{1} \in (0, \infty)$ such that a travelling wave solution $\phi_{c}(x - ct)$ to \eqref{eq:Intro_gKdV_DoublePower;gkdv_inst} is unstable for $c \in (c_{1}, \infty)$.
        \end{enumerate}

        \item \underline{The case $a = -1$}

        \begin{enumerate}[label=\textup{(\arabic*)} \ ]
            \item Assume that $p$ is odd, and let $\phi_{c}$ be a positive ground state solution to \eqref{eq:Intro_dSPc;gkdv_inst} for $c > 0$.

            \begin{enumerate}[label=\textup{(\roman*)} \ ]
                \item If $p < q = 2\sigma + 1$, then a travelling wave solution $\phi_{c}(x - ct)$ to \eqref{eq:Intro_gKdV_DoublePower;gkdv_inst} is unstable for all $c > 0$.

                \item If $p \leq 2\sigma + 1 < q$, then a travelling wave solution $\phi_{c}(x - ct)$ to \eqref{eq:Intro_gKdV_DoublePower;gkdv_inst} is unstable for all $c > 0$.

                \item If $2 \sigma + 1 < p < q$, then there exists $c_{2} \in (0, \infty)$ such that a travelling wave solution $\phi_{c}(x - ct)$ to \eqref{eq:Intro_gKdV_DoublePower;gkdv_inst} is unstable for $c \in (c_{2}, \infty)$.
            \end{enumerate}

            \item Assume that $p$ is even and $q$ is odd, and let $\phi_{c}$ be a negative ground state solution to \eqref{eq:Intro_dSPc;gkdv_inst}.

            \begin{enumerate}[label=\textup{(\roman*)} \ ]
                \item If $2\sigma + 1 \leq p < q$, then a travelling wave solution $\phi_{c}(x - ct)$ to \eqref{eq:Intro_gKdV_DoublePower;gkdv_inst} is unstable for all $c > 0$.

                \item If $p < 2\sigma + 1 < q$, then there exists $c_{3} \in (0, \infty)$ such that a travelling wave solution $\phi_{c}(x - ct)$ to \eqref{eq:Intro_gKdV_DoublePower;gkdv_inst} is unstable for $c \in (c_{3}, \infty)$.
            \end{enumerate}
        \end{enumerate}
    \end{enumerate}
\end{thm}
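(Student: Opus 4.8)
The plan is to reduce, via the Grillakis--Shatah--Strauss framework already employed in the companion paper \cite{kokubu2025stability,GSS87} (and \cite{BSS1987} for the KdV-type setting), the instability of $\phi_{c}(x-ct)$ to the single scalar condition $d''(c) < 0$, where $d(c) \coloneq S_{c}(\phi_{c})$. Since $S_{c}'(\phi_{c}) = 0$ one has $d'(c) = M(\phi_{c})$, hence $d''(c) = \langle \phi_{c}, \partial_{c}\phi_{c}\rangle = -\langle L_{c}^{-1}\phi_{c}, \phi_{c}\rangle$, where $L_{c} \coloneq S_{c}''(\phi_{c}) = D_{x}^{\sigma} + c - ap\,\phi_{c}^{p-1} - q\,\phi_{c}^{q-1}$. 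To run this reduction I would invoke the spectral structure of $L_{c}$ for a ground state---that $L_{c}$ has exactly one negative eigenvalue and that $\ker L_{c} = \operatorname{span}\{\phi_{c}'\}$ (nondegeneracy)---which, through the standard constrained-operator lemma, yields both the hypothesis $n(L_{c}) = 1$ and the implication ``$d''(c) < 0 \Rightarrow$ instability''.

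Rather than computing $\langle L_{c}^{-1}\phi_{c}, \phi_{c}\rangle$ directly, I would exhibit a mass-orthogonal negative direction of $L_{c}$ using the mass-preserving scaling $\phi_{c}^{\lambda}(x) \coloneq \lambda\,\phi_{c}(\lambda^{2}x)$, for which $M(\phi_{c}^{\lambda}) = M(\phi_{c})$. Setting $g(\lambda) \coloneq S_{c}(\phi_{c}^{\lambda})$ and $\chi \coloneq \partial_{\lambda}\phi_{c}^{\lambda}\big|_{\lambda=1} = \phi_{c} + 2x\phi_{c}'$, an integration by parts gives $\langle \phi_{c}, \chi\rangle = 0$, while $S_{c}'(\phi_{c}) = 0$ gives $g''(1) = \langle L_{c}\chi, \chi\rangle$. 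Thus $g''(1) < 0$ produces a vector in $\{\phi_{c}\}^{\perp}$ on which the quadratic form of $L_{c}$ is negative, which forces $\langle L_{c}^{-1}\phi_{c}, \phi_{c}\rangle > 0$, i.e.\ $d''(c) < 0$, and hence instability.

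The crux is then to evaluate $g''(1)$ explicitly. Differentiating the power-scaling form of $g$ and eliminating $\|D_{x}^{\sigma/2}\phi_{c}\|_{L^{2}}^{2}$ between the Nehari identity $K_{c}(\phi_{c}) = 0$ and the dilation (Pohozaev) identity coming from $\frac{d}{d\lambda}S_{c}(\phi_{c}(\lambda\,\cdot))\big|_{\lambda=1} = 0$, I expect the clean formula
\[
g''(1) = a\,\frac{p-1}{p+1}(2\sigma+1-p)\int_{\mathbb{R}}\phi_{c}^{p+1}\,dx + \frac{q-1}{q+1}(2\sigma+1-q)\int_{\mathbb{R}}\phi_{c}^{q+1}\,dx.
\]
For every ``all $c$'' case (namely (I)(1), (II)(1)(i), (II)(1)(ii), (II)(2)(i)) the sign is now immediate from the parities and from $a$, keeping in mind that $\int_{\mathbb{R}}\phi_{c}^{p+1}\,dx < 0$ when the ground state is negative and $p$ is even: in each case both terms are $\le 0$ with at least one strict, so $g''(1) < 0$ for all $c > 0$. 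This is exactly where the sign $a$ intervenes, and it explains the contrast between (I)(2) (with $a = +1$, the subcritical term stabilizes, so instability is only expected for large $c$) and (II)(1)(ii) (with $a = -1$, the subcritical term fails to stabilize and instability persists for all $c$).

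In the remaining ``large $c$'' cases (I)(2), (II)(1)(iii), (II)(2)(ii), the two terms above carry opposite signs, so $g''(1)$ is not determined algebraically; here I would use the rescaling $\phi_{c}(x) = c^{1/(q-1)}W_{c}(c^{1/\sigma}x)$, under which \eqref{eq:Intro_dSPc;gkdv_inst} becomes $D_{x}^{\sigma}W_{c} + W_{c} - W_{c}^{q} - a\,c^{(p-q)/(q-1)}W_{c}^{p} = 0$, so that $W_{c}$ converges as $c \to \infty$ to the single-power ($q$) ground state. This forces $\int_{\mathbb{R}}\phi_{c}^{p+1}\,dx \big/ \int_{\mathbb{R}}\phi_{c}^{q+1}\,dx = O(c^{(p-q)/(q-1)}) \to 0$, so the strictly negative $q$-term dominates and $g''(1) < 0$ for all sufficiently large $c$, which provides the thresholds $c_{1}, c_{2}, c_{3}$. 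I expect the main obstacles to be the two non-algebraic inputs: first, verifying the spectral hypotheses $n(L_{c}) = 1$ and the nondegeneracy of $\phi_{c}$ (equivalently the $C^{1}$ dependence $c \mapsto \phi_{c}$) needed to legitimize the whole reduction, which for the fractional case $\sigma < 2$ is delicate and should be drawn from \cite{Frank-Lenzmann} and the companion paper; and second, making the $c \to \infty$ limit rigorous, i.e.\ establishing the compactness and convergence of $W_{c}$ together with the resulting uniform control of the ratio of the two nonlinear integrals.
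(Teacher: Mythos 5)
Your computational core is exactly the paper's: your mass-preserving scaling $\lambda\phi_{c}(\lambda^{2}x)$ is a reparametrization of the paper's $\lambda^{1/2}\phi_{c}(\lambda x)$, your formula for $g''(1)$ agrees (up to the harmless factor $4$ from the chain rule) with \eqref{eq:Main_0060;gkdv_inst}, your case-by-case sign analysis matches Lemmas \ref{lem:Main_SuffCondUnderI;gkdv_inst}--\ref{lem:Main_SuffCondUnderII2;gkdv_inst}, and your large-$c$ argument via convergence of the rescaled ground state to the single-power ground state is precisely Lemma \ref{lem:Main_ConvofGS;gkdv_inst}. The divergence is in how negativity of this scalar is converted into instability, and there your argument has a genuine gap.

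You route the conclusion through the Grillakis--Shatah--Strauss/Bona--Souganidis--Strauss reduction ``$d''(c)<0\Rightarrow$ instability.'' That reduction is only available under the spectral hypotheses you list: a $C^{1}$ branch $c\mapsto\phi_{c}$ and the nondegeneracy $\ker S_{c}''(\phi_{c})=\operatorname{span}\{\partial_{x}\phi_{c}\}$ (nondegeneracy is also what makes $L_{c}^{-1}\phi_{c}$, hence your identity $d''(c)=-\langle L_{c}^{-1}\phi_{c},\phi_{c}\rangle$ and your orthogonal-negative-direction argument, meaningful). Neither is known for the double-power problem \eqref{eq:Intro_dSPc;gkdv_inst}: the uniqueness/nondegeneracy theorem of \cite{Frank-Lenzmann} that you invoke covers only single-power nonlinearities, and nothing in the companion paper supplies it either; for $1\le\sigma<2$ this is an open problem, not a citation to be filled in (only the count $n(L_{c})\le 1$ follows softly, from the Nehari characterization as in Lemma \ref{lem:Variational_PositivenessofCoupling;gkdv_inst}). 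This is not a peripheral technicality --- it is the very obstruction the paper is built to avoid. Its Proposition \ref{prop:Instability_Sufficient;gkdv_inst} proves directly that your condition $g''(1)<0$ implies instability with no nondegeneracy, no uniqueness, and no smooth branch: the proof uses only the variational characterization \eqref{eq:Intro_GroundState_Characterization;gkdv_inst} of the ground state, modulation theory, a truncated virial functional built from the primitive of $\Lambda\phi_{c}$ (following \cite{Ohta_1995_DSinst,Riano-Roudenko_2022}), and the decay estimates of Section \ref{section:DecayEst_of_GS;gkdv_inst}, which are what make that virial functional well defined. To repair your proposal you would have to either prove nondegeneracy and construct the $C^{1}$ branch for the double-power fractional problem, or replace the GSS step by such a variational/virial argument.
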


\begin{rem}
    We can find only a few examples which satisfy conditions (II-1-i) or (II-2-i) in Theorem \ref{thm:Intro_Main_Instability;gkdv_inst}.
    For instance, $(\sigma, p, q) = (2, 3, 5), (3/2, 3, 4)$ are the only cases satisfying condition (II-1-i).
    For condition (II-2-i), we can find only one example that $(\sigma, p, q) = (3/2, 4, 5)$.
\end{rem}

The method of \cite{BSS1987} is hardly applicable to nonlinear dispersive equations with generalized nonlinearities, including double power ones, due to lack of scaling properties of ground state solutions like \eqref{eq:Intro_0010;gkdv_inst}, which makes it difficult to compute $\partial_{c}^{2} d(c)$.
Therefore, we need to focus on individual equations to analyze the stability and instability of travelling wave solutions to \eqref{eq:Intro_gKdV_general;gkdv_inst}.

The organization of this paper is as follows.
In Section \ref{section:Instability_Sufficient;gkdv_inst}, we introduce a sufficient condition for travelling wave solutions to \eqref{eq:Intro_gKdV_DoublePower;gkdv_inst} to be unstable (Proposition \ref{prop:Instability_Sufficient;gkdv_inst}).
The proof of Proposition \ref{prop:Instability_Sufficient;gkdv_inst} is based on the variational characterization \eqref{eq:Intro_GroundState_Characterization;gkdv_inst} of ground state solutions to \eqref{eq:Intro_dSPc;gkdv_inst}.
In Section \ref{section:Proof_of_Theorem;gkdv_inst}, we observe conditions which satisfy the sufficient condition given in Proposition \ref{prop:Instability_Sufficient;gkdv_inst} to conclude Theorem \ref{thm:Intro_Main_Instability;gkdv_inst}.

\subsection*{Notations}
\begin{itemize}
    \item For a function $u$, both $\mathscr{F} u$ and $\hat{u}$ denote the Fourier transform of $u$ defined as
    \begin{equation}
        \mathscr{F}u(\xi) = \hat{u}(\xi) \coloneq \frac{1}{\sqrt{2\pi}} \int_{\mathbb{R}} e^{-ix\xi} u(x) \, dx.
    \end{equation}

    \item For $s > 0$ and $u, v \in H^{s}(\mathbb{R})$, we define an inner product in $H^{s}(\mathbb{R})$ as
    \begin{equation}
        (u, v)_{H^{s}} \coloneq (D_{x}^{s}u, D_{x}^{s}v)_{L^{2}} + (u, v)_{L^{2}}.
    \end{equation}
    Let $\| \cdot \|_{H^{\sigma/2}}$ denote the norm induced by this inner product.

    \item For $x \in \mathbb{R}$, we put $\langle x \rangle \coloneq (1 + x^{2})^{1/2}$.

    \item We let $\langle f, u \rangle$ denote a dual product between $f \in H^{-\sigma/2}(\mathbb{R})$ and $u \in H^{\sigma/2}(\mathbb{R})$ if there is no confusion.
    In section \ref{section:Instability_Sufficient;gkdv_inst}, when we should clarify the sense of dual products, we write $\langle f, u \rangle_{X^{\ast}, X}$ for $f \in X^{\ast}$ and $u \in X$, where $X$ is a Banach space.

    \item In estimates, we often use the same letter to denote positive constants, whose values may change from line to line.
\end{itemize}

\section{Sufficient condition for unstable travelling wave solutions} \label{section:Instability_Sufficient;gkdv_inst}

In this section, we always assume one of conditions (I), (II-1), or (II-2) in Theorem \ref{thm:Intro_dSPc_previous;gkdv_inst}, and let $\phi_{c}$ be a ground state solution to \eqref{eq:Intro_dSPc;gkdv_inst} for $c > 0$ obtained in Theorem \ref{thm:Intro_dSPc_previous;gkdv_inst}.

We state the sufficient condition for travelling wave solutions to be unstable.
\begin{prop} \label{prop:Instability_Sufficient;gkdv_inst}
    For $\lambda > 0$, we put
    \begin{equation}
        \phi_{c}^{\lambda}(x) \coloneq \lambda^{1/2} \phi_{c}(\lambda x). \label{eq:Instability_0010;gkdv_inst}
    \end{equation}
    If $\partial_{\lambda}^{2}S_{c}(\phi_{c}^{\lambda}) |_{\lambda=1} < 0$, then a travelling wave solution $\phi_{c}(x - ct)$ to \eqref{eq:Intro_gKdV_DoublePower;gkdv_inst} is unstable.
\end{prop}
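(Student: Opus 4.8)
The plan is to argue by contradiction, using $S_{c}$ as a conserved Lyapunov functional together with a localized virial functional adapted to the $L^{2}$-preserving scaling \eqref{eq:Instability_0010;gkdv_inst}, in the spirit of the instability arguments of Grillakis--Shatah--Strauss and Shatah--Strauss. First I would record the variational meaning of the hypothesis. Writing $\psi \coloneq \partial_{\lambda}\phi_{c}^{\lambda}|_{\lambda=1} = \tfrac{1}{2}\phi_{c} + x\partial_{x}\phi_{c}$ and using $S_{c}'(\phi_{c})=0$, differentiating twice gives
\[
    \partial_{\lambda}^{2}S_{c}(\phi_{c}^{\lambda})\big|_{\lambda=1} = \langle S_{c}''(\phi_{c})\psi,\psi\rangle .
\]
Since the scaling preserves the $L^{2}$-norm we have $M(\phi_{c}^{\lambda})=M(\phi_{c})$ for all $\lambda$, whence $\langle M'(\phi_{c}),\psi\rangle = \partial_{\lambda}M(\phi_{c}^{\lambda})|_{\lambda=1}=0$; that is, $\psi$ lies in the tangent space to the mass constraint at $\phi_{c}$. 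Thus the hypothesis exhibits a direction, \emph{inside} the constant-mass manifold, along which the Hessian $L\coloneq S_{c}''(\phi_{c})$ is strictly negative. Because $\phi_{c}$ is a ground state characterized by \eqref{eq:Intro_GroundState_Characterization;gkdv_inst}, the self-adjoint operator $L$ has the standard spectral picture of a Nehari minimizer: exactly one negative eigenvalue and one-dimensional kernel spanned by $\partial_{x}\phi_{c}$. Combined with the negative direction $\psi$, this is precisely the configuration needed to run an instability argument.

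Next I would produce the perturbed data and set up the contradiction. Since $\partial_{\lambda}^{2}S_{c}(\phi_{c}^{\lambda})|_{\lambda=1}<0$, the point $\lambda=1$ is a strict local maximum of $\lambda\mapsto S_{c}(\phi_{c}^{\lambda})$, so for $\lambda$ close to $1$ the data $u_{0}=\phi_{c}^{\lambda}$ satisfy $u_{0}\in U_{\delta}(\phi_{c})$ with $\delta\to0$ as $\lambda\to1$, together with $M(u_{0})=M(\phi_{c})$ and $S_{c}(u_{0})<d(c)\coloneq S_{c}(\phi_{c})$. Assuming, for contradiction, that $\phi_{c}(x-ct)$ is stable, the corresponding solution $u(t)$ stays in $U_{\varepsilon}(\phi_{c})$ for all $t\ge0$ and, by the conservation laws of the Assumption, keeps $M(u(t))=M(\phi_{c})$ and $S_{c}(u(t))=S_{c}(u_{0})\le d(c)-\eta$ for some $\eta>0$.

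The heart of the proof is a monotonicity functional. Working in the travelling frame, where \eqref{eq:Intro_gKdV_DoublePower;gkdv_inst} takes the Hamiltonian form $\partial_{t}v = \partial_{\xi}S_{c}'(v)$, I would introduce a localized virial functional of the form $\mathcal{A}_{R}(v)=\tfrac{1}{2}\int_{\mathbb{R}}\xi\,\theta(\xi/R)\,v^{2}\,d\xi$ and compute
\[
    \frac{d}{dt}\mathcal{A}_{R}(v(t)) = -\,\langle (1+\xi\partial_{\xi})v,\,S_{c}'(v)\rangle + (\text{truncation error}),
\]
so that the leading term vanishes at $v=\phi_{c}$ and its behaviour near $\phi_{c}$ is governed by $\partial_{\lambda}^{2}S_{c}(\phi_{c}^{\lambda})|_{\lambda=1}$ (indeed, evaluated along the scaling curve it equals $-\tfrac{1}{2}K_{c}(\phi_{c}^{\lambda})-\lambda\,\partial_{\lambda}S_{c}(\phi_{c}^{\lambda})$). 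The crucial step is a \emph{coercivity lemma}: there is $\kappa>0$ such that for every $v$ in the tube with $M(v)=M(\phi_{c})$ and $S_{c}(v)\le d(c)-\eta$ the quantity $\langle(1+\xi\partial_{\xi})v,S_{c}'(v)\rangle$ is bounded away from zero with a fixed sign. This I would establish by modulating out the translation symmetry, Taylor expanding $S_{c}$ and the virial derivative around $\phi_{c}$, and using the negativity $\langle L\psi,\psi\rangle<0$ of the constrained Hessian together with its positivity on the complementary directions (Morse index one). Granting this, $\mathcal{A}_{R}(v(t))$ has a time-derivative of fixed sign bounded away from zero for all $t$, hence is unbounded, whereas $|\mathcal{A}_{R}(v(t))|$ is bounded uniformly on $U_{\varepsilon}(\phi_{c})$ once $R$ is fixed --- a contradiction, which proves instability.

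I expect the main obstacle to be twofold, both concentrated in the last paragraph. First, because $D_{x}^{\sigma}$ is nonlocal (and the solitary waves decay only polynomially when $\sigma<2$), the virial functional must be truncated, and one must control the commutator $[D_{x}^{\sigma},\,\xi\,\theta(\xi/R)]$ and the resulting error terms uniformly, so that the differential identity above holds with negligible remainder; this is where the fractional dispersion makes the classical KdV virial computation genuinely delicate. Second, and more essentially, the coercivity lemma must convert the \emph{static}, one-dimensional information $\partial_{\lambda}^{2}S_{c}(\phi_{c}^{\lambda})|_{\lambda=1}<0$ into a \emph{uniform dynamical} lower bound valid throughout the constant-mass, sub-threshold slice of the tube; this is where the ground-state characterization \eqref{eq:Intro_GroundState_Characterization;gkdv_inst} and the non-degeneracy of $L$ are indispensable, and where I expect most of the technical work to lie.
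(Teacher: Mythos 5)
Your overall architecture --- contradiction with stability, scaled data $\phi_{c}^{\lambda}$ with $S_{c}(\phi_{c}^{\lambda})<S_{c}(\phi_{c})$, a truncated virial functional whose time derivative must simultaneously stay bounded and be bounded away from zero --- is exactly the paper's skeleton. But your proof of the crucial coercivity step rests on an ingredient that is not available here: you invoke ``the standard spectral picture of a Nehari minimizer: exactly one negative eigenvalue and one\mbox{-}dimensional kernel spanned by $\partial_{x}\phi_{c}$,'' and then use this non-degeneracy and Morse-index information to convert $\langle S_{c}''(\phi_{c})\Lambda\phi_{c},\Lambda\phi_{c}\rangle<0$ into a uniform lower bound on the virial derivative. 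The Morse-index-one part is harmless (it follows from Lemmas \ref{lem:Variational_PositivenessofCoupling;gkdv_inst} and \ref{lem:Instability_UnstableDir_1;gkdv_inst}), but non-degeneracy of the linearized operator is \emph{not known} for double-power nonlinearities with fractional dispersion: the Frank--Lenzmann kernel classification covers only single powers, and for \eqref{eq:Intro_dSPc;gkdv_inst} even uniqueness of the ground state is open. The paper is engineered precisely to avoid this: its key estimate (Proposition \ref{prop:Instability_Lyapunov_EstofPA;gkdv_inst}, $S_{c}(\phi_{c})\leq S_{c}(v)+\mu_{3}|P_{A_{3}}(v)|$ on the tube) is proved purely variationally --- one slides $v$ along the direction $\theta_{A}(v)$ until it hits the Nehari manifold $\{K_{c}=0\}$ (Lemma \ref{lem:Instability_Lyapunov_ConstructionofNehari0;gkdv_inst}, which needs only $\langle K_{c}'(\phi_{c}),\Lambda\phi_{c}\rangle\neq 0$, a consequence of Lemma \ref{lem:Variational_PositivenessofCoupling;gkdv_inst}), and then the global minimality \eqref{eq:Intro_GroundState_Characterization;gkdv_inst} of $\phi_{c}$ on that manifold does the work that your spectral decomposition was meant to do. Replacing your coercivity lemma by this Nehari-sliding argument is the missing idea; as written, your route stalls at an unproven (and likely very hard) spectral hypothesis.

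A secondary, but real, technical gap: your virial functional $\mathcal{A}_{R}(v)=\tfrac{1}{2}\int\xi\,\theta(\xi/R)v^{2}\,d\xi$ is centered at the origin, while the tube $U_{\varepsilon}(\phi_{c})$ is translation-invariant, so under the stability assumption the solution may drift arbitrarily far while staying in the tube; the truncation errors (and the commutator with $D_{x}^{\sigma}$, aggravated by the merely polynomial decay when $\sigma<2$) are then not uniform in time for any fixed $R$. The paper's functional $J_{A}(v)=\int v(x+\tilde{z}(v))\Phi_{A}(x)\,dx$ builds the modulation parameter $\tilde{z}(v)$ of Lemma \ref{lem:Instability_Modulation_Modulation;gkdv_inst} into the functional itself (and is linear in $v$, with kernel the primitive of $\Lambda\phi_{c}$), which is what makes both the uniform bound $|J_{A}|\leq CA^{1/2}$ and the clean identity $\frac{d}{dt}J_{A}(u(t))=-P_{A}(u(t))$ hold on the whole tube. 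You would need to recentre $\mathcal{A}_{R}$ at $\tilde{z}(u(t))$ and re-derive the derivative identity with the extra $\frac{d}{dt}\tilde z$ terms, which is exactly the bookkeeping the paper carries out.
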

We remark that the condition $\partial_{\lambda}^{2}S_{c}(\phi_{c}^{\lambda}) |_{\lambda=1} < 0$ was introduced by Ohta~\cite{Ohta_1995_DSinst}.

In the following, we prove Proposition \ref{prop:Instability_Sufficient;gkdv_inst}.
The proof is based on the discussion in \cite{Riano-Roudenko_2022} with some modifications.

First we observe some properties of ground state solutions to \eqref{eq:Intro_dSPc;gkdv_inst}.
Here we state some lemmas which hold for the ground state solutions.

\begin{lem} \label{lem:Variational_PositivenessofCoupling;gkdv_inst}
    If $v \in H^{\sigma/2}(\mathbb{R})$ satisfies $\langle K_{c}'(\phi_{c}), v \rangle = 0$, then it holds that
    \begin{equation}
        \langle S_{c}''(\phi_{c})v, v \rangle \geq 0.
    \end{equation}
\end{lem}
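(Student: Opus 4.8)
The plan is to read the inequality as the second-order necessary condition for the constrained minimization \eqref{eq:Intro_GroundState_Characterization;gkdv_inst}: since $\phi_c$ minimizes $S_c$ on the Nehari manifold $\mathcal{N} = \{v \in H^{\sigma/2}(\mathbb{R})\setminus\{0\} : K_c(v) = 0\}$, the Hessian $S_c''(\phi_c)$ must be nonnegative on the tangent space $\{v : \langle K_c'(\phi_c), v\rangle = 0\}$, which is exactly the assertion. Recall that $\phi_c$ being a ground state gives both $S_c'(\phi_c) = 0$ (so $\phi_c$ solves \eqref{eq:Intro_dSPc;gkdv_inst}) and $K_c(\phi_c) = \langle S_c'(\phi_c),\phi_c\rangle = 0$, i.e. $\phi_c \in \mathcal{N}$.

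First I would establish the non-degeneracy of the constraint at $\phi_c$, namely $\langle K_c'(\phi_c),\phi_c\rangle \neq 0$; in fact I will show it is strictly negative. Differentiating $K_c(v) = \langle S_c'(v),v\rangle$ and using $S_c'(\phi_c)=0$ gives the identity $\langle K_c'(\phi_c),\phi_c\rangle = \langle S_c''(\phi_c)\phi_c,\phi_c\rangle = \left.\frac{d^2}{dt^2}\right|_{t=1} S_c(t\phi_c)$. Since $\frac{d}{dt}S_c(t\phi_c) = t^{-1}K_c(t\phi_c)$ vanishes at $t=1$ and, by the variational structure underlying \eqref{eq:Intro_GroundState_Characterization;gkdv_inst}, $t=1$ is the strict maximizer of $t\mapsto S_c(t\phi_c)$ on $(0,\infty)$, the second derivative at $t=1$ is negative. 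Concretely, writing $K_c(t\phi_c) = t^2(\|D_x^{\sigma/2}\phi_c\|_{L^2}^2 + c\|\phi_c\|_{L^2}^2) - a t^{p+1}\int_{\mathbb{R}}\phi_c^{p+1}\,dx - t^{q+1}\int_{\mathbb{R}}\phi_c^{q+1}\,dx$ and using $K_c(\phi_c)=0$ yields $\langle K_c'(\phi_c),\phi_c\rangle = -(p-1)a\int_{\mathbb{R}}\phi_c^{p+1}\,dx - (q-1)\int_{\mathbb{R}}\phi_c^{q+1}\,dx$; a case check on the parities and the sign of $\phi_c$ from Theorem \ref{thm:Intro_dSPc_previous;gkdv_inst} settles the sign directly in cases (I) and (II-2), while in case (II-1) I would rely on the ray-maximality above.

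Next, given any $v$ with $\langle K_c'(\phi_c),v\rangle = 0$, I would build a twice-differentiable curve on $\mathcal{N}$ through $\phi_c$ with velocity $v$. Consider $\Phi(s,\beta) = K_c(\phi_c + sv + \beta\phi_c)$; since $\Phi(0,0)=0$ and $\partial_\beta\Phi(0,0) = \langle K_c'(\phi_c),\phi_c\rangle \neq 0$, the implicit function theorem (applicable because the polynomial nonlinearities make $S_c$, hence $K_c$, smooth enough) yields $\beta(s)$ with $\beta(0)=0$ and $K_c(\gamma(s))=0$ for $\gamma(s) = \phi_c + sv + \beta(s)\phi_c$. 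Differentiating the constraint at $s=0$ gives $0 = \langle K_c'(\phi_c),v\rangle + \beta'(0)\langle K_c'(\phi_c),\phi_c\rangle$, so $\beta'(0)=0$ and therefore $\gamma'(0)=v$; moreover $\gamma(s)\neq 0$ for small $s$ by continuity, so $\gamma(s)\in\mathcal{N}$. Setting $g(s) = S_c(\gamma(s))$, the minimality of $\phi_c$ over $\mathcal{N}$ forces a local minimum at $s=0$, whence $g''(0)\geq 0$; expanding and using $S_c'(\phi_c)=0$ together with $\gamma'(0)=v$ gives $g''(0) = \langle S_c''(\phi_c)v,v\rangle + \langle S_c'(\phi_c),\gamma''(0)\rangle = \langle S_c''(\phi_c)v,v\rangle$, which is the claim.

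The main obstacle is the non-degeneracy $\langle K_c'(\phi_c),\phi_c\rangle \neq 0$: without it the constraint manifold need not be smooth at $\phi_c$ and the curve construction breaks down. The only genuinely delicate instance is case (II-1), where the defocusing lower-order term makes the direct sign computation inconclusive, so one must appeal to the maximality of $t=1$ along the ray $t\mapsto S_c(t\phi_c)$ coming from the Nehari characterization.
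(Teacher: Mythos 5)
Your overall strategy is sound, and it is essentially the argument behind the paper's own treatment: the paper gives no proof of this lemma at all, citing instead Ohta (2014, Lemma~4), and what you reconstruct---the second-order necessary condition on the Nehari manifold via the implicit function theorem---is the standard proof of exactly such statements. The curve construction itself is correct: $K_{c}$ is a polynomial functional on $H^{\sigma/2}(\mathbb{R})$ (smooth, since $H^{\sigma/2}(\mathbb{R}) \hookrightarrow L^{r}(\mathbb{R})$ for all finite $r \geq 2$ when $\sigma \geq 1$), so once $\langle K_{c}'(\phi_{c}), \phi_{c} \rangle \neq 0$ is known, the implicit function theorem, the computation $\beta'(0) = 0$, $\gamma'(0) = v$, and the conclusion $g''(0) = \langle S_{c}''(\phi_{c})v, v \rangle \geq 0$ are all in order. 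Note that the non-degeneracy you need is precisely the statement $\langle S_{c}''(\phi_{c})\phi_{c}, \phi_{c} \rangle = \langle K_{c}'(\phi_{c}), \phi_{c} \rangle < 0$ of Lemma~\ref{lem:Instability_UnstableDir_1;gkdv_inst} of the paper.

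The genuine gap is your treatment of case (II-1). There you justify non-degeneracy by asserting that, ``by the variational structure underlying \eqref{eq:Intro_GroundState_Characterization;gkdv_inst}, $t=1$ is the strict maximizer of $t \mapsto S_{c}(t\phi_{c})$.'' That implication does not hold as stated: minimality of $S_{c}$ over the Nehari manifold does \emph{not} by itself force the fibering map $h(t) = S_{c}(t\phi_{c})$ to have a maximum at $t=1$. Indeed, if $t=1$ were a nondegenerate local minimum of $h$, then---since $h$ increases near $t = 0$ and $h(t) \rightarrow -\infty$ as $t \rightarrow \infty$ (because $\int_{\mathbb{R}} \phi_{c}^{q+1}\,dx > 0$)---$h$ would have local maxima at some $t_{\ast} \in (0,1)$ and $t^{\ast} \in (1,\infty)$; the points $t_{\ast}\phi_{c}$ and $t^{\ast}\phi_{c}$ lie on the Nehari manifold with action strictly \emph{larger} than $S_{c}(\phi_{c})$, which is perfectly compatible with $\phi_{c}$ being the minimizer. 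So the characterization alone cannot rule out $h''(1) \geq 0$, and your fallback is unjustified. Fortunately the gap closes with the computation you already set up, using only the identity $K_{c}(\phi_{c}) = 0$: in case (II-1) ($a = -1$, $p$ odd, $\phi_{c} > 0$) it reads
\begin{equation}
    \int_{\mathbb{R}} \phi_{c}^{q+1}\,dx = \| D_{x}^{\sigma/2}\phi_{c} \|_{L^{2}}^{2} + c \| \phi_{c} \|_{L^{2}}^{2} + \int_{\mathbb{R}} \phi_{c}^{p+1}\,dx > \int_{\mathbb{R}} \phi_{c}^{p+1}\,dx > 0,
\end{equation}
and hence, since $1 < p < q$,
\begin{equation}
    \langle K_{c}'(\phi_{c}), \phi_{c} \rangle = (p-1)\int_{\mathbb{R}} \phi_{c}^{p+1}\,dx - (q-1)\int_{\mathbb{R}} \phi_{c}^{q+1}\,dx < (q-1)\left( \int_{\mathbb{R}} \phi_{c}^{p+1}\,dx - \int_{\mathbb{R}} \phi_{c}^{q+1}\,dx \right) < 0.
\end{equation}
With this replacement, all three cases (I), (II-1), (II-2) follow from the same direct sign computation plus the Nehari identity, and no appeal to ray-maximality is needed anywhere.
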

\begin{proof}
    See \cite[Lemma 4]{Ohta_2014}.
\end{proof}

\begin{lem} \label{lem:Instability_UnstableDir_1;gkdv_inst}
    It holds that $\langle S_{c}''(\phi_{c}) \phi_{c}, \phi_{c} \rangle < 0$.
\end{lem}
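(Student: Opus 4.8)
The plan is to evaluate $\langle S_{c}''(\phi_{c})\phi_{c}, \phi_{c}\rangle$ in closed form and then read off its sign from the parity and sign data recorded in Theorem \ref{thm:Intro_dSPc_previous;gkdv_inst}. Since the nonlinearity is $f(s) = as^{p} + s^{q}$, the second Fréchet derivative acts by $\langle S_{c}''(v)h, k\rangle = (D_{x}^{\sigma/2}h, D_{x}^{\sigma/2}k)_{L^{2}} + c(h,k)_{L^{2}} - \int_{\mathbb{R}} f'(v)hk\,dx$, so putting $v = h = k = \phi_{c}$ and writing $P = \int_{\mathbb{R}}\phi_{c}^{p+1}\,dx$, $Q = \int_{\mathbb{R}}\phi_{c}^{q+1}\,dx$ gives $\langle S_{c}''(\phi_{c})\phi_{c}, \phi_{c}\rangle = \|D_{x}^{\sigma/2}\phi_{c}\|_{L^{2}}^{2} + c\|\phi_{c}\|_{L^{2}}^{2} - apP - qQ$. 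Because $\phi_{c}$ solves \eqref{eq:Intro_dSPc;gkdv_inst}, the Nehari identity $K_{c}(\phi_{c}) = 0$ reads $\|D_{x}^{\sigma/2}\phi_{c}\|_{L^{2}}^{2} + c\|\phi_{c}\|_{L^{2}}^{2} = aP + Q$; substituting this to eliminate the quadratic part reduces the claim to establishing the inequality
\[
    \langle S_{c}''(\phi_{c})\phi_{c}, \phi_{c}\rangle = -(p-1)\,aP - (q-1)\,Q < 0 .
\]

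The sign analysis then splits along the cases of Theorem \ref{thm:Intro_dSPc_previous;gkdv_inst}. First I would observe that $Q > 0$ always: either $q$ is odd (so $q+1$ is even and the integrand is nonnegative), or $\phi_{c}$ has a fixed sign, in which case $\phi_{c}^{q+1}$ does too. In case (I) ($a = +1$, $\phi_{c} > 0$) one has $aP = P > 0$, and in case (II-2) ($a = -1$, $\phi_{c} < 0$, $p$ even) the power $p+1$ is odd so $P < 0$ and again $aP = -P > 0$. In both of these cases the two terms $-(p-1)aP$ and $-(q-1)Q$ are strictly negative because $p \geq 2$ and $q > p$, so the inequality holds immediately.

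The main obstacle is case (II-1), where $a = -1$ and $\phi_{c} > 0$: now $aP = -P < 0$, the term $-(p-1)aP = (p-1)P$ is positive, and an additional relation is needed. To produce it I would exploit the $L^{2}$-invariant scaling \eqref{eq:Instability_0010;gkdv_inst}: since $\|\phi_{c}^{\lambda}\|_{L^{2}}$ is independent of $\lambda$ and $\phi_{c}$ is a critical point of $S_{c}$, the scalar map $\lambda \mapsto S_{c}(\phi_{c}^{\lambda})$ is stationary at $\lambda = 1$. Evaluating
\[
    S_{c}(\phi_{c}^{\lambda}) = \tfrac{1}{2}\lambda^{\sigma}\|D_{x}^{\sigma/2}\phi_{c}\|_{L^{2}}^{2} - \tfrac{a}{p+1}\lambda^{(p-1)/2}P - \tfrac{1}{q+1}\lambda^{(q-1)/2}Q + \tfrac{c}{2}\|\phi_{c}\|_{L^{2}}^{2}
\]
and imposing $\partial_{\lambda}S_{c}(\phi_{c}^{\lambda})|_{\lambda=1} = 0$ yields the Pohozaev-type identity $\sigma\|D_{x}^{\sigma/2}\phi_{c}\|_{L^{2}}^{2} = \tfrac{a(p-1)}{p+1}P + \tfrac{q-1}{q+1}Q$. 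In case (II-1) the left-hand side is strictly positive while $aP < 0$, which forces $\tfrac{q-1}{q+1}Q > \tfrac{p-1}{p+1}P$; multiplying by $q+1$ and using $q+1 > p+1$ together with $P > 0$ gives $(q-1)Q > (p-1)P$, and hence $-(p-1)aP - (q-1)Q = (p-1)P - (q-1)Q < 0$, as required. The only delicate point in this last step is justifying the chain rule $\partial_{\lambda}S_{c}(\phi_{c}^{\lambda})|_{\lambda=1} = \langle S_{c}'(\phi_{c}), \partial_{\lambda}\phi_{c}^{\lambda}|_{\lambda=1}\rangle = 0$, which rests on $\lambda \mapsto \phi_{c}^{\lambda}$ being a $C^{1}$ curve in $H^{\sigma/2}(\mathbb{R})$; this is ensured by the smoothness and decay of $\phi_{c}$ (it lies in $H^{\infty}(\mathbb{R})$), but it can also be bypassed simply by differentiating the explicit scalar expression for $S_{c}(\phi_{c}^{\lambda})$ displayed above.
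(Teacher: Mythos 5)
Your proof is correct, but it takes a genuinely different route from the paper's. The paper disposes of the lemma in two lines by a fibering-map argument: it considers the scalar-multiplication map $(0,\infty)\ni\lambda\mapsto S_{c}(\lambda\phi_{c})$, notes that its first derivative at $\lambda=1$ is $\langle S_{c}'(\phi_{c}),\phi_{c}\rangle=K_{c}(\phi_{c})=0$, and then appeals to the shape (``convexity'') of this one-variable polynomial to conclude that $\lambda=1$ is a strict local maximum, i.e.\ $\partial_{\lambda}^{2}S_{c}(\lambda\phi_{c})|_{\lambda=1}<0$. Your computation makes explicit what that appeal leaves implicit: with $P=\int\phi_{c}^{p+1}\,dx$ and $Q=\int\phi_{c}^{q+1}\,dx$, the Nehari identity reduces the quantity to $-a(p-1)P-(q-1)Q$, which is manifestly negative in cases (I) and (II-2), while in case (II-1) the term $-a(p-1)P=(p-1)P$ is positive and genuinely needs extra input --- this is exactly the case where the paper's shape argument is nontrivial, since there the derivative factor $A+P\lambda^{p-1}-Q\lambda^{q-1}$ of the fibering map is not monotone and one must check that it is strictly decreasing at its unique positive zero. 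Your resolution of case (II-1) via the dilation (Pohozaev-type) identity $\sigma\|D_{x}^{\sigma/2}\phi_{c}\|_{L^{2}}^{2}=\frac{a(p-1)}{p+1}P+\frac{q-1}{q+1}Q$ --- which is precisely the identity \eqref{eq:Main_0040;gkdv_inst} that the paper itself derives in Section \ref{section:Proof_of_Theorem;gkdv_inst} --- is clean and correct, as is the elementary step from $\frac{q-1}{q+1}Q>\frac{p-1}{p+1}P$ to $(q-1)Q>(p-1)P$ using $q+1>p+1$ and $P>0$. In short: the paper's argument is shorter and uses only the scalar scaling $\lambda\phi_{c}$, but hides a case-dependent analysis; yours is longer but fully explicit, isolates exactly where the difficulty sits, and recycles an identity the paper needs anyway. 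One small caveat: your closing remark that the chain rule can be ``bypassed by differentiating the explicit scalar expression'' is not quite right --- the explicit differentiation only produces the formula for $\partial_{\lambda}S_{c}(\phi_{c}^{\lambda})|_{\lambda=1}$; to know that it \emph{vanishes} you still need $\langle S_{c}'(\phi_{c}),\Lambda\phi_{c}\rangle=0$, i.e.\ the chain rule together with criticality (or a direct computation testing equation \eqref{eq:Intro_dSPc;gkdv_inst} against $\tfrac12\phi_{c}+x\partial_{x}\phi_{c}$). Your first justification, via the $C^{1}$ curve $\lambda\mapsto\phi_{c}^{\lambda}$ and the regularity and decay of $\phi_{c}$, is the right one and matches what the paper does at \eqref{eq:Main_0040;gkdv_inst}.
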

\begin{proof}
    Considering the graph of the function $(0, \infty) \ni  \lambda \mapsto S_{c}(\lambda \phi_{c}) \in \mathbb{R}$, we can see that
    \begin{equation}
        \partial_{\lambda} S_{c}(\lambda \phi_{c}) |_{\lambda=1} = \langle S_{c}'(\phi_{c}), \phi_{c} \rangle = 0.
    \end{equation}
    By the convexity of this function, we obtain
    \begin{equation}
        \langle S_{c}''(\phi_{c}) \phi_{c}, \phi_{c} \rangle = \partial_{\lambda}^{2} S_{c}(\lambda \phi_{c}) |_{\lambda=1} < 0.
    \end{equation}
\end{proof}

Next we observe the decay estimate of ground state solutions to \eqref{eq:Intro_dSPc;gkdv_inst}.

When $\sigma = 2$, it is well known that a ground state solution $\phi_{c}$ to \eqref{eq:Intro_dSPc;gkdv_inst} decays exponentially, that is,
\begin{equation}
    |\phi_{c}(x)| + |\partial_{x} \phi_{c}(x)| \leq C e^{-\delta |x|} \label{eq:Instability_Preliminaries_0010;gkdv_inst}
\end{equation}
for $x \in \mathbb{R}$ with some constants $C, \delta > 0$.

If $1 \leq \sigma < 2$, we can see that a ground state solution $\phi_{c}$ to \eqref{eq:Intro_dSPc;gkdv_inst} decays as polynomially as follows;
for any $l \in \mathbb{Z}_{+}$, it holds that
\begin{equation}
    |x^{l} \partial_{x}^{l} \phi_{c}(x)| \leq C \langle x \rangle^{-(1 + \sigma)} \label{eq:Instability_Preliminaries_0020;gkdv_inst}
\end{equation}
for $x \in \mathbb{R}$ with some constant $C > 0$.
The similar decay estimate for the ground state solutions to stationary problems with single power nonlinearity has been proved by Ria\~{n}o--Roudenko~\cite{Riano-Roudenko_2022}.
Following their discussion, we can prove the decay estimate \eqref{eq:Instability_Preliminaries_0020;gkdv_inst}.
For readers' convenience, we will give the proof of \eqref{eq:Instability_Preliminaries_0020;gkdv_inst} in Section \ref{section:DecayEst_of_GS;gkdv_inst}.

Now we put
\begin{equation}
    \Lambda \phi_{c} \coloneq \partial_{\lambda} \phi_{c}^{\lambda} |_{\lambda=1} = \frac{1}{2}\phi_{c} + x \partial_{x}\phi_{c}. \label{eq:Instability_UnstableDir_0010;gkdv_inst}
\end{equation}
By \eqref{eq:Instability_Preliminaries_0010;gkdv_inst} or \eqref{eq:Instability_Preliminaries_0020;gkdv_inst}, we can see that $\Lambda \phi_{c} \in H^{1 + \sigma/2}(\mathbb{R}) \cap L^{1}(\mathbb{R})$.

\begin{lem}
    It holds that
    \begin{align}
        & (\phi_{c}, \Lambda \phi_{c})_{L^{2}} = 0, \label{eq:Instability_UnstableDir_0020;gkdv_inst} \\
        & \langle S_{c}''(\phi_{c}) \Lambda \phi_{c}, \Lambda \phi_{c} \rangle < 0. \label{eq:Instability_UnstableDir_0030;gkdv_inst}
    \end{align}
\end{lem}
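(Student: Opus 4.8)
The plan is to read both identities off the one-parameter family $\phi_{c}^{\lambda}$ defined in \eqref{eq:Instability_0010;gkdv_inst}, exploiting that this particular scaling is $L^{2}$-invariant and that $\phi_{c}$ is a critical point of $S_{c}$. For \eqref{eq:Instability_UnstableDir_0020;gkdv_inst}, I would first observe that the change of variables $y = \lambda x$ gives $\| \phi_{c}^{\lambda} \|_{L^{2}}^{2} = \| \phi_{c} \|_{L^{2}}^{2}$ for every $\lambda > 0$, so that $M(\phi_{c}^{\lambda})$ is constant in $\lambda$. Differentiating at $\lambda = 1$, recalling that $\langle M'(v), w \rangle = (v, w)_{L^{2}}$ and that $\partial_{\lambda} \phi_{c}^{\lambda}|_{\lambda=1} = \Lambda \phi_{c}$ by \eqref{eq:Instability_UnstableDir_0010;gkdv_inst}, yields $0 = \partial_{\lambda} M(\phi_{c}^{\lambda})|_{\lambda=1} = (\phi_{c}, \Lambda \phi_{c})_{L^{2}}$. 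Equivalently, one can argue by hand: writing $\Lambda \phi_{c} = \tfrac{1}{2}\phi_{c} + x \partial_{x}\phi_{c}$ and integrating by parts,
\[
    \int_{\mathbb{R}} x \phi_{c} \partial_{x}\phi_{c} \, dx = \frac{1}{2} \int_{\mathbb{R}} x \partial_{x}(\phi_{c}^{2}) \, dx = -\frac{1}{2} \| \phi_{c} \|_{L^{2}}^{2},
\]
where the boundary terms vanish by the decay estimate \eqref{eq:Instability_Preliminaries_0010;gkdv_inst} or \eqref{eq:Instability_Preliminaries_0020;gkdv_inst}, so that $(\phi_{c}, \Lambda \phi_{c})_{L^{2}} = \tfrac{1}{2}\| \phi_{c} \|_{L^{2}}^{2} - \tfrac{1}{2}\| \phi_{c} \|_{L^{2}}^{2} = 0$.

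For \eqref{eq:Instability_UnstableDir_0030;gkdv_inst}, the key is to relate the scalar function $g(\lambda) \coloneq S_{c}(\phi_{c}^{\lambda})$ to the Hessian of $S_{c}$. Since $S_{c} \in C^{2}(H^{\sigma/2}(\mathbb{R}), \mathbb{R})$ and $\lambda \mapsto \phi_{c}^{\lambda}$ is twice differentiable into $H^{\sigma/2}(\mathbb{R})$, the chain rule gives
\[
    g''(\lambda) = \langle S_{c}''(\phi_{c}^{\lambda}) \partial_{\lambda}\phi_{c}^{\lambda}, \partial_{\lambda}\phi_{c}^{\lambda} \rangle + \langle S_{c}'(\phi_{c}^{\lambda}), \partial_{\lambda}^{2}\phi_{c}^{\lambda} \rangle.
\]
Evaluating at $\lambda = 1$ and using that $\phi_{c}$ solves \eqref{eq:Intro_dSPc;gkdv_inst}, so that $S_{c}'(\phi_{c}) = 0$ annihilates the second term, I obtain the identity $\partial_{\lambda}^{2} S_{c}(\phi_{c}^{\lambda})|_{\lambda=1} = \langle S_{c}''(\phi_{c}) \Lambda \phi_{c}, \Lambda \phi_{c} \rangle$. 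The left-hand side is negative by the hypothesis of Proposition \ref{prop:Instability_Sufficient;gkdv_inst}, which is in force throughout its proof, and \eqref{eq:Instability_UnstableDir_0030;gkdv_inst} follows.

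It is worth stressing why this route, rather than an appeal to the earlier spectral lemmas, is the correct one. Since $\langle K_{c}'(\phi_{c}), v \rangle = \langle S_{c}''(\phi_{c}) \phi_{c}, v \rangle$, Lemma \ref{lem:Variational_PositivenessofCoupling;gkdv_inst} applied to $v = \Lambda \phi_{c}$ would \emph{force} $\langle S_{c}''(\phi_{c}) \Lambda \phi_{c}, \Lambda \phi_{c} \rangle \geq 0$ if $\langle K_{c}'(\phi_{c}), \Lambda \phi_{c} \rangle = 0$ held; hence $\Lambda \phi_{c}$ is necessarily \emph{not} orthogonal to $\phi_{c}$ with respect to the quadratic form $\langle S_{c}''(\phi_{c}) \cdot, \cdot \rangle$, and the strict negativity in \eqref{eq:Instability_UnstableDir_0030;gkdv_inst} cannot be extracted from Lemmas \ref{lem:Variational_PositivenessofCoupling;gkdv_inst} and \ref{lem:Instability_UnstableDir_1;gkdv_inst} alone. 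It is genuinely supplied by the standing hypothesis of the proposition.

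The one genuinely technical point, and the step I expect to require the most care, is justifying the differentiations above: namely that $\lambda \mapsto \phi_{c}^{\lambda}$ is a $C^{2}$ curve in $H^{\sigma/2}(\mathbb{R})$ and that the pairing $\langle S_{c}'(\phi_{c}^{\lambda}), \partial_{\lambda}^{2}\phi_{c}^{\lambda} \rangle$ is well defined before it is set to zero. This is exactly where the membership $\Lambda \phi_{c} \in H^{1 + \sigma/2}(\mathbb{R}) \cap L^{1}(\mathbb{R})$ recorded just before the lemma, and more generally the decay estimates \eqref{eq:Instability_Preliminaries_0010;gkdv_inst} and \eqref{eq:Instability_Preliminaries_0020;gkdv_inst}, enter: they guarantee that $\partial_{\lambda}\phi_{c}^{\lambda}$ and $\partial_{\lambda}^{2}\phi_{c}^{\lambda}$ remain in the energy space for $\lambda$ near $1$, so that $g$ is indeed $C^{2}$ and the substitution $S_{c}'(\phi_{c}) = 0$ is legitimate.
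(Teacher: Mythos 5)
Your proposal is correct and follows essentially the same route as the paper: identity \eqref{eq:Instability_UnstableDir_0020;gkdv_inst} comes from differentiating the $L^{2}$-invariant scaling $\lambda \mapsto M(\phi_{c}^{\lambda})$ at $\lambda = 1$, and \eqref{eq:Instability_UnstableDir_0030;gkdv_inst} from the identity $\langle S_{c}''(\phi_{c}) \Lambda \phi_{c}, \Lambda \phi_{c} \rangle = \partial_{\lambda}^{2} S_{c}(\phi_{c}^{\lambda})|_{\lambda=1}$, valid because $S_{c}'(\phi_{c}) = 0$ kills the extra chain-rule term, combined with the standing hypothesis of Proposition \ref{prop:Instability_Sufficient;gkdv_inst}. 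Your explicit justification of the chain rule (via the decay estimates ensuring $\partial_{\lambda}^{2}\phi_{c}^{\lambda}|_{\lambda=1} \in H^{\sigma/2}(\mathbb{R})$) and your observation that the negativity genuinely requires the proposition's hypothesis rather than Lemmas \ref{lem:Variational_PositivenessofCoupling;gkdv_inst}--\ref{lem:Instability_UnstableDir_1;gkdv_inst} are correct elaborations of steps the paper leaves implicit.
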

\begin{proof}
    Since the scaling \eqref{eq:Instability_0010;gkdv_inst} is $L^{2}$-norm invariant, we have
    \begin{equation}
        0 = \partial_{\lambda} M(\phi_{c}^{\lambda}) |_{\lambda=1} = \langle M'(\phi_{c}), \Lambda \phi_{c} \rangle = (\phi_{c}, \Lambda \phi_{c})_{L^{2}}.
    \end{equation}
    
    Moreover, since $S_{c}'(\phi_{c}) = 0$ in $H^{-\sigma/2}(\mathbb{R})$, we obtain
    \begin{equation}
        \langle S_{c}''(\phi_{c}) \Lambda \phi_{c}, \Lambda \phi_{c} \rangle = \partial_{\lambda}^{2} S_{c}(\phi_{c}^{\lambda}) |_{\lambda = 1} < 0.
    \end{equation}
\end{proof}

Here we recall the modulation theory around a ground state solution $\phi_{c}$ to \eqref{eq:Intro_dSPc;gkdv_inst}.

\begin{lem} \label{lem:Instability_Modulation_Modulation;gkdv_inst}
    There exist $\varepsilon_{1} > 0$ and a unique $C^{1}$-mapping $\tilde{z}\colon U_{\varepsilon_{1}}(\phi_{c}) \rightarrow \mathbb{R}$ which satisfy $\tilde{z}(\phi_{c}) = 0$ and the following properties for all $u \in U_{\varepsilon_{1}}(\phi_{c})$ and all $y \in \mathbb{R}$:
    
    \begin{enumerate}[label=\textup{(\roman*)} \ ]
        \item $\left( u(\cdot + \tilde{z}(u)), \partial_{x} \phi_{c} \right)_{L^{2}} = 0$,

        \item $\tilde{z}\left( u (\cdot + y) \right) = \tilde{z}(u) - y$,

        \item $\displaystyle \tilde{z}'(u) = \frac{\partial_{x} \phi_{c}(\cdot - \tilde{z}(u))}{\left( u(\cdot + \tilde{z}(u)), \partial_{x}^{2} \phi_{c} \right)_{L^{2}}} \in H^{\infty}(\mathbb{R})$.
    \end{enumerate}
\end{lem}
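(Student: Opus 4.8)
The plan is to establish this as a standard implicit function theorem argument applied to a functional that encodes the orthogonality condition (i). First I would define the map
\[
\mathcal{G}\colon H^{\sigma/2}(\mathbb{R}) \times \mathbb{R} \rightarrow \mathbb{R}, \quad \mathcal{G}(u, y) \coloneq \left( u(\cdot + y), \partial_{x} \phi_{c} \right)_{L^{2}},
\]
and observe that $\mathcal{G}(\phi_{c}, 0) = (\phi_{c}, \partial_{x}\phi_{c})_{L^{2}} = 0$, since $\phi_{c}$ is even and hence $\phi_{c} \partial_{x}\phi_{c}$ is odd. The goal is to solve $\mathcal{G}(u, y) = 0$ for $y = \tilde{z}(u)$ near $(\phi_{c}, 0)$. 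To apply the implicit function theorem I would check that $\mathcal{G}$ is $C^{1}$ near $(\phi_{c}, 0)$: the $y$-derivative is $\partial_{y}\mathcal{G}(u, y) = \left( u(\cdot + y), \partial_{x}^{2}\phi_{c}\right)_{L^{2}}$ (after an integration by parts moving the $\partial_{x}$ from the translate onto $\partial_x\phi_c$, using that $\partial_x\phi_c \in H^\infty(\mathbb{R})$ by Remark/regularity), and the $u$-derivative is the bounded linear functional $w \mapsto (w(\cdot + y), \partial_x\phi_c)_{L^2}$.

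The key nondegeneracy check is that $\partial_{y}\mathcal{G}(\phi_{c}, 0) = \left( \phi_{c}, \partial_{x}^{2}\phi_{c}\right)_{L^{2}} = -\|\partial_{x}\phi_{c}\|_{L^{2}}^{2} \neq 0$, where the last equality follows by integration by parts and $\phi_c \not\equiv 0$. Since this partial derivative is nonzero, the implicit function theorem yields $\varepsilon_{1} > 0$ and a unique $C^{1}$-map $\tilde{z}\colon U_{\varepsilon_{1}}(\phi_{c}) \rightarrow \mathbb{R}$ with $\tilde{z}(\phi_{c}) = 0$ satisfying $\mathcal{G}(u, \tilde{z}(u)) = 0$, which is exactly property (i). (Strictly, the implicit function theorem produces a map on a ball around $\phi_c$ in $H^{\sigma/2}$; I would then note this extends to a neighborhood of the group orbit, i.e.\ to $U_{\varepsilon_1}(\phi_c)$, by shrinking $\varepsilon_1$ and using the translation covariance built into the construction.)

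Next I would verify the translation identity (ii). Fix $u \in U_{\varepsilon_{1}}(\phi_{c})$ and $y \in \mathbb{R}$. By definition $\tilde{z}(u(\cdot + y))$ is the unique value $z$ near $0$ with $\left( u(\cdot + y + z), \partial_{x}\phi_{c}\right)_{L^{2}} = 0$; but $z = \tilde{z}(u) - y$ makes the argument $\tilde{z}(u)$, which solves (i) for $u$, so by uniqueness $\tilde{z}(u(\cdot + y)) = \tilde{z}(u) - y$ (on the set of $u$ for which both sides lie in the domain). Finally, for the formula (iii), I would differentiate the identity $\left( u(\cdot + \tilde{z}(u)), \partial_{x}\phi_{c}\right)_{L^{2}} = 0$ in $u$ and solve for $\tilde{z}'(u)$: applying the chain rule to the $u$-dependence appearing both explicitly and through $\tilde{z}(u)$ gives, for any direction $w$,
\[
\left( w(\cdot + \tilde{z}(u)), \partial_{x}\phi_{c}\right)_{L^{2}} + \langle \tilde{z}'(u), w \rangle \left( u(\cdot + \tilde{z}(u)), \partial_{x}^{2}\phi_{c}\right)_{L^{2}} = 0,
\]
and after translating the first inner product back by $-\tilde{z}(u)$ and moving the derivative, I obtain the stated kernel $\tilde{z}'(u) = \partial_{x}\phi_{c}(\cdot - \tilde{z}(u)) / \left( u(\cdot + \tilde{z}(u)), \partial_{x}^{2}\phi_{c}\right)_{L^{2}}$; the membership $\tilde{z}'(u) \in H^{\infty}(\mathbb{R})$ follows because $\partial_{x}\phi_{c} \in H^{\infty}(\mathbb{R})$ and the denominator is a nonzero scalar (it is close to $-\|\partial_x\phi_c\|_{L^2}^2$ for $u$ near $\phi_c$, after shrinking $\varepsilon_1$). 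The main obstacle is the transfer from the bare implicit-function-theorem neighborhood to the full orbit neighborhood $U_{\varepsilon_1}(\phi_c)$ and confirming well-definedness of $\tilde z$ there; the regularity and nondegeneracy computations themselves are routine once the evenness of $\phi_c$ and $\partial_x\phi_c \in H^\infty$ are invoked.
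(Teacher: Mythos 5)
Your approach is, in substance, the same as the paper's: the paper does not prove this lemma itself but simply cites Bona--Souganidis--Strauss~\cite[Theorem 4.1]{BSS1987}, and the proof of that cited theorem is exactly the implicit function theorem argument you outline, including the passage from the local IFT ball around $\phi_{c}$ to the orbit neighborhood $U_{\varepsilon_{1}}(\phi_{c})$ via translation covariance, and the uniqueness-based proof of (ii).

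One concrete slip should be repaired: the sign of $\partial_{y}\mathcal{G}$. Integration by parts gives
\[
\partial_{y}\mathcal{G}(u, y) = \int_{\mathbb{R}} \partial_{x}\left\{ u(x + y) \right\} \partial_{x}\phi_{c}(x) \, dx = - \left( u(\cdot + y), \partial_{x}^{2}\phi_{c} \right)_{L^{2}},
\]
not $+\left( u(\cdot + y), \partial_{x}^{2}\phi_{c} \right)_{L^{2}}$; in particular $\partial_{y}\mathcal{G}(\phi_{c}, 0) = + \| \partial_{x}\phi_{c} \|_{L^{2}}^{2}$. This does not affect the nondegeneracy check (the quantity is nonzero either way, so the IFT applies), but it makes your derivation of (iii) internally inconsistent: from your displayed identity
\[
\left( w(\cdot + \tilde{z}(u)), \partial_{x}\phi_{c} \right)_{L^{2}} + \langle \tilde{z}'(u), w \rangle \left( u(\cdot + \tilde{z}(u)), \partial_{x}^{2}\phi_{c} \right)_{L^{2}} = 0
\]
one would obtain $\tilde{z}'(u) = - \partial_{x}\phi_{c}(\cdot - \tilde{z}(u)) \big/ \left( u(\cdot + \tilde{z}(u)), \partial_{x}^{2}\phi_{c} \right)_{L^{2}}$, the opposite of the formula you then claim. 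With the correct sign the chain-rule identity reads
\[
\left( w(\cdot + \tilde{z}(u)), \partial_{x}\phi_{c} \right)_{L^{2}} - \langle \tilde{z}'(u), w \rangle \left( u(\cdot + \tilde{z}(u)), \partial_{x}^{2}\phi_{c} \right)_{L^{2}} = 0,
\]
and, using $\left( w(\cdot + \tilde{z}(u)), \partial_{x}\phi_{c} \right)_{L^{2}} = \left( w, \partial_{x}\phi_{c}(\cdot - \tilde{z}(u)) \right)_{L^{2}}$, this yields exactly the stated (iii). So your final formula agrees with the lemma, but only after this sign is fixed; as written, the intermediate computation contradicts the conclusion.
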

\begin{proof}
    See Bona--Souganidis--Strauss~\cite[Theorem 4.1]{BSS1987}.
\end{proof}

Now we let $I \subset \mathbb{R}$ be an interval and $u(t) \in C(I, H^{\sigma/2}(\mathbb{R}))$ be a solution to \eqref{eq:Intro_gKdV_DoublePower;gkdv_inst}.
Then we see that $\partial_{t}u(t) \in C(I, H^{-(1 + \sigma/2)}(\mathbb{R}))$.
We additionally assume that $u(t) \in U_{\varepsilon}(\phi_{c})$ for all $t \in I$ and all $\varepsilon \in (0, \varepsilon_{1})$.
By Lemma 4.6 of \cite{GSS87}, we obtain
\begin{align}
    \frac{d}{dt}z(t) &= \left\langle \partial_{t}u(t), \tilde{z}'(u(t)) \right\rangle_{H^{-(1 + \sigma/ 2)}, H^{1 + \sigma/2}} \\
    &= \left\langle \partial_{x}E'(u(t)), \tilde{z}'(u(t)) \right\rangle_{H^{-(1 + \sigma/2)}, H^{1 + \sigma/2}} \\
    &= - \left\langle E'(u(t)), \partial_{x}\tilde{z}'(u(t)) \right\rangle_{H^{-\sigma/2}, H^{\sigma/2}}. \label{eq:Instability_Modulation_0010;gkdv_inst}
\end{align}

Here we introduce a virial type functional.
First we define
\begin{equation}
    \Phi(x) \coloneq \int_{-\infty}^{x} \Lambda \phi_{c}(y) \, dy
\end{equation}
for $x \in \mathbb{R}$.
Since $\Lambda \phi_{c} \in L^{1}(\mathbb{R})$, we see that $\Phi$ is bounded in $\mathbb{R}$.
Next, we introduce a function $\rho \in C_{c}^{\infty}(\mathbb{R})$ satisfying that $0 \leq \rho(x) \leq 1$ for all $x \in \mathbb{R}$, $\rho(x) = 1$ for $|x| \leq 1$, and $\rho(x) = 0$ for $|x| \geq 2$, and put $\rho_{A}(x) \coloneq \rho(x/A)$ for $A \geq 1$.
Now we define
\begin{equation}
    \Phi_{A}(x) \coloneq \Phi(x)\rho_{A}(x)
\end{equation}
for $x \in \mathbb{R}$ and $A \geq 1$.
We can see that $\Phi_{A} \in H^{\infty}(\mathbb{R})$.
Using this function, we define a virial type functional $J_{A}$ as
\begin{equation}
    J_{A}(v) \coloneq \int_{\mathbb{R}} v(x + \tilde{z}(v)) \Phi_{A}(x) \, dx.
\end{equation}

\begin{lem} \label{lem:Instability_Lyapunov_BoundednessofJ;gkdv_inst}
    There exists $C > 0$ such that
    \begin{equation}
        |J_{A}(v)| \leq C A^{1/2}
    \end{equation}
    holds for all $A \geq 1$ and $v \in U_{\varepsilon_{1}}(\phi_{c})$.
\end{lem}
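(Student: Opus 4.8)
The plan is to bound $J_{A}$ by a single application of the Cauchy--Schwarz inequality, isolating the two places where $v$ and $A$ enter. Writing $J_{A}(v) = \int_{\mathbb{R}} v(x + \tilde{z}(v))\, \Phi_{A}(x)\, dx$, Cauchy--Schwarz gives $|J_{A}(v)| \leq \|v(\cdot + \tilde{z}(v))\|_{L^{2}} \, \|\Phi_{A}\|_{L^{2}}$. I would control the first factor uniformly over the tube, and let the entire $A$-dependence come from the second.

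For the first factor, translation invariance of the $L^{2}$ norm yields $\|v(\cdot + \tilde{z}(v))\|_{L^{2}} = \|v\|_{L^{2}}$. Since $v \in U_{\varepsilon_{1}}(\phi_{c})$, there exists $y \in \mathbb{R}$ with $\|v - \phi_{c}(\cdot - y)\|_{H^{\sigma/2}} < \sqrt{\varepsilon_{1}}$, so that $\|v\|_{L^{2}} \leq \|v\|_{H^{\sigma/2}} \leq \|\phi_{c}\|_{H^{\sigma/2}} + \sqrt{\varepsilon_{1}}$, which is a bound independent of both $v$ and $A$.

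For the second factor, the essential input is that $\Phi$ is bounded on $\mathbb{R}$, as already recorded above from $\Lambda \phi_{c} \in L^{1}(\mathbb{R})$; set $M \coloneq \|\Phi\|_{L^{\infty}}$. Because $\Phi_{A} = \Phi \, \rho_{A}$ with $0 \leq \rho_{A} \leq 1$ supported in $\{|x| \leq 2A\}$, I obtain $\|\Phi_{A}\|_{L^{2}}^{2} \leq M^{2} \int_{|x| \leq 2A} dx = 4 M^{2} A$, hence $\|\Phi_{A}\|_{L^{2}} \leq 2 M A^{1/2}$. Combining the two factors gives $|J_{A}(v)| \leq C A^{1/2}$ with $C \coloneq 2 M (\|\phi_{c}\|_{H^{\sigma/2}} + \sqrt{\varepsilon_{1}})$, valid for all $A \geq 1$ (indeed for all $A > 0$) and all $v \in U_{\varepsilon_{1}}(\phi_{c})$.

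Since every step is elementary, I do not expect a genuine obstacle here; the only points worth care are that the uniform $L^{2}$ bound on $v$ holds across the entire tube $U_{\varepsilon_{1}}(\phi_{c})$, and that the growth rate $A^{1/2}$ is sharp precisely because $\Phi$ is merely bounded rather than integrable or decaying at $+\infty$ — this is what rules out any better estimate and is exactly the power that will be needed in the subsequent virial argument.
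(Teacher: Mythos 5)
Your proof is correct and follows essentially the same route as the paper's: both combine the uniform bound on $\| v \|_{L^{2}}$ over the tube $U_{\varepsilon_{1}}(\phi_{c})$, the boundedness of $\Phi$, and the fact that $\rho_{A}$ is supported in $\{|x| \leq 2A\}$, with Cauchy--Schwarz producing the factor $A^{1/2}$ from the length of that support. The only (immaterial) difference is that you place $\Phi_{A}$ in $L^{2}$ before applying Cauchy--Schwarz, whereas the paper first bounds $\Phi \rho_{A}$ in $L^{\infty}$ and then applies Cauchy--Schwarz on the interval $[-2A, 2A]$.
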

\begin{proof}
    First, we remark that there exists some $M > 0$ such that $\| v \|_{H^{\sigma/2}} \leq M$ holds for all $v \in U_{\varepsilon_{1}}(\phi_{c})$.
    Then, a direct calculation yields that
    \begin{align}
        |J_{A}(v)| & = \left| \int_{-2A}^{2A} v(x + \tilde{z}(v)) \Phi(x) \rho\left( \frac{x}{A} \right) \, dx \right| \\
        &\leq C \int_{-2A}^{2A} |v(x + \tilde{z}(v))| \, dx \\
        &\leq C A^{1/2} \| v \|_{L^{2}} \\
        &\leq CM A^{1/2}.
    \end{align}
    Therefore, replacing $CM$ with $C$ concludes the proof.
\end{proof}

Here we put
\begin{equation}
    F_{A}(v) \coloneq \langle \partial_{x} v(\cdot + \tilde{z}(v)), \Phi_{A} \rangle_{H^{-(1 - \sigma/2)}, H^{1 - \sigma/2}} =  - \int_{\mathbb{R}} v(x) \partial_{x}\Phi_{A}(x - \tilde{z}(v)) \, dx \label{eq:Instability_Lyapunov_0001;gkdv_inst}
\end{equation} 
for $v \in U_{\varepsilon_{1}}(\phi_{c})$.
Since $\Phi_{A} \in H^{\infty}(\mathbb{R})$ and $u(t) \in C^{1}(I, H^{-(1 + \sigma/2)}(\mathbb{R}))$, we see it by \eqref{eq:Instability_Modulation_0010;gkdv_inst} that
\begin{align}
    \frac{d}{dt} J_{A}(u(t)) &= \langle \partial_{t}u(t, \cdot + z(t)), \Phi_{A} \rangle_{H^{-(1 + \sigma/2)}, H^{1 + \sigma/2}} + \frac{d}{dt}z(t) F_{A}(u(t)) \\
    &= \langle \partial_{x}E'(u(t, \cdot + z(t))), \Phi_{A} \rangle_{H^{-(1 + \sigma/2)}, H^{1 + \sigma/2}} - F_{A}(u(t)) \langle E'(u(t)), \partial_{x}\tilde{z}'(u(t)) \rangle_{H^{-\sigma/2}, H^{\sigma/2}} \\
    &= - \langle E'(u(t)), \partial_{x} \Phi_{A}(\cdot - z(t)) + F_{A}(u(t)) \partial_{x} \tilde{z}'(u(t)) \rangle_{H^{-\sigma/2}, H^{\sigma/2}}
\end{align}
holds for $t \in I$.
Putting $\theta_{A}(v) \coloneq \partial_{x} \Phi_{A}(\cdot - \tilde{z}(v)) + F_{A}(v) \partial_{x} \tilde{z}'(v)$, we simply write
\begin{equation}
    \frac{d}{dt} J_{A}(u(t)) = - \left\langle E'(u(t)), \theta_{A}(u(t)) \right\rangle_{H^{-\sigma/2}, H^{\sigma/2}}. \label{eq:Instability_Lyapunov_0010;gkdv_inst}
\end{equation}
Moreover, since $(v, \partial_{x}\tilde{z}'(v))_{L^{2}} = 1$ holds for $v \in U_{\varepsilon_{1}}(\phi_{c})$ by (iii) of Lemma \ref{lem:Instability_Modulation_Modulation;gkdv_inst}, we obtain
\begin{align}
    \left\langle M'(v), \theta_{A}(v) \right\rangle_{H^{-\sigma/2}, H^{\sigma/2}} &= (v, \theta_{A}(v))_{L^{2}} \\
    &= -F_{A}(v) + F_{A}(v) (v, \partial_{x} \tilde{z}'(v))_{L^{2}} = 0.
\end{align}
Using this, we can rewrite \eqref{eq:Instability_Lyapunov_0010;gkdv_inst} as
\begin{equation}
    \frac{d}{dt} J_{A}(u(t)) = - \langle S_{c}'(u(t)), \theta_{A}(u(t)) \rangle_{H^{-\sigma/2}, H^{\sigma/2}}.
\end{equation}
Now we put
\begin{equation}
    P_{A}(v) \coloneq \langle S_{c}'(v), \theta_{A}(v) \rangle_{H^{-\sigma/2}, H^{\sigma/2}}
\end{equation}
for $v \in U_{\varepsilon_{1}}(\phi_{c})$.
We will show the following proposition to obtain the instability of travelling wave solutions.

\begin{prop} \label{prop:Instability_Lyapunov_EstofPA;gkdv_inst}
    There exist $A_{3} \geq 1$, $\mu_{3} > 0$, and $\varepsilon_{3} \in (0, \varepsilon_{1})$ such that
    \begin{equation}
        S_{c}(\phi_{c}) \leq S_{c}(v) + \mu_{3} |P_{A_{3}}(v)|
    \end{equation}
    holds for all $v \in U_{\varepsilon_{3}}(\phi_{c})$.
\end{prop}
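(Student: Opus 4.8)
The plan is to combine Taylor expansions of $S_c$ and $P_A$ around $\phi_c$ with a coercivity estimate for the Hessian $S_c''(\phi_c)$, whose unique negative direction is exactly the one that $P_A$ detects. First I would invoke the modulation map of Lemma \ref{lem:Instability_Modulation_Modulation;gkdv_inst}. Both $S_c$ and $P_A$ are invariant under spatial translations (for $P_A$ this follows from the covariance $\tilde z(u(\cdot + y)) = \tilde z(u) - y$ together with the explicit formula for $\tilde z'$), so for $v \in U_{\varepsilon_3}(\phi_c)$ I may replace $v$ by $\tilde v \coloneq v(\cdot + \tilde z(v))$, for which $\tilde z(\tilde v) = 0$. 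Setting $w \coloneq \tilde v - \phi_c$, the orthogonality $(w, \partial_x \phi_c)_{L^2} = 0$ holds and $\|w\|_{H^{\sigma/2}}$ is controlled by $\varepsilon_3$. Using $S_c'(\phi_c) = 0$ and $S_c \in C^2$,
\[ S_c(v) - S_c(\phi_c) = \tfrac{1}{2} \langle S_c''(\phi_c) w, w \rangle + o(\|w\|_{H^{\sigma/2}}^2). \]
Likewise $P_A(\phi_c) = \langle S_c'(\phi_c), \theta_A(\phi_c) \rangle = 0$, and since $S_c'(\phi_c) = 0$ annihilates the term involving $\theta_A'(\phi_c)$, I obtain $P_A(v) = \langle S_c''(\phi_c) w, \theta_A(\phi_c) \rangle + O(\|w\|^2)$. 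Because $\partial_x \Phi_A \to \Lambda \phi_c$ in $H^{\sigma/2}$ and $F_A(\phi_c) = -(\phi_c, \partial_x \Phi_A)_{L^2} \to -(\phi_c, \Lambda \phi_c)_{L^2} = 0$ as $A \to \infty$ by \eqref{eq:Instability_UnstableDir_0020;gkdv_inst}, we have $\theta_A(\phi_c) \to \Lambda \phi_c$; hence $P_A(v) = \langle S_c''(\phi_c) \Lambda \phi_c, w \rangle + O(\varepsilon_A \|w\| + \|w\|^2)$ with $\varepsilon_A \coloneq \|\theta_A(\phi_c) - \Lambda \phi_c\|_{H^{\sigma/2}} \to 0$.

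The heart of the argument is the coercivity estimate: there exists $\delta > 0$ such that
\[ \langle S_c''(\phi_c) w, w \rangle \geq \delta \|w\|_{H^{\sigma/2}}^2 - \delta^{-1} \langle S_c''(\phi_c) \Lambda \phi_c, w \rangle^2 \]
for every $w$ with $(w, \partial_x \phi_c)_{L^2} = 0$. To establish it I would record the spectral structure of $L \coloneq S_c''(\phi_c) = D_x^\sigma + c - f'(\phi_c)$: its essential spectrum is $[c, \infty)$ since $f'(\phi_c) \to 0$ at infinity, its kernel is spanned by $\partial_x \phi_c$, and by Lemma \ref{lem:Variational_PositivenessofCoupling;gkdv_inst} (which forces nonnegativity on a codimension-one subspace, hence Morse index at most one) together with $\langle S_c''(\phi_c) \Lambda \phi_c, \Lambda \phi_c \rangle < 0$ from \eqref{eq:Instability_UnstableDir_0030;gkdv_inst}, $L$ has Morse index exactly one. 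On $V \coloneq \{w : (w, \partial_x \phi_c)_{L^2} = 0\}$ the operator $L$ is therefore invertible with a single negative direction. The estimate then follows from the standard coercivity-modulo-one-constraint lemma applied with $\zeta \coloneq L \Lambda \phi_c$: since $\Lambda \phi_c$ is even while $\partial_x \phi_c$ is odd, $\Lambda \phi_c \in V$ and $L_V^{-1} \zeta = \Lambda \phi_c$, so the sign condition ensuring that the constraint cuts the negative direction is exactly $(L_V^{-1} \zeta, \zeta)_{L^2} = \langle S_c''(\phi_c) \Lambda \phi_c, \Lambda \phi_c \rangle < 0$, which is \eqref{eq:Instability_UnstableDir_0030;gkdv_inst}.

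Finally I would combine the three ingredients. Writing $\beta \coloneq \langle S_c''(\phi_c) \Lambda \phi_c, w \rangle$ and inserting the coercivity bound into the expansion of $S_c$ gives $S_c(v) - S_c(\phi_c) \geq \tfrac{\delta}{2} \|w\|^2 - \tfrac{1}{2\delta} \beta^2 + o(\|w\|^2)$. Since $\beta = P_A(v) + O(\varepsilon_A \|w\| + \|w\|^2)$, the term $\beta^2$ is bounded by $2 P_A(v)^2 + C \varepsilon_A^2 \|w\|^2 + C \|w\|^4$. Choosing $A_3$ so large that the $\varepsilon_{A_3}^2 \|w\|^2$ contribution is absorbed, and then $\varepsilon_3$ so small that the remaining $\|w\|^4$ and $o(\|w\|^2)$ terms are absorbed, I obtain $S_c(v) - S_c(\phi_c) \geq c_0 \|w\|^2 - \delta^{-1} P_{A_3}(v)^2$ for some $c_0 > 0$. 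Shrinking $\varepsilon_3$ further so that $|P_{A_3}(v)| \leq \delta \mu_3$ on $U_{\varepsilon_3}(\phi_c)$ — possible by continuity of $P_{A_3}$ and $P_{A_3}(\phi_c) = 0$ — yields $-\delta^{-1} P_{A_3}(v)^2 \geq -\mu_3 |P_{A_3}(v)|$, and hence $S_c(\phi_c) \leq S_c(v) + \mu_3 |P_{A_3}(v)|$ for any prescribed $\mu_3 > 0$.

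The main obstacle is the coercivity estimate, and specifically the matching between the constraint functional that $P_A$ naturally controls — namely $\langle S_c''(\phi_c) \Lambda \phi_c, \cdot \rangle$, arising from $\theta_A(\phi_c) \to \Lambda \phi_c$ — and the negative direction of $S_c''(\phi_c)$. The clean resolution is the identity $L_V^{-1}(L \Lambda \phi_c) = \Lambda \phi_c$, which converts the required sign condition into $\langle S_c''(\phi_c) \Lambda \phi_c, \Lambda \phi_c \rangle < 0$, already available from \eqref{eq:Instability_UnstableDir_0030;gkdv_inst}. The secondary technical points are the uniform control of the Taylor remainders over $U_{\varepsilon_1}(\phi_c)$ and the convergence $\theta_A(\phi_c) \to \Lambda \phi_c$, both of which rely on the decay estimates \eqref{eq:Instability_Preliminaries_0010;gkdv_inst}--\eqref{eq:Instability_Preliminaries_0020;gkdv_inst} guaranteeing $\Lambda \phi_c \in H^{1 + \sigma/2}(\mathbb{R}) \cap L^1(\mathbb{R})$.
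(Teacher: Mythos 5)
Your argument has a genuine gap at its self-declared heart: the coercivity estimate. To run the standard ``coercivity modulo one constraint'' lemma you assert that the kernel of $L = S_c''(\phi_c) = D_x^\sigma + c - f'(\phi_c)$ is spanned by $\partial_x \phi_c$. This nondegeneracy property is nowhere established in the paper, and it is not available in the generality considered here: for fractional dispersion $1 \leq \sigma < 2$ nondegeneracy is the hard theorem of Frank--Lenzmann, proved only for \emph{single} power nonlinearities, and no analogue is known for the double power nonlinearity $a\phi^p + \phi^q$ (for $\sigma = 2$ one could try ODE arguments, but the paper treats all $1 \leq \sigma \leq 2$ uniformly). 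The gap is not cosmetic: if the kernel contained any element $\psi_0$ orthogonal to $\partial_x \phi_c$, then plugging $w = \psi_0$ into your claimed inequality would give $0 = \langle L\psi_0, \psi_0 \rangle \geq \delta \|\psi_0\|^2 - \delta^{-1}\langle \Lambda\phi_c, L\psi_0\rangle^2 = \delta\|\psi_0\|^2 > 0$, a contradiction. So your coercivity estimate is logically equivalent to assuming nondegeneracy, which is precisely the input one cannot supply. (Your other spectral ingredients are fine: Lemma \ref{lem:Variational_PositivenessofCoupling;gkdv_inst} does give Morse index at most one, \eqref{eq:Instability_UnstableDir_0030;gkdv_inst} gives at least one, and the essential spectrum claim follows from decay of $\phi_c$.)

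The paper's proof is designed exactly to avoid this spectral obstruction, and it is much softer. Instead of expanding around $\phi_c$ and inverting the Hessian, it perturbs $v$ \emph{along the direction} $\theta_{A_3}(v)$: Lemma \ref{lem:Instability_Lyapunov_ConstructionofNehari0;gkdv_inst} uses $\langle K_c'(\phi_c), \Lambda\phi_c\rangle \neq 0$ (a consequence of Lemma \ref{lem:Variational_PositivenessofCoupling;gkdv_inst} and \eqref{eq:Instability_UnstableDir_0030;gkdv_inst}, not of any kernel information) to find $\mu_\ast \in (-\mu_3,\mu_3)$ with $K_c(v + \mu_\ast \theta_{A_3}(v)) = 0$ and $v + \mu_\ast\theta_{A_3}(v) \neq 0$; then the variational characterization \eqref{eq:Intro_GroundState_Characterization;gkdv_inst} of the ground state as the minimizer of $S_c$ on the Nehari manifold gives $S_c(\phi_c) \leq S_c(v + \mu_\ast \theta_{A_3}(v))$; finally the concavity of $\mu \mapsto S_c(v + \mu\theta_A(v))$ near $\phi_c$ (Lemma \ref{lem:Instability_Lyapunov_IneqofPa_1;gkdv_inst}, again only needing \eqref{eq:Instability_UnstableDir_0030;gkdv_inst} and continuity) yields $S_c(v + \mu_\ast\theta_{A_3}(v)) \leq S_c(v) + \mu_\ast P_{A_3}(v) \leq S_c(v) + \mu_3|P_{A_3}(v)|$. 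If you want to salvage your approach, you would have to either prove nondegeneracy for this equation (a substantial open problem) or replace the coercivity step by a constrained variational argument of this Nehari type.
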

In the following, we proceed the proof of Proposition \ref{prop:Instability_Lyapunov_EstofPA;gkdv_inst}.

\begin{lem} \label{lem:Instability_Lyapunov_ConvofThetaA;gkdv_inst}
    It holds that $\theta_{A}(\phi_{c}) \rightarrow \Lambda \phi_{c}$ in $H^{\sigma/2}(\mathbb{R})$ as $A \rightarrow +\infty$.
\end{lem}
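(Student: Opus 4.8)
The plan is to exploit the normalization $\tilde z(\phi_c)=0$ to reduce the claim to an elementary cut-off computation. Since $\tilde z(\phi_c)=0$, the definition of $\theta_A$ gives
\[
\theta_A(\phi_c)=\partial_x\Phi_A+F_A(\phi_c)\,\partial_x\tilde z'(\phi_c),
\]
and, again using $\tilde z(\phi_c)=0$ in \eqref{eq:Instability_Lyapunov_0001;gkdv_inst}, one has $F_A(\phi_c)=-(\phi_c,\partial_x\Phi_A)_{L^2}$. Thus it suffices to establish two things: first, that $\partial_x\Phi_A\to\Lambda\phi_c$ in $H^{\sigma/2}(\mathbb R)$; and second, that $F_A(\phi_c)\to0$, for then the second summand tends to $0$ in $H^{\sigma/2}(\mathbb R)$ because $\partial_x\tilde z'(\phi_c)\in H^\infty(\mathbb R)\subset H^{\sigma/2}(\mathbb R)$ is a fixed function by (iii) of Lemma \ref{lem:Instability_Modulation_Modulation;gkdv_inst}.

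For the first point I would in fact prove the stronger convergence in $H^1(\mathbb R)$, which implies convergence in $H^{\sigma/2}(\mathbb R)$ since $\sigma/2\le1$ and hence $H^1(\mathbb R)\hookrightarrow H^{\sigma/2}(\mathbb R)$. Differentiating $\Phi_A=\Phi\rho_A$ and using $\Phi'=\Lambda\phi_c$ yields the decomposition
\[
\partial_x\Phi_A-\Lambda\phi_c=(\Lambda\phi_c)(\rho_A-1)+\tfrac1A\,\Phi\,\rho'(\cdot/A).
\]
I would bound each term in $H^1$. The first term is supported in $\{|x|\ge A\}$; since $\Lambda\phi_c\in H^{1+\sigma/2}(\mathbb R)$ by \eqref{eq:Instability_Preliminaries_0010;gkdv_inst}--\eqref{eq:Instability_Preliminaries_0020;gkdv_inst} (so that both $\Lambda\phi_c$ and $\partial_x\Lambda\phi_c$ lie in $L^2$), this term and its derivative $(\partial_x\Lambda\phi_c)(\rho_A-1)+(\Lambda\phi_c)\rho_A'$ tend to $0$ in $L^2$ as $A\to\infty$, by the vanishing of $L^2$-tails together with the uniform bound $|\rho_A'|\le A^{-1}\|\rho'\|_\infty$. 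For the second term the decisive point is that $\Phi$ is merely bounded and does not decay (it tends to a nonzero constant at $+\infty$), so the $A^{-1}$ prefactor must absorb the growing support: since $\rho'(\cdot/A)$ is supported in the annulus $\{A\le|x|\le2A\}$ of measure $O(A)$, one gets $\|A^{-1}\Phi\,\rho'(\cdot/A)\|_{L^2}=O(A^{-1/2})$, and the parallel computation $\partial_x[\tfrac1A\Phi\rho'(\cdot/A)]=\tfrac1A(\Lambda\phi_c)\rho'(\cdot/A)+\tfrac1{A^2}\Phi\rho''(\cdot/A)$ shows the derivative vanishes in $L^2$ as well. Hence $\partial_x\Phi_A\to\Lambda\phi_c$ in $H^1(\mathbb R)$, and a fortiori in $H^{\sigma/2}(\mathbb R)$.

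For the second point, the $L^2$-convergence $\partial_x\Phi_A\to\Lambda\phi_c$ just obtained lets me pass to the limit in $F_A(\phi_c)=-(\phi_c,\partial_x\Phi_A)_{L^2}$, giving $F_A(\phi_c)\to-(\phi_c,\Lambda\phi_c)_{L^2}=0$, where the last equality is exactly the orthogonality \eqref{eq:Instability_UnstableDir_0020;gkdv_inst}. Combining the two points yields $\theta_A(\phi_c)\to\Lambda\phi_c$ in $H^{\sigma/2}(\mathbb R)$. The only genuinely delicate step is the second term of the decomposition: because $\Phi$ fails to decay, one must carefully balance the $A^{-1}$ factor coming from the rescaled cut-off against the $O(A)$ measure of the annulus on which $\rho'(\cdot/A)$ is supported, and it is precisely the orthogonality relation \eqref{eq:Instability_UnstableDir_0020;gkdv_inst} that supplies the cancellation eliminating the modulation correction $F_A(\phi_c)$.
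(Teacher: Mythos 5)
Your proof is correct and follows essentially the same route as the paper: the same reduction to $H^{1}$-convergence, the same decomposition $\partial_{x}\Phi_{A}-\Lambda\phi_{c}=(\rho_{A}-1)\Lambda\phi_{c}+\tfrac{1}{A}\Phi\,\partial_{x}\rho(\cdot/A)$ with the same cut-off estimates, and the same use of the orthogonality $(\phi_{c},\Lambda\phi_{c})_{L^{2}}=0$ to kill the modulation term. The only (harmless) variation is that you deduce $F_{A}(\phi_{c})\to 0$ from the already-established $L^{2}$-convergence of $\partial_{x}\Phi_{A}$ via Cauchy--Schwarz, whereas the paper verifies it by a direct expansion of $F_{A}(\phi_{c})$; both rest on identical ingredients.
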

\begin{proof}
    It is sufficient to show that $\theta_{A}(\phi_{c}) \rightarrow \Lambda \phi_{c}$ in $H^{1}(\mathbb{R})$ as $A \rightarrow + \infty$.

    By the definition of $\tilde{z}$, we obtain
    \begin{equation}
        \theta_{A}(\phi_{c}) = \partial_{x} \Phi_{A} - \frac{F_{A}(\phi_{c})}{\| \partial_{x} \phi_{c} \|_{L^{2}}^{2}} \partial_{x}^{2} \phi_{c}. \label{eq:Instability_Lyapunov_0020;gkdv_inst}
    \end{equation}

    First, we show that 
    \begin{equation}
        F_{A}(\phi_{c}) \rightarrow 0 \label{eq:Instability_Lyapunov_0021;gkdv_inst}
    \end{equation}
    as $A \rightarrow +\infty$.
    A direct calculation yields that
    \begin{equation}
        F_{A}(\phi_{c}) = -\int_{\mathbb{R}} \phi_{c}(x) \Lambda \phi_{c}(x) \rho_{A}(x) \, dx - \frac{1}{A} \int_{\mathbb{R}} \phi_{c}(x) \Phi(x) \partial_{x} \rho\left( \frac{x}{A} \right) \, dx. \label{eq:Instability_Lyapunov_0030;gkdv_inst}
    \end{equation}
    We see that the first term of \eqref{eq:Instability_Lyapunov_0030;gkdv_inst} converges to zero as $A \rightarrow + \infty$ due to $(\phi_{c}, \Lambda \phi_{c})_{L^{2}} = 0$.
    The second term of \eqref{eq:Instability_Lyapunov_0030;gkdv_inst} is estimated as
    \begin{equation}
        \frac{1}{A} \left| \int_{\mathbb{R}} \phi_{c}(x) \Phi(x) \partial_{x} \rho\left( \frac{x}{A} \right) \, dx \right| \leq \frac{\| \partial_{x} \rho \|_{L^{\infty}} \| \phi_{c} \|_{L^{\infty}} \| \Phi \|_{L^{1}}}{A}. \\
    \end{equation}
    Then we obtain \eqref{eq:Instability_Lyapunov_0021;gkdv_inst}.

    Next, we show that $\theta_{A}(\phi_{c}) \rightarrow \Lambda \phi_{c}$ in $L^{2}(\mathbb{R})$ as $A \rightarrow +\infty$.
    By \eqref{eq:Instability_Lyapunov_0020;gkdv_inst}, we obtain
    \begin{equation}
        \| \theta_{A}(\phi_{c}) - \Lambda \phi_{c} \|_{L^{2}} \leq \| \partial_{x}\Phi_{A} - \Lambda \phi_{c} \|_{L^{2}} + \frac{|F_{A}(\phi_{c})|}{\| \partial_{x} \phi_{c} \|_{L^{2}}^{2}} \| \partial_{x}^{2} \phi_{c} \|_{L^{2}}. \label{eq:Instability_Lyapunov_0050;gkdv_inst}
    \end{equation}
    The first term of \eqref{eq:Instability_Lyapunov_0050;gkdv_inst} is estimated as
    \begin{align}
        \| \partial_{x} \Phi_{A} - \Lambda \phi_{c} \|_{L^{2}} &= \left\| \rho_{A} \Lambda \phi_{c} + \frac{1}{A} \Phi \partial_{x}\rho\left( \frac{\cdot}{A} \right) - \Lambda \phi_{c} \right\|_{L^{2}} \\
        &\leq \| (\rho_{A} - 1) \Lambda \phi_{c} \|_{L^{2}} + \frac{1}{A} \left\| \Phi \partial_{x}\rho\left( \frac{\cdot}{A} \right) \right\|_{L^{2}} \\
        &\leq \| (\rho_{A} - 1) \Lambda \phi_{c} \|_{L^{2}} + \frac{\| \Phi \|_{L^{\infty}} \| \partial_{x}\rho \|_{L^{2}}}{A^{1/2}} . \label{eq:Instability_Lyapunov_0060;gkdv_inst}
    \end{align}
    Therefore, we obtain it by \eqref{eq:Instability_Lyapunov_0050;gkdv_inst} and \eqref{eq:Instability_Lyapunov_0060;gkdv_inst} that $\| \theta_{A}(\phi_{c}) - \Lambda \phi_{c} \|_{L^{2}} \rightarrow 0$ as $A \rightarrow + \infty$.

    Finally, it is seen by direct calculations that
    \begin{align}
        \partial_{x} \theta_{A}(\phi_{c}) &= \partial_{x}^{2} \Phi_{A} - \frac{F_{A}(\phi_{c})}{\| \partial_{x} \phi_{c} \|_{L^{2}}} \partial_{x}^{3} \phi_{c}, \\
        \partial_{x}^{2} \Phi_{A}(x) &= \rho_{A}(x) \partial_{x} \Lambda \phi_{c}(x) + \frac{2}{A} \Lambda \phi_{c}(x) \partial_{x} \rho\left( \frac{x}{A} \right) + \frac{1}{A^{2}} \Phi(x) \partial_{x}^{2}\rho\left( \frac{x}{A} \right).
    \end{align}
    Then, similarly above, we can obtain that $\partial_{x} \theta_{A}(\phi_{c}) \rightarrow \partial_{x} \Lambda \phi_{c}$ in $L^{2}(\mathbb{R})$ as $A \rightarrow +\infty$.

    Hence, the proof is accomplished.
\end{proof}

\begin{lem} \label{lem:Instability_Lyapunov_IneqofPa_1;gkdv_inst}
    There exist $A_{2} \geq 1$, $\mu_{2} > 0$, and $\varepsilon_{2} > 0$ such that
    \begin{equation}
        S_{c}(v + \mu \theta_{A}(v)) \leq S_{c}(v) + \mu P_{A}(v)
    \end{equation}
    holds for all $A \in (A_{2}, \infty)$, $\mu \in (-\mu_{2}, \mu_{2})$, and $v \in U_{\varepsilon_{2}}(\phi_{c})$.
\end{lem}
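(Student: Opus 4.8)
The plan is to expand $S_{c}(v + \mu\theta_{A}(v))$ to second order in $\mu$ and show that the quadratic remainder is non-positive. Since $S_{c} \in C^{2}(H^{\sigma/2}(\mathbb{R}), \mathbb{R})$, the integral form of Taylor's theorem gives
\begin{equation}
    S_{c}(v + \mu\theta_{A}(v)) = S_{c}(v) + \mu P_{A}(v) + \mu^{2}\int_{0}^{1}(1-t)\langle S_{c}''(v + t\mu\theta_{A}(v))\theta_{A}(v), \theta_{A}(v)\rangle \, dt,
\end{equation}
where I have used $P_{A}(v) = \langle S_{c}'(v), \theta_{A}(v)\rangle$. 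Because $\mu^{2} \geq 0$ and $1 - t \geq 0$ on $[0,1]$, the claimed inequality reduces to the pointwise bound $\langle S_{c}''(w)\theta_{A}(v), \theta_{A}(v)\rangle \leq 0$ with $w = v + t\mu\theta_{A}(v)$, to be established uniformly over $t \in [0,1]$, $A > A_{2}$, $|\mu| < \mu_{2}$, and $v \in U_{\varepsilon_{2}}(\phi_{c})$.

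I would anchor the sign using \eqref{eq:Instability_UnstableDir_0030;gkdv_inst}, which gives $\langle S_{c}''(\phi_{c})\Lambda\phi_{c}, \Lambda\phi_{c}\rangle =: -\delta_{0} < 0$. Since $S_{c} \in C^{2}$, the quadratic form $(w, h) \mapsto \langle S_{c}''(w)h, h\rangle$ is continuous on $H^{\sigma/2}(\mathbb{R}) \times H^{\sigma/2}(\mathbb{R})$, so there is $r > 0$ with $\langle S_{c}''(w)h, h\rangle < -\delta_{0}/2 < 0$ whenever $\|w - \phi_{c}\|_{H^{\sigma/2}} < r$ and $\|h - \Lambda\phi_{c}\|_{H^{\sigma/2}} < r$. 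It then suffices to drive both arguments into this ball.

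The obstruction to doing so directly is that $U_{\varepsilon_{2}}(\phi_{c})$ is translation invariant, so $w$ need not be near $\phi_{c}$; I would resolve this by centering. Writing $\tilde{v} := v(\cdot + \tilde{z}(v))$ and using (iii) of Lemma \ref{lem:Instability_Modulation_Modulation;gkdv_inst}, one checks that $\theta_{A}(v) = \Theta_{A}(v)(\cdot - \tilde{z}(v))$ with
\begin{equation}
    \Theta_{A}(v) = \partial_{x}\Phi_{A} + \frac{F_{A}(v)}{(\tilde{v}, \partial_{x}^{2}\phi_{c})_{L^{2}}}\partial_{x}^{2}\phi_{c},
\end{equation}
so that after translating by $\tilde{z}(v)$ the point $w$ becomes $\tilde{w} := \tilde{v} + t\mu\Theta_{A}(v)$. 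Since $E$ and $M$, hence $S_{c}$ and its derivatives, are translation invariant, $\langle S_{c}''(w)\theta_{A}(v), \theta_{A}(v)\rangle = \langle S_{c}''(\tilde{w})\Theta_{A}(v), \Theta_{A}(v)\rangle$. Now $\partial_{x}\Phi_{A} \to \Lambda\phi_{c}$ in $H^{1}(\mathbb{R})$ and $F_{A}(v) = -(\tilde{v}, \partial_{x}\Phi_{A})_{L^{2}} \to 0$ (using $(\phi_{c}, \Lambda\phi_{c})_{L^{2}} = 0$ from \eqref{eq:Instability_UnstableDir_0020;gkdv_inst} and that $\tilde{v}$ is close to $\phi_{c}$), while $(\tilde{v}, \partial_{x}^{2}\phi_{c})_{L^{2}} \to -\|\partial_{x}\phi_{c}\|_{L^{2}}^{2} \neq 0$. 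Hence $\Theta_{A}(v) \to \Lambda\phi_{c}$ and $\tilde{w} \to \phi_{c}$ in $H^{\sigma/2}(\mathbb{R})$, and choosing $A_{2}$ large and $\mu_{2}, \varepsilon_{2}$ small forces $\|\Theta_{A}(v) - \Lambda\phi_{c}\|_{H^{\sigma/2}} < r$ and $\|\tilde{w} - \phi_{c}\|_{H^{\sigma/2}} < r$, which yields the required negativity.

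The main obstacle is uniformity: the bound must hold for all $A > A_{2}$ with a single pair $\mu_{2}, \varepsilon_{2}$, so the convergence $\theta_{A}(v) \to \Lambda\phi_{c}$ must be uniform in $A$ and in $v \in U_{\varepsilon_{2}}(\phi_{c})$, not merely for each fixed $A$. The decomposition through $\Theta_{A}(v)$ is precisely what makes this manageable, since it keeps $\partial_{x}\Phi_{A}$ untranslated and reduces everything to the single $v$-independent limit $\partial_{x}\Phi_{A} \to \Lambda\phi_{c}$ (furnished by the proof of Lemma \ref{lem:Instability_Lyapunov_ConvofThetaA;gkdv_inst}) together with the scalar convergences of $F_{A}(v)$ and $(\tilde{v}, \partial_{x}^{2}\phi_{c})_{L^{2}}$, each controllable uniformly on $U_{\varepsilon_{2}}(\phi_{c})$. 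The remaining point I would verify carefully is that $\tilde{v} = v(\cdot + \tilde{z}(v))$ stays close to $\phi_{c}$ uniformly on the translation-invariant neighborhood $U_{\varepsilon_{2}}(\phi_{c})$, which follows from the translation relation (ii) of Lemma \ref{lem:Instability_Modulation_Modulation;gkdv_inst} together with the continuity of the modulation map.
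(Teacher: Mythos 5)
Your proposal is correct and follows essentially the same route as the paper: Taylor expansion of $S_{c}$ to second order, then a uniform negativity bound on $\langle S_{c}''(v+\mu s\,\theta_{A}(v))\theta_{A}(v),\theta_{A}(v)\rangle$ obtained from \eqref{eq:Instability_UnstableDir_0030;gkdv_inst}, continuity of the second-derivative form, translation invariance via Lemma \ref{lem:Instability_Modulation_Modulation;gkdv_inst}, and the convergence of $\theta_{A}$ to $\Lambda\phi_{c}$ (Lemma \ref{lem:Instability_Lyapunov_ConvofThetaA;gkdv_inst}). Your explicit centering $\theta_{A}(v)=\Theta_{A}(v)(\cdot-\tilde{z}(v))$ is just a more detailed execution of the uniformity in $A$ and $v$ that the paper's proof asserts tersely through the invariance $R(\mu, v(\cdot+y))=R(\mu,v)$ and continuity of $(\mu,v)\mapsto R(\mu,v)$.
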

\begin{proof}
    Let $\mu \in \mathbb{R}$ and $v \in U_{\varepsilon_{1}}(\phi_{c})$ be fixed.
    By the Taylor expansion, we obtain
    \begin{align}
        S_{c}(v + \mu \theta_{A}(v)) =& S_{c}(v) + \mu \langle S_{c}'(v), \theta_{A}(v) \rangle \\
        &+ \mu^{2} \int_{0}^{1} (1 - s) \langle S_{c}''(v + \mu s \theta_{A}(v)) \theta_{A}(v), \theta_{A}(v) \rangle \, ds. \label{eq:Instability_Lyapunov_0070;gkdv_inst}
    \end{align}
    
    For $\mu \in \mathbb{R}$ and $v \in U_{\varepsilon_{1}}(\phi_{c})$, we put
    \begin{equation}
        R(\mu, v) \coloneq \langle S_{c}''(v + \mu \theta_{A}(v)) \theta_{A}(v), \theta_{A}(v) \rangle.
    \end{equation}
    Then, we see it by Lemma \ref{lem:Instability_Modulation_Modulation;gkdv_inst} that
    \begin{equation}
        R(\mu, v(\cdot + y)) = R(\mu, v) \label{eq:Instability_Lyapunov_0071;gkdv_inst}
    \end{equation}
    holds for all $y \in \mathbb{R}$.
    Then, by the continuity of the mapping $\mathbb{R} \times H^{\sigma/2}(\mathbb{R}) \ni (\mu, v) \mapsto R(\mu, v)$, \eqref{eq:Instability_Lyapunov_0071;gkdv_inst}, \eqref{eq:Instability_UnstableDir_0030;gkdv_inst}, and Lemma \ref{lem:Instability_Lyapunov_ConvofThetaA;gkdv_inst}, we see that there exist $A_{2} \geq 1$, $\mu_{2} > 0$, and $\varepsilon_{2} \in (0, \varepsilon_{1})$ such that
    \begin{equation}
        \langle S_{c}''(v + \mu \theta_{A}(v)) \theta_{A}(v), \theta_{A}(v) \rangle \leq \frac{1}{2} \langle S_{c}''(\phi_{c}) \Lambda \phi_{c}, \Lambda \phi_{c} \rangle < 0 \label{eq:Instability_Lyapunov_0090;gkdv_inst}
    \end{equation}
    holds for $A \in (A_{2}, \infty)$, $\mu \in (-\mu_{2}, \mu_{2})$, and $v \in U_{\varepsilon_{2}}(\phi_{c})$.
    Combining this with \eqref{eq:Instability_Lyapunov_0070;gkdv_inst}, we conclude the statement.
\end{proof}

\begin{lem} \label{lem:Instability_Lyapunov_ConstructionofNehari0;gkdv_inst}
    There exist $A_{3} \geq 1$, $\mu_{3} > 0$, and $\varepsilon_{3} > 0$ such that, for any $v \in U_{\varepsilon_{3}}(\phi_{c})$, there exists $\mu_{\ast} \in (-\mu_{3}, \mu_{3})$ which satisfies that
    \begin{equation}
        K_{c}(v + \mu_{\ast} \theta_{A_{3}}(v)) = 0, \quad v + \mu_{\ast} \theta_{A_{3}}(v) \neq 0.
    \end{equation}
\end{lem}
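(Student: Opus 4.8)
The plan is to produce $\mu_\ast$ as a small root of the scalar function $g_v(\mu) \coloneq K_c(v + \mu\theta_{A_3}(v))$, the decisive point being that the perturbation direction $\theta_{A_3}(v)$ is transverse to the Nehari manifold $\{K_c = 0\}$ near $\phi_c$. Since $\phi_c$ solves \eqref{eq:Intro_dSPc;gkdv_inst}, we have $K_c(\phi_c) = \langle S_c'(\phi_c), \phi_c\rangle = 0$, so $g_{\phi_c}(0) = 0$, and for each fixed $v$ the map $\mu \mapsto g_v(\mu)$ is $C^1$ with $g_v'(\mu) = \langle K_c'(v + \mu\theta_{A_3}(v)), \theta_{A_3}(v)\rangle$. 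The whole argument reduces to showing that this derivative stays bounded away from $0$ near $(\mu, v) = (0, \phi_c)$, and then invoking the intermediate value theorem.

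First I would analyse the derivative at the base point. By Lemma \ref{lem:Instability_Lyapunov_ConvofThetaA;gkdv_inst} and the boundedness of $K_c'(\phi_c)$ on $H^{\sigma/2}(\mathbb{R})$,
\[ \langle K_c'(\phi_c), \theta_A(\phi_c)\rangle \longrightarrow \langle K_c'(\phi_c), \Lambda\phi_c\rangle = \partial_\lambda K_c(\phi_c^\lambda)\big|_{\lambda=1} \quad (A \to \infty), \]
using $\Lambda\phi_c = \partial_\lambda\phi_c^\lambda|_{\lambda=1}$ and the chain rule. A scaling computation, together with the $L^2$-invariance of \eqref{eq:Instability_0010;gkdv_inst}, gives
\[ K_c(\phi_c^\lambda) = \lambda^\sigma\|D_x^{\sigma/2}\phi_c\|_{L^2}^2 + c\|\phi_c\|_{L^2}^2 - a\lambda^{(p-1)/2}\!\int_{\mathbb{R}}\phi_c^{p+1}\,dx - \lambda^{(q-1)/2}\!\int_{\mathbb{R}}\phi_c^{q+1}\,dx, \]
and differentiating at $\lambda=1$, then eliminating $\|D_x^{\sigma/2}\phi_c\|_{L^2}^2$ via the identity $\partial_\lambda S_c(\phi_c^\lambda)|_{\lambda=1} = \langle S_c'(\phi_c), \Lambda\phi_c\rangle = 0$, yields
\[ \partial_\lambda K_c(\phi_c^\lambda)\big|_{\lambda=1} = -\frac{a(p-1)^2}{2(p+1)}\int_{\mathbb{R}}\phi_c^{p+1}\,dx - \frac{(q-1)^2}{2(q+1)}\int_{\mathbb{R}}\phi_c^{q+1}\,dx. \]

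The main obstacle is to rule out the vanishing of this quantity, which is not clear from its sign alone: in case (II-1), where $a=-1$ and $\phi_c > 0$, the two terms compete. Here I would bring in Lemma \ref{lem:Instability_UnstableDir_1;gkdv_inst}. Writing $P \coloneq a(p-1)\int\phi_c^{p+1}\,dx$ and $Q \coloneq (q-1)\int\phi_c^{q+1}\,dx$, the relation $K_c(\phi_c)=0$ recasts Lemma \ref{lem:Instability_UnstableDir_1;gkdv_inst} as $\langle S_c''(\phi_c)\phi_c,\phi_c\rangle = -(P+Q) < 0$, i.e.\ $P+Q>0$, whereas the displayed derivative equals $-\tfrac{p-1}{2(p+1)}P - \tfrac{q-1}{2(q+1)}Q$. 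In every case of Theorem \ref{thm:Intro_dSPc_previous;gkdv_inst} one has $\int_{\mathbb{R}}\phi_c^{q+1}\,dx>0$ (either $\phi_c>0$, or $\phi_c<0$ with $q$ odd), so $Q>0$; if the derivative vanished, then $P = -\tfrac{(q-1)(p+1)}{(q+1)(p-1)}Q$, whence $P+Q = \tfrac{2(p-q)}{(q+1)(p-1)}Q < 0$ because $p<q$, contradicting $P+Q>0$. Hence $\partial_\lambda K_c(\phi_c^\lambda)|_{\lambda=1}\neq 0$, and the transversality is secured in all cases.

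With transversality established, I would fix $A_3$ so large—and $A_3\ge A_2$, $\mu_3\le\mu_2$, $\varepsilon_3\le\varepsilon_2$ to remain compatible with Lemma \ref{lem:Instability_Lyapunov_IneqofPa_1;gkdv_inst}—that $\langle K_c'(\phi_c),\theta_{A_3}(\phi_c)\rangle \neq 0$. Since $K_c'$ is continuous and $v\mapsto\theta_{A_3}(v)$ is continuous on $U_{\varepsilon_1}(\phi_c)$ (the modulation $\tilde z$ being $C^1$ by Lemma \ref{lem:Instability_Modulation_Modulation;gkdv_inst}), the map $(\mu,v)\mapsto g_v'(\mu)$ is continuous, so after shrinking $\varepsilon_3$ and $\mu_3$ there is $\delta>0$ with $|g_v'(\mu)|\ge\delta$ for all $v\in U_{\varepsilon_3}(\phi_c)$ and $|\mu|\le\mu_3$; thus $g_v$ is strictly monotone on $[-\mu_3,\mu_3]$. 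Finally, shrinking $\varepsilon_3$ once more so that $|g_v(0)| = |K_c(v)| < \delta\mu_3$ (possible since $K_c$ is continuous and $K_c(\phi_c)=0$) forces $g_v(-\mu_3)$ and $g_v(\mu_3)$ to have opposite signs, and the intermediate value theorem yields $\mu_\ast\in(-\mu_3,\mu_3)$ with $K_c(v+\mu_\ast\theta_{A_3}(v))=0$. Because $v+\mu_\ast\theta_{A_3}(v)$ is $H^{\sigma/2}$-close to $\phi_c\neq 0$, a last reduction of $\varepsilon_3$ and $\mu_3$ guarantees $v+\mu_\ast\theta_{A_3}(v)\neq 0$, completing the construction.
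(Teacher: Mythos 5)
Your proposal is correct, and its skeleton (a uniform lower bound on the derivative $\mu \mapsto \partial_\mu K_c(v+\mu\theta_{A_3}(v))$ near $(0,\phi_c)$, followed by the intermediate value theorem and a final shrinking to ensure nonvanishing) is the same as the paper's. Where you genuinely diverge is the key non-degeneracy step $\langle K_c'(\phi_c),\Lambda\phi_c\rangle \neq 0$. The paper gets this in one line from Lemma \ref{lem:Variational_PositivenessofCoupling;gkdv_inst}: since $\langle S_c''(\phi_c)\Lambda\phi_c,\Lambda\phi_c\rangle < 0$ by \eqref{eq:Instability_UnstableDir_0030;gkdv_inst}, the coupling cannot vanish. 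You instead compute, via the scaling and the Pohozaev-type identity \eqref{eq:Main_0040;gkdv_inst},
\begin{equation}
    \langle K_c'(\phi_c),\Lambda\phi_c\rangle = \partial_\lambda K_c(\phi_c^\lambda)\big|_{\lambda=1} = -\frac{a(p-1)^2}{2(p+1)}\int_{\mathbb{R}}\phi_c^{p+1}\,dx - \frac{(q-1)^2}{2(q+1)}\int_{\mathbb{R}}\phi_c^{q+1}\,dx,
\end{equation}
and rule out its vanishing by playing it against Lemma \ref{lem:Instability_UnstableDir_1;gkdv_inst}, rewritten in your notation as $P+Q>0$, together with $Q>0$ in every case of Theorem \ref{thm:Intro_dSPc_previous;gkdv_inst}; the algebra checks out (vanishing would force $P+Q = \tfrac{2(p-q)}{(q+1)(p-1)}Q<0$). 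The trade-off: the paper's route is shorter, independent of the specific nonlinearity, and needs no case analysis, but it imports the variational Lemma \ref{lem:Variational_PositivenessofCoupling;gkdv_inst} (Ohta) and leans on the standing hypothesis $\partial_\lambda^2 S_c(\phi_c^\lambda)|_{\lambda=1}<0$; your route exploits the double-power structure and the sign information from Theorem \ref{thm:Intro_dSPc_previous;gkdv_inst}, and in exchange establishes transversality unconditionally, i.e.\ without invoking the instability hypothesis of Proposition \ref{prop:Instability_Sufficient;gkdv_inst} at all, which is mildly stronger.

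One point you should make explicit: your assertion that continuity of $(\mu,v)\mapsto \langle K_c'(v+\mu\theta_{A_3}(v)),\theta_{A_3}(v)\rangle$ plus shrinking $\varepsilon_3$ yields $|g_v'(\mu)|\geq\delta$ on all of $U_{\varepsilon_3}(\phi_c)$ does not follow from continuity alone, because $U_{\varepsilon_3}(\phi_c)$ is a tubular neighborhood of the translation orbit of $\phi_c$ and contains elements (e.g.\ far translates of $\phi_c$) that are not close to $\phi_c$ in norm. One needs the translation invariance $g_{v(\cdot+y)}'(\mu)=g_v'(\mu)$, which follows from Lemma \ref{lem:Instability_Modulation_Modulation;gkdv_inst} (ii) and is precisely the device \eqref{eq:Instability_Lyapunov_0071;gkdv_inst} used in the proof of Lemma \ref{lem:Instability_Lyapunov_IneqofPa_1;gkdv_inst} that the paper appeals to here as well. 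The same remark applies to your shrinking step ensuring $|K_c(v)|<\delta\mu_3$ (use that $K_c$ is translation invariant and $K_c(\phi_c(\cdot-y))=0$) and to the final nonvanishing of $v+\mu_\ast\theta_{A_3}(v)$. With that observation inserted, your argument is complete.
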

\begin{proof}
    Since $\langle S_{c}''(\phi_{c}) \Lambda \phi_{c}, \Lambda \phi_{c} \rangle < 0$, we obtain it by Lemma \ref{lem:Variational_PositivenessofCoupling;gkdv_inst} that $\langle K_{c}'(\phi_{c}), \Lambda \phi_{c} \rangle \neq 0$.

    First we consider the case that $\langle K_{c}'(\phi_{c}), \Lambda \phi_{c} \rangle > 0$ holds.
    For $v \in U_{\varepsilon_{2}}(\phi_{c})$ and $A \geq A_{2}$, by the Taylor expansion with respect to $\mu \in \mathbb{R}$, we obtain
    \begin{equation}
        K_{c}(v + \mu \theta_{A}(v)) = K_{c}(v) + \mu \int_{0}^{1} \langle K_{c}'(v + \mu s \theta_{A}(v)), \theta_{A}(v) \rangle \, ds. \label{eq:Instability_Lyapunov_0095;gkdv_inst}
    \end{equation}
    Therefore, with similar argument for obtaining \eqref{eq:Instability_Lyapunov_0090;gkdv_inst} in the proof of Lemma \ref{lem:Instability_Lyapunov_IneqofPa_1;gkdv_inst}, we can see that there exist $A_{3} \in [A_{2}, \infty)$, $\mu_{3} \in (0, \mu_{2})$, and $\varepsilon_{3} \in (0, \varepsilon_{2}]$ such that
    \begin{equation}
        \langle K_{c}'(v + \mu \theta_{A_{3}}(v)), \theta_{A_{3}}(v) \rangle \geq \frac{1}{2} \langle K_{c}'(\phi_{c}), \Lambda \phi_{c} \rangle > 0 \label{eq:Instability_Lyapunov_0100;gkdv_inst}
    \end{equation}
    holds for $v \in U_{\varepsilon_{3}}(\phi_{c})$ and $\mu \in (-\mu_{3}, \mu_{3})$.
    By $K_{c}(\phi_{c}(\cdot - \tilde{z}(v))) = K_{c}(\phi_{c}) = 0$, we can retake $\varepsilon_{3} > 0$ so small that
    \begin{equation}
        |K_{c}(v)| \leq \frac{\mu_{3}}{2} \langle K_{c}'(\phi_{c}), \Lambda \phi_{c} \rangle \label{eq:Instability_Lyapunov_0110;gkdv_inst}
    \end{equation}
    holds for $v \in U_{\varepsilon_{3}}(\phi_{c})$.

    Now we let $v \in U_{\varepsilon_{3}}(\phi_{c})$.
    If $K_{c}(v) < 0$ holds, then we see it by \eqref{eq:Instability_Lyapunov_0095;gkdv_inst}, \eqref{eq:Instability_Lyapunov_0100;gkdv_inst}, and \eqref{eq:Instability_Lyapunov_0110;gkdv_inst} that
    \begin{align}
        K_{c}(v + \mu_{3} \theta_{A_{3}}(v)) &= K_{c}(v) + \mu_{3} \int_{0}^{1} \langle K_{c}'(v + \mu_{3} s \theta_{A_{3}}(v)), \theta_{A_{3}}(v) \rangle \, ds \\
        &> - \frac{\mu_{3}}{2} \langle K_{c}'(\phi_{c}), \Lambda \phi_{c} \rangle + \frac{\mu_{3}}{2} \langle K_{c}'(\phi_{c}), \Lambda \phi_{c} \rangle = 0.
    \end{align}
    Since $K_{c}(v) < 0$, the continuity of the function $\mathbb{R} \ni \mu \mapsto K(v + \mu \theta_{A_{3}}(v))$ gives some $\mu_{\ast} \in (0, \mu_{3})$ such that $K_{c}(v + \mu_{\ast} \theta_{A_{3}}(v)) = 0$.

    If $K_{c}(v) > 0$, we can obtain it similarly that
    \begin{align}
        K_{c}(v - \mu_{3}\theta_{A_{3}}(v)) &= K_{c}(v) - \mu_{3} \int_{0}^{1} \langle K_{c}'(v + \mu_{3} s \theta_{A_{3}}(v)), \theta_{A_{3}}(v) \rangle \, ds \\
        &< \frac{\mu_{3}}{2} \langle K_{c}'(\phi_{c}), \Lambda \phi_{c} \rangle - \frac{\mu_{3}}{2} \langle K_{c}'(\phi_{c}), \Lambda \phi_{c} \rangle = 0,
    \end{align}
    Therefore, we can take some $\mu_{\ast} \in (-\mu_{3}, 0)$ such that $K_{c}(v + \mu_{\ast} \theta_{A_{3}}(v)) = 0$.

    If $K_{c}(v) = 0$, it is clear that $\mu_{\ast} = 0$.

    Finally, we retake $\varepsilon_{3}$ so small as to obtain $v + \mu_{\ast} \theta_{A_{3}}(v) \neq 0$ if necessary.

    When $\langle K_{c}'(\phi_{c}), \Lambda \phi_{c} \rangle < 0$ holds, we can prove the statement similarly above.
\end{proof}

Now we give the proof of Proposition \ref{prop:Instability_Lyapunov_EstofPA;gkdv_inst}.
\begin{proof}
    Let $\phi_{c}$ be a ground state solution to \eqref{eq:Intro_dSPc;gkdv_inst} for $c > 0$.
    By Lemma \ref{lem:Instability_Lyapunov_ConstructionofNehari0;gkdv_inst}, for any $v \in U_{\varepsilon_{3}}(\phi_{c})$, there exists $\mu_{\ast} \in (-\mu_{3}, \mu_{3})$ such that
    \begin{equation}
        K_{c}(v + \mu_{\ast} \theta_{A_{3}}(v)) = 0, \quad v + \mu_{\ast} \theta_{A_{3}}(v) \neq 0.
    \end{equation}
    Then, by \eqref{eq:Intro_GroundState_Characterization;gkdv_inst} and Lemma \ref{lem:Instability_Lyapunov_IneqofPa_1;gkdv_inst}, we obtain
    \begin{equation}
        S_{c}(\phi_{c}) \leq S_{c}(v + \mu_{\ast} \theta_{A_{3}}(v)) \leq S_{c}(v) + \mu_{\ast} P_{A_{3}}(v) \leq S_{c}(v) + \mu_{3} |P_{A_{3}}(v)|.
    \end{equation}
    Therefore, the proof is accomplished.
\end{proof}

Now we give the proof of Proposition \ref{prop:Instability_Sufficient;gkdv_inst} by contradiction.
\begin{proof}
    Let $\phi_{c}$ be a ground state solution to \eqref{eq:Intro_dSPc;gkdv_inst} for $c > 0$.

    First, we consider the function $(0, \infty) \ni \lambda \mapsto S_{c}(\phi_{c}^{\lambda})$.
    Since $\partial_{\lambda} S_{c}(\phi_{c}^{\lambda}) |_{\lambda = 1} = 0$ and $\partial_{\lambda}^{2} S_{c}(\phi_{c}^{\lambda}) |_{\lambda = 1} < 0$, we can find some $\mu_{1} > 0$ such that 
    \begin{equation}
        S_{c}(\phi_{c}^{\lambda}) < S_{c}(\phi_{c}) \label{eq:Instability_Lyapunov_0131;gkdv_inst}
    \end{equation}
    holds for $\lambda \in (1 - \mu_{1}, 1 + \mu_{1})$.

    Now we suppose that the travelling wave solution $\phi_{c}(x - ct)$ is stable.
    Then we see that there exists $\mu_{0} \in (-\mu_{1}, 0) \cup (0, \mu_{1})$ such that $u^{1 - \mu_{0}}(t) \in U_{\varepsilon_{3}}(\phi_{c})$ for all $t \geq 0$, where $u^{1 - \mu_{0}}(t) \in C([0, \infty), H^{\sigma/2}(\mathbb{R}))$ is a time-global solution to \eqref{eq:Intro_gKdV_DoublePower;gkdv_inst} with $u^{1 - \mu_{0}}(0) = \phi_{c}^{1 - \mu_{0}}$.
    Then, by Lemma \ref{lem:Instability_Lyapunov_BoundednessofJ;gkdv_inst}, we see that
    \begin{equation}
        |J_{A_{3}}(u^{1 - \mu_{0}}(t))| \leq C A_{3}^{1/2} \label{eq:Instability_Lyapunov_0140;gkdv_inst}
    \end{equation}
    holds for all $t \geq 0$.

    Now we put
    \begin{equation}
        \delta_{\mu_{0}} \coloneq S_{c}(\phi_{c}) - S_{c}(\phi_{c}^{1 - \mu_{0}}) > 0.
    \end{equation}
    Then, by Proposition \ref{prop:Instability_Lyapunov_EstofPA;gkdv_inst} and the conservation laws, we obtain
    \begin{equation}
        \delta_{\mu_{0}} = S_{c}(\phi_{c}) - S_{c}(u^{1 - \mu_{0}}(t)) \leq \mu_{3} |P_{A_{3}}(u^{1 - \mu_{0}}(t))|,
    \end{equation}
    that is,
    \begin{equation}
        |P_{A_{3}}(u^{1 - \mu_{0}}(t))| \geq \frac{\delta_{\mu_{0}}}{\mu_{3}} \label{eq:Instability_Lyapunov_0150;gkdv_inst}
    \end{equation}
    holds for all $t \geq 0$.
    Since
    \begin{equation}
        \frac{d}{dt} J_{A_{3}}(u^{1 - \mu_{0}}(t)) = -P_{A_{3}}(u^{1 - \mu_{0}}(t)),
    \end{equation}
    we see it by \eqref{eq:Instability_Lyapunov_0150;gkdv_inst} that $|J_{A_{3}}(u^{1 - \mu_{0}}(t))| \rightarrow \infty$ as $t \rightarrow + \infty$, which contradicts to \eqref{eq:Instability_Lyapunov_0140;gkdv_inst}.

    Hence, the proof is accomplished.
\end{proof}

\section{Proof of Theorem \ref{thm:Intro_Main_Instability;gkdv_inst}} \label{section:Proof_of_Theorem;gkdv_inst}

In this section, we prove Theorem \ref{thm:Intro_Main_Instability;gkdv_inst} by observing conditions of $p, q \in \mathbb{N}$ which satisfy the assumption of Proposition \ref{prop:Instability_Sufficient;gkdv_inst}.

First, we recall the convergence properties of ground state solutions to \eqref{eq:Intro_dSPc;gkdv_inst}.
Here we consider the stationary problem with a single power nonlinearity
\begin{equation}
    D_{x}^{\sigma} \psi + \psi - \psi^{q} = 0, \quad x \in \mathbb{R}. \label{eq:Main_0010;gkdv_inst}
\end{equation}
We let $\psi_{1, q} \in H^{\sigma/2}(\mathbb{R})$ be the unique positive, even ground state solution to \eqref{eq:Main_0010;gkdv_inst}.
Moreover, for a function $\phi$, we consider the function $\breve{\phi}$ given by the scaling
\begin{equation}
    \phi(x) = c^{1/(q-1)} \breve{\phi}(c^{1/\sigma} x). \label{eq:Main_0030;gkdv_inst}
\end{equation}

\begin{lem} \label{lem:Main_ConvofGS;gkdv_inst}
    \begin{enumerate}[label=\textup{(\Roman*) \ }]
        \item Assume one of conditions \textup{(I)} or \textup{(II-1)} in Theorem \ref{thm:Intro_dSPc_previous;gkdv_inst}.
        Let $\phi_{c} \in H^{\sigma/2}(\mathbb{R})$ be a positive ground state solution to \eqref{eq:Intro_dSPc;gkdv_inst} for $c > 0$, and let $\breve{\phi}_{c}$ be a function given by the scaling \eqref{eq:Main_0030;gkdv_inst}.
        Then it holds that $\breve{\phi}_{c} \rightarrow \psi_{1, q}$ in $H^{\sigma/2}(\mathbb{R})$ as $c \rightarrow +\infty$.

        \item Assume condition \textup{(II-2)} in Theorem \ref{thm:Intro_dSPc_previous;gkdv_inst}.
        Let $\phi_{c} \in H^{\sigma/2}(\mathbb{R})$ be a negative ground state solution to \eqref{eq:Intro_dSPc;gkdv_inst} for $c > 0$, and $\breve{\phi}_{c}$ be a function given by the scaling \eqref{eq:Main_0030;gkdv_inst}.
        Moreover, put $\chi_{1, q} \coloneq - \psi_{1, q}$.
        Then it holds that $\breve{\phi}_{c} \rightarrow \chi_{1, q}$ in $H^{\sigma/2}(\mathbb{R})$ as $c \rightarrow + \infty$.
    \end{enumerate}
\end{lem}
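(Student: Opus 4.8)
The plan is to treat this as a vanishing-coefficient limit governed by the scaling \eqref{eq:Main_0030;gkdv_inst}. First I would substitute $\phi_c(x)=c^{1/(q-1)}\breve{\phi}_c(c^{1/\sigma}x)$ into \eqref{eq:Intro_dSPc;gkdv_inst}. Using the dilation rule $D_x^{\sigma}[g(c^{1/\sigma}\,\cdot\,)](x)=c\,(D_x^{\sigma}g)(c^{1/\sigma}x)$ and dividing through by $c^{q/(q-1)}$, the equation for $\phi_c$ becomes
\begin{equation}
    D_x^{\sigma} \breve{\phi}_c + \breve{\phi}_c - a\varepsilon_c \breve{\phi}_c^{\,p} - \breve{\phi}_c^{\,q} = 0, \qquad \varepsilon_c \coloneq c^{-(q-p)/(q-1)}.
\end{equation}
Since $p<q$, we have $\varepsilon_c\to 0$ as $c\to+\infty$, so the limiting equation is exactly \eqref{eq:Main_0010;gkdv_inst}. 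In case (II-2), where $p$ is even, $q$ is odd and $\breve{\phi}_c<0$, the substitution $w=-\breve{\phi}_c>0$ converts the scaled equation into $D_x^{\sigma}w+w-\varepsilon_c w^{p}-w^{q}=0$; hence it suffices to handle the positive case and then pass to $\chi_{1,q}=-\psi_{1,q}$.

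Next I would establish uniform bounds and extract a limit. Testing the scaled equation against $\breve{\phi}_c$ gives the Nehari identity $\|\breve{\phi}_c\|_{H^{\sigma/2}}^{2}=a\varepsilon_c\int_{\mathbb{R}}\breve{\phi}_c^{\,p+1}\,dx+\int_{\mathbb{R}}\breve{\phi}_c^{\,q+1}\,dx$, and the dilation (Pohozaev) identity yields a second relation between $\|D_x^{\sigma/2}\breve{\phi}_c\|_{L^{2}}^{2}$, $\|\breve{\phi}_c\|_{L^{2}}^{2}$ and the two nonlinear integrals. I would combine these with the variational characterization \eqref{eq:Intro_GroundState_Characterization;gkdv_inst}: projecting the fixed profile $\psi_{1,q}$ onto the scaled Nehari manifold by a scalar $t_c\to 1$ shows that the ground state action level of $\breve{\phi}_c$ stays bounded and, in the limit, does not exceed that of \eqref{eq:Main_0010;gkdv_inst}. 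Together the identities bound $\{\breve{\phi}_c\}$ in $H^{\sigma/2}(\mathbb{R})$ uniformly for large $c$ and, crucially, furnish a uniform lower bound $\int_{\mathbb{R}}|\breve{\phi}_c|^{q+1}\,dx\geq\kappa>0$. Passing to a subsequence, $\breve{\phi}_c\rightharpoonup\psi_{\ast}$ weakly in $H^{\sigma/2}(\mathbb{R})$.

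To identify $\psi_{\ast}$ I would upgrade weak to strong $L^{q+1}$-convergence. Here I exploit that each $\breve{\phi}_c$ is even and symmetric-decreasing (the symmetry property recorded after Theorem \ref{thm:Intro_dSPc_previous;gkdv_inst}): such a function satisfies the pointwise bound $|\breve{\phi}_c(x)|\leq\|\breve{\phi}_c\|_{L^{2}}|x|^{-1/2}$, so the uniform $L^{2}$ bound produces a uniform tail decay which, with the local compactness of the embedding into $L^{q+1}_{\mathrm{loc}}$, gives $\breve{\phi}_c\to\psi_{\ast}$ in $L^{q+1}(\mathbb{R})$ along the subsequence. The lower bound $\int_{\mathbb{R}}|\breve{\phi}_c|^{q+1}\,dx\geq\kappa$ then forces $\psi_{\ast}\neq0$, and $\psi_{\ast}$ inherits the common sign of the $\breve{\phi}_c$. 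Letting $c\to+\infty$ in the weak formulation and using $\varepsilon_c\to0$ to kill the $p$-term, $\psi_{\ast}$ is a nontrivial signed solution of \eqref{eq:Main_0010;gkdv_inst}; a level comparison shows it is a ground state, so by uniqueness of the ground state (classical for $\sigma=2$, Frank--Lenzmann for $1\leq\sigma<2$) together with evenness, $\psi_{\ast}=\psi_{1,q}$ in the positive case and $\psi_{\ast}=\chi_{1,q}$ in case (II-2). Finally, convergence of the nonlinear integrals and of the action level forces $\|\breve{\phi}_c\|_{H^{\sigma/2}}\to\|\psi_{\ast}\|_{H^{\sigma/2}}$, which with weak convergence upgrades to strong convergence in $H^{\sigma/2}(\mathbb{R})$; as the limit is independent of the subsequence, the whole family converges.

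I expect the main obstacle to be the compactness on the whole line, namely ruling out vanishing and dichotomy of the weak limit. Because $H^{\sigma/2}(\mathbb{R})$ does not embed compactly into $L^{q+1}(\mathbb{R})$, weak convergence alone does not transmit the nonlinear integrals, and the argument hinges on making two estimates uniform in $c$: the tail decay coming from the symmetric-decreasing structure, which yields $L^{q+1}$-compactness, and the lower bound on $\int_{\mathbb{R}}|\breve{\phi}_c|^{q+1}\,dx$, which guarantees that the limit is nontrivial.
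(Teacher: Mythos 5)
The paper itself gives no in-text proof of this lemma: it is disposed of with a single citation to the author's earlier work (\cite[Lemma 4.3]{kokubu2025stability}), so there is no internal argument to compare yours against line by line. Judged on its own merits, your proposal is the natural variational route and is essentially sound: the rescaled equation with vanishing coefficient $\varepsilon_c = c^{-(q-p)/(q-1)}$ is exactly the one the paper itself records in Section 4, the reduction of case (II-2) to the positive case via $w = -\breve{\phi}_c$ is legitimate precisely because $p$ is even and $q$ is odd (this sign flip also identifies the two action functionals, so it carries ground states to ground states, a point you should state explicitly since the Nehari characterization is what you minimize over), and the chain ``uniform bounds $\Rightarrow$ weak limit $\Rightarrow$ compactness from the even, monotone structure $\Rightarrow$ identification by uniqueness $\Rightarrow$ norm convergence'' is the standard and correct skeleton. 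Two details deserve more care than your sketch gives them. First, closing the uniform $H^{\sigma/2}$ upper bound in the case $a=+1$: combining the action level with the Nehari identity leaves $\bigl(\tfrac12-\tfrac1{q+1}\bigr)\|\breve{\phi}_c\|_{H^{\sigma/2}}^2 \leq C + C\varepsilon_c \int_{\mathbb{R}} \breve{\phi}_c^{\,p+1}\,dx$, and the last term is of order $\|\breve{\phi}_c\|_{H^{\sigma/2}}^{p+1}$, so it cannot be absorbed naively; one needs the interpolation $\int_{\mathbb{R}}|v|^{p+1}\,dx \leq \bigl(\int_{\mathbb{R}}|v|^{2}\,dx\bigr)^{\theta}\bigl(\int_{\mathbb{R}}|v|^{q+1}\,dx\bigr)^{1-\theta}$ with $\theta=(q-p)/(q-1)$, together with $\int_{\mathbb{R}}|v|^{q+1}\,dx \leq \|v\|_{H^{\sigma/2}}^2$ (which follows from the Nehari identity when $a=+1$ and $v>0$), to reduce the right-hand side to $C\varepsilon_c\|\breve{\phi}_c\|_{H^{\sigma/2}}^2$ and absorb it for large $c$. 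Second, the uniqueness theorem of \cite{Frank-Lenzmann} is up to translation, so you must invoke the evenness and monotonicity in $|x|$ inherited by the weak limit to conclude that the limit is $\psi_{1,q}$ itself rather than a translate. Neither point is a gap in the idea, only bookkeeping; your proof, once these are written out, is a self-contained substitute for the cited lemma.
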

\begin{proof}
    See \cite[Lemma 4.3]{kokubu2025stability}.
\end{proof}

Here we let $\phi_{c}$ be one of ground state solutions to \eqref{eq:Intro_dSPc;gkdv_inst} obtained in Theorem \ref{thm:Intro_dSPc_previous;gkdv_inst}, and $\phi_{c}^{\lambda}$ be a function given by \eqref{eq:Instability_0010;gkdv_inst} for $\lambda > 0$.
By a direct calculation, we can see that
\begin{equation}
    S_{c}(\phi_{c}^{\lambda}) = \frac{\lambda^{\sigma}}{2} \| D_{x}^{\sigma/2} \phi_{c} \|_{L^{2}}^{2} + \frac{c}{2} \| \phi_{c} \|_{L^{2}}^{2} - \frac{a \lambda^{(p-1)/2}}{p+1} \int_{\mathbb{R}} \phi_{c}^{p+1}\, dx - \frac{\lambda^{(q-1)/2}}{q+1} \int_{\mathbb{R}} \phi_{c}^{q+1} \, dx.
\end{equation}
Differentiating this with respect to $\lambda$, we obtain
\begin{align}
    0 =& \langle S_{c}'(\phi_{c}), \Lambda \phi_{c} \rangle = \partial_{\lambda} S_{c}(\phi_{c}^{\lambda}) |_{\lambda=1} \\
    &= \frac{\sigma}{2} \| D_{x}^{\sigma/2} \phi_{c} \|_{L^{2}}^{2} - \frac{a(p-1)}{2(p+1)} \int_{\mathbb{R}} \phi_{c}^{p+1} \, dx - \frac{q-1}{2(q+1)} \int_{\mathbb{R}} \phi_{c}^{q+1} \, dx, \label{eq:Main_0040;gkdv_inst}
\end{align}
and
\begin{align}
    & \langle S_{c}''(\phi_{c}) \Lambda \phi_{c}, \Lambda \phi_{c} \rangle = \partial_{\lambda}^{2} S_{c}(\phi_{c}^{\lambda}) |_{\lambda=1} \\
    &= \frac{\sigma (\sigma - 1)}{2} \| D_{x}^{\sigma/2} \phi_{c} \|_{L^{2}}^{2} - \frac{a(p-1)(p-3)}{4(p+1)} \int_{\mathbb{R}} \phi_{c}^{p+1} \, dx - \frac{(q-1)(q-3)}{4(q+1)} \int_{\mathbb{R}} \phi_{c}^{q+1} \, dx. \label{eq:Main_0050;gkdv_inst}
\end{align}
Combining \eqref{eq:Main_0040;gkdv_inst} and \eqref{eq:Main_0050;gkdv_inst} yields
\begin{equation}
    \partial_{\lambda}^{2} S_{c}(\phi_{c}^{\lambda}) |_{\lambda=1} = \frac{a(p-1)(2\sigma + 1 - p)}{4(p+1)} \int_{\mathbb{R}} \phi_{c}^{p+1} \, dx + \frac{(q-1)(2\sigma + 1 - q)}{4(q+1)} \int_{\mathbb{R}} \phi_{c}^{q+1} \, dx. \label{eq:Main_0060;gkdv_inst}
\end{equation}

First we assume condition (I) in Theorem \ref{thm:Intro_dSPc_previous;gkdv_inst}, where $a = +1$ and $q$ is odd.
\begin{lem} \label{lem:Main_SuffCondUnderI;gkdv_inst}
    Let $a = +1$, $q$ is odd, and $\phi_{c}$ be a positive ground state solution to \eqref{eq:Intro_dSPc;gkdv_inst} for $c > 0$.
    Then the followings hold:
    \begin{enumerate}[label=\textup{(\roman*)} \ ]
        \item If $2\sigma + 1 \leq p < q$, then $\partial_{\lambda}^{2} S_{c}(\phi_{c}^{\lambda}) |_{\lambda=1} < 0$ holds for all $c > 0$.
        \item If $p < 2\sigma + 1 < q$, then there exists $c_{1} > 0$ such that $\partial_{\lambda}^{2} S_{c}(\phi_{c}^{\lambda}) |_{\lambda=1} < 0$ holds for all $c \in (c_{1}, \infty)$.
    \end{enumerate}
\end{lem}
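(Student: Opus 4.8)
The plan is to argue directly from the explicit expression \eqref{eq:Main_0060;gkdv_inst} for $\partial_{\lambda}^{2} S_{c}(\phi_{c}^{\lambda})|_{\lambda=1}$. Under condition (I) we have $a = +1$ and the ground state $\phi_{c}$ is positive, so both integrals $\int_{\mathbb{R}} \phi_{c}^{p+1}\,dx$ and $\int_{\mathbb{R}} \phi_{c}^{q+1}\,dx$ are strictly positive and all the combinatorial prefactors $(p-1)/(4(p+1))$, $(q-1)/(4(q+1))$ are positive. Consequently the sign of each of the two summands is determined solely by the factors $2\sigma+1-p$ and $2\sigma+1-q$, respectively. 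For part (i), where $2\sigma+1 \le p < q$, I would simply note that $2\sigma+1-p \le 0$ and $2\sigma+1-q < 0$; hence the first summand is $\le 0$ and the second is strictly negative, so their sum is strictly negative for every $c > 0$. No asymptotics are needed here.

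Part (ii) is the substantive case. When $p < 2\sigma+1 < q$ the first summand is strictly positive and the second strictly negative, so the sign is a priori undetermined and must be decided by comparing the two terms as $c \to \infty$. Here I would exploit the scaling \eqref{eq:Main_0030;gkdv_inst}, $\phi_{c}(x) = c^{1/(q-1)} \breve{\phi}_{c}(c^{1/\sigma}x)$. A change of variables $y = c^{1/\sigma}x$ yields, for $r \in \{p, q\}$,
\[
    \int_{\mathbb{R}} \phi_{c}^{r+1}\,dx = c^{(r+1)/(q-1) - 1/\sigma} \int_{\mathbb{R}} \breve{\phi}_{c}^{r+1}\,dy .
\]
Substituting into \eqref{eq:Main_0060;gkdv_inst} and factoring out the larger power $c^{(q+1)/(q-1) - 1/\sigma}$ gives
\[
    \partial_{\lambda}^{2} S_{c}(\phi_{c}^{\lambda})|_{\lambda=1} = c^{(q+1)/(q-1) - 1/\sigma} \Big[ c^{-(q-p)/(q-1)} \frac{(p-1)(2\sigma+1-p)}{4(p+1)} \int_{\mathbb{R}} \breve{\phi}_{c}^{p+1}\,dy + \frac{(q-1)(2\sigma+1-q)}{4(q+1)} \int_{\mathbb{R}} \breve{\phi}_{c}^{q+1}\,dy \Big].
\]
Since $q > p$, the exponent $(q-p)/(q-1)$ is positive, so the first bracketed term carries a vanishing factor $c^{-(q-p)/(q-1)} \to 0$ as $c \to \infty$.

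The decisive step is to pass to the limit inside the brackets. By Lemma~\ref{lem:Main_ConvofGS;gkdv_inst}\,(I), $\breve{\phi}_{c} \to \psi_{1, q}$ in $H^{\sigma/2}(\mathbb{R})$ as $c \to +\infty$; because $\sigma \ge 1$, i.e.\ $\sigma/2 \ge 1/2$, the Sobolev embedding $H^{\sigma/2}(\mathbb{R}) \hookrightarrow L^{r}(\mathbb{R})$ holds for every finite $r \ge 2$, so $\int_{\mathbb{R}} \breve{\phi}_{c}^{p+1}\,dy \to \int_{\mathbb{R}} \psi_{1, q}^{p+1}\,dy$ and $\int_{\mathbb{R}} \breve{\phi}_{c}^{q+1}\,dy \to \int_{\mathbb{R}} \psi_{1, q}^{q+1}\,dy$, both finite and positive. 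Hence the bracket converges to $\frac{(q-1)(2\sigma+1-q)}{4(q+1)} \int_{\mathbb{R}} \psi_{1, q}^{q+1}\,dy$, which is strictly negative since $q > 2\sigma+1$. As the prefactor $c^{(q+1)/(q-1) - 1/\sigma}$ is positive, there exists $c_{1} > 0$ such that $\partial_{\lambda}^{2} S_{c}(\phi_{c}^{\lambda})|_{\lambda=1} < 0$ for all $c \in (c_{1}, \infty)$, as claimed.

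The main obstacle I anticipate is precisely this limit passage: one must confirm that the $H^{\sigma/2}$-convergence of $\breve{\phi}_{c}$ upgrades to convergence of the $L^{p+1}$- and $L^{q+1}$-norms, which is exactly where the standing hypothesis $\sigma \ge 1$ and the Sobolev embedding are used. Once this is secured, the remaining comparison of the powers of $c$ is routine bookkeeping.
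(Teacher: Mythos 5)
Your proposal is correct and follows essentially the same route as the paper: part (i) by inspection of the signs in \eqref{eq:Main_0060;gkdv_inst}, and part (ii) by combining the scaling \eqref{eq:Main_0030;gkdv_inst} with the convergence $\breve{\phi}_{c} \rightarrow \psi_{1,q}$ from Lemma \ref{lem:Main_ConvofGS;gkdv_inst}; the paper phrases this as a smallness condition on the ratio $\| \phi_{c} \|_{L^{p+1}}^{p+1} / \| \phi_{c} \|_{L^{q+1}}^{q+1}$, which decays like $c^{-(q-p)/(q-1)}$, while you equivalently factor out the dominant power of $c$. Your explicit appeal to the Sobolev embedding $H^{\sigma/2}(\mathbb{R}) \hookrightarrow L^{r}(\mathbb{R})$ to pass to the limit in the $L^{p+1}$- and $L^{q+1}$-integrals is a detail the paper leaves implicit, and it is handled correctly.
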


\begin{proof}
    Under this condition, \eqref{eq:Main_0060;gkdv_inst} can be written as
    \begin{equation}
        \partial_{\lambda}^{2} S_{c}(\phi_{c}^{\lambda}) |_{\lambda=1} = \frac{(p-1)(2\sigma + 1 - p)}{4(p+1)} \| \phi_{c} \|_{L^{p+1}}^{p+1} + \frac{(q-1)(2\sigma + 1 - q)}{4(q+1)} \| \phi_{c} \|_{L^{q+1}}^{q+1}. \label{eq:Main_0070;gkdv_inst}
    \end{equation}

    If $2\sigma + 1 \leq p < q$, it is clear that the right hand side of \eqref{eq:Main_0070;gkdv_inst} is negative for all $c > 0$.

    Now we assume that $p < 2\sigma + 1 < q$.
    Then a direct calculation yields that
    \begin{equation}
        \partial_{\lambda}^{2} S_{c}(\phi_{c}^{\lambda}) |_{\lambda=1} < 0 \Longleftrightarrow \frac{\| \phi_{c} \|_{L^{p+1}}^{p+1}}{\| \phi_{c} \|_{L^{q+1}}^{q+1}} < \frac{p+1}{q+1} \cdot \frac{(q-1)\{ q - (2\sigma + 1) \}}{(p-1) (2\sigma + 1 - p)}. \label{eq:Main_0080;gkdv_inst}
    \end{equation}
    Using the scaling \eqref{eq:Main_0030;gkdv_inst}, we see that
    \begin{equation}
        \frac{\| \phi_{c} \|_{L^{p+1}}^{p+1}}{\| \phi_{c} \|_{L^{q+1}}^{q+1}} = c^{-\beta} \frac{\| \breve{\phi}_{c} \|_{L^{p+1}}^{p+1}}{\| \breve{\phi}_{c} \|_{L^{q+1}}^{q+1}} \label{eq:Main_0090;gkdv_inst}
    \end{equation}
    with $\beta = (q-p)/(q-1) > 0$.
    Then, by (i) of Lemma \ref{lem:Main_ConvofGS;gkdv_inst}, we can see that there exists $c_{1} > 0$ such that \eqref{eq:Main_0080;gkdv_inst} holds for $c \in (c_{1}, \infty)$.
\end{proof}

Next we assume condition (II-1) in Theorem \ref{thm:Intro_dSPc_previous;gkdv_inst}, where $a = -1$ and $p$ is odd.
\begin{lem} \label{lem:Main_SuffCondUnderII1;gkdv_inst}
    Let $a = -1$, $p$ is odd, and $\phi_{c}$ be a positive ground state solution to \eqref{eq:Intro_dSPc;gkdv_inst} for $c > 0$.
    \begin{enumerate}[label=\textup{(\roman*)} \ ]
        \item If $p < q = 2\sigma + 1$ or $p \leq 2\sigma + 1 < q$, then $\partial_{\lambda}^{2} S_{c}(\phi_{c}^{\lambda}) |_{\lambda=1} < 0$ holds for all $c > 0$.
    \item If $2\sigma + 1 < p < q$, then there exists $c_{2} > 0$ such that $\partial_{\lambda}^{2} S_{c}(\phi_{c}^{\lambda}) |_{\lambda=1} < 0$ holds for all $c \in (c_{2}, \infty)$.
    \end{enumerate}
\end{lem}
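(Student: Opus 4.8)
The plan is to start from the explicit formula \eqref{eq:Main_0060;gkdv_inst} for $\partial_{\lambda}^{2}S_{c}(\phi_{c}^{\lambda})|_{\lambda=1}$ and specialize it to the present situation, where $a = -1$, $p$ is odd, and $\phi_{c}$ is positive. Positivity of $\phi_{c}$ gives $\int_{\mathbb{R}}\phi_{c}^{p+1}\,dx = \|\phi_{c}\|_{L^{p+1}}^{p+1} > 0$ and $\int_{\mathbb{R}}\phi_{c}^{q+1}\,dx = \|\phi_{c}\|_{L^{q+1}}^{q+1} > 0$, so I would rewrite \eqref{eq:Main_0060;gkdv_inst} as
\begin{equation}
    \partial_{\lambda}^{2}S_{c}(\phi_{c}^{\lambda})|_{\lambda=1} = -\frac{(p-1)(2\sigma+1-p)}{4(p+1)}\|\phi_{c}\|_{L^{p+1}}^{p+1} + \frac{(q-1)(2\sigma+1-q)}{4(q+1)}\|\phi_{c}\|_{L^{q+1}}^{q+1}.
\end{equation}
The entire argument then reduces to tracking the signs of the two coefficients, which are controlled by the location of $p$ and $q$ relative to the critical exponent $2\sigma+1$.

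For part (i) I expect both subcases to follow by a direct sign inspection, with no scaling argument required. When $p \leq 2\sigma+1 < q$, the factor $2\sigma+1-p$ is nonnegative, so the first term is $\leq 0$, while $2\sigma+1-q < 0$ makes the second term strictly negative; the sum is therefore strictly negative. When $p < q = 2\sigma+1$, the second term vanishes because $2\sigma+1-q = 0$, whereas $2\sigma+1-p = q-p > 0$ makes the first term strictly negative, again giving a strictly negative sum. I would explicitly note these two degenerate boundary configurations (one coefficient vanishing) to confirm that strict negativity is preserved in each case.

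For part (ii) both $p, q > 2\sigma+1$, so now the first term is strictly positive and the second strictly negative; this is the only case needing a quantitative estimate. Following the structure of the proof of Lemma \ref{lem:Main_SuffCondUnderI;gkdv_inst}, I would rearrange the inequality $\partial_{\lambda}^{2}S_{c}(\phi_{c}^{\lambda})|_{\lambda=1} < 0$ into the equivalent ratio inequality
\begin{equation}
    \frac{\|\phi_{c}\|_{L^{p+1}}^{p+1}}{\|\phi_{c}\|_{L^{q+1}}^{q+1}} < \frac{p+1}{q+1}\cdot\frac{(q-1)\{q-(2\sigma+1)\}}{(p-1)\{p-(2\sigma+1)\}},
\end{equation}
whose right-hand side is a fixed positive constant precisely because $p, q > 2\sigma+1$. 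Then the scaling \eqref{eq:Main_0090;gkdv_inst} yields $\|\phi_{c}\|_{L^{p+1}}^{p+1}/\|\phi_{c}\|_{L^{q+1}}^{q+1} = c^{-\beta}\,\|\breve{\phi}_{c}\|_{L^{p+1}}^{p+1}/\|\breve{\phi}_{c}\|_{L^{q+1}}^{q+1}$ with $\beta = (q-p)/(q-1) > 0$, and (I) of Lemma \ref{lem:Main_ConvofGS;gkdv_inst}, applicable since $a = -1$ with $p$ odd produces a positive ground state, gives $\breve{\phi}_{c} \to \psi_{1,q}$ in $H^{\sigma/2}(\mathbb{R})$. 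Consequently the quotient $\|\breve{\phi}_{c}\|_{L^{p+1}}^{p+1}/\|\breve{\phi}_{c}\|_{L^{q+1}}^{q+1}$ converges to the finite positive limit $\|\psi_{1,q}\|_{L^{p+1}}^{p+1}/\|\psi_{1,q}\|_{L^{q+1}}^{q+1}$, while $c^{-\beta} \to 0$, so the left-hand side tends to $0$ and eventually falls below the positive constant; this furnishes the threshold $c_{2}$.

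The only delicate point is part (ii): unlike part (i), the two terms carry opposite signs, so a mere sign inspection does not suffice and I must quantify their competition through the scaling limit. Here the continuity of the embedding $H^{\sigma/2}(\mathbb{R}) \hookrightarrow L^{p+1}(\mathbb{R}) \cap L^{q+1}(\mathbb{R})$ guarantees that the norm ratio converges along $\breve{\phi}_{c} \to \psi_{1,q}$, and the strict positivity of $\beta = (q-p)/(q-1)$ is exactly what forces the decay of $c^{-\beta}$; I would verify these two facts explicitly and note that $\|\psi_{1,q}\|_{L^{q+1}}^{q+1} > 0$ so no division by zero occurs. The remaining estimates are routine and mirror the proof of Lemma \ref{lem:Main_SuffCondUnderI;gkdv_inst}.
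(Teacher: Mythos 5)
Your proposal is correct and follows essentially the same route as the paper: specialize \eqref{eq:Main_0060;gkdv_inst} to $a=-1$ with $\phi_{c}>0$ to get \eqref{eq:Main_0100;gkdv_inst}, settle part (i) by sign inspection of the two coefficients, and settle part (ii) by rearranging into the ratio inequality \eqref{eq:Main_0110;gkdv_inst} and invoking the scaling \eqref{eq:Main_0090;gkdv_inst} together with Lemma \ref{lem:Main_ConvofGS;gkdv_inst}, exactly as in the proof of Lemma \ref{lem:Main_SuffCondUnderI;gkdv_inst}. The extra details you supply (the vanishing boundary term when $q=2\sigma+1$, the embedding $H^{\sigma/2}(\mathbb{R})\hookrightarrow L^{p+1}(\mathbb{R})\cap L^{q+1}(\mathbb{R})$, and the nonvanishing of $\|\psi_{1,q}\|_{L^{q+1}}$) are precisely the steps the paper leaves implicit.
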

\begin{proof}
    In this case, \eqref{eq:Main_0060;gkdv_inst} can be written as
    \begin{equation}
        \partial_{\lambda}^{2} S_{c}(\phi_{c}^{\lambda}) |_{\lambda=1} = - \frac{(p-1)(2\sigma + 1 - p)}{4(p+1)} \| \phi_{c} \|_{L^{p+1}}^{p+1} + \frac{(q-1)(2\sigma + 1 - q)}{4(q+1)} \| \phi_{c} \|_{L^{q+1}}^{q+1}. \label{eq:Main_0100;gkdv_inst}
    \end{equation}

    If $p < q = 2\sigma + 1$ or $p \leq 2\sigma + 1 < q$, it can be easily seen that the right hand side of \eqref{eq:Main_0100;gkdv_inst} is negative for all $c > 0$, which implies that statement (i) holds.

    If $2\sigma + 1 < p < q$, we can see that
    \begin{equation}
        \partial_{\lambda}^{2} S_{c}(\phi_{c}^{\lambda}) |_{\lambda=1} < 0 \Longleftrightarrow \frac{\| \phi_{c} \|_{L^{p+1}}^{p+1}}{\| \phi_{c} \|_{L^{q+1}}^{q+1}} < \frac{p+1}{q+1} \cdot \frac{(q-1)\{ q - (2\sigma + 1) \}}{(p-1) \{ p - (2\sigma + 1) \}}. \label{eq:Main_0110;gkdv_inst}
    \end{equation}
    Thanks to the scaling \eqref{eq:Main_0030;gkdv_inst} and Lemma \ref{lem:Main_ConvofGS;gkdv_inst}, we can show that there exists some $c_{2} > 0$ such that \eqref{eq:Main_0110;gkdv_inst} holds for all $c \in (c_{2}, \infty)$ with similar way to the proof of Lemma \ref{lem:Main_SuffCondUnderI;gkdv_inst}.
\end{proof}

Finally, we assume condition (II-2) in Theorem \ref{thm:Intro_dSPc_previous;gkdv_inst}, where $a = -1$, $p$ is even, and $q$ is odd.
In this case, we can obtain a negative ground state solution $\phi_{c}$ to \eqref{eq:Intro_dSPc;gkdv_inst}.
Noting that $\phi_{c}$ is negative, we can rewrite \eqref{eq:Main_0060;gkdv_inst} as
\begin{equation}
    \partial_{\lambda}^{2} S_{c}(\phi_{c}^{\lambda}) |_{\lambda=1} = \frac{(p-1)(2\sigma + 1 - p)}{4(p+1)} \| \phi_{c} \|_{L^{p+1}}^{p+1} + \frac{(q-1)(2\sigma + 1 - q)}{4(q+1)} \| \phi_{c} \|_{L^{q+1}}^{q+1},
\end{equation}
which coincides with \eqref{eq:Main_0070;gkdv_inst}.
Therefore, with almost the same way as Lemma \ref{lem:Main_SuffCondUnderI;gkdv_inst}, we can show the following statement.
\begin{lem} \label{lem:Main_SuffCondUnderII2;gkdv_inst}
    Let $a = -1$, $p$ is even, $q$ is odd, and $\phi_{c}$ be a negative ground state solution to \eqref{eq:Intro_dSPc;gkdv_inst} for $c > 0$.

    \begin{enumerate}[label=\textup{(\roman*)} \ ]
        \item If $2\sigma + 1 \leq p < q$, then $\partial_{\lambda}^{2} S_{c}(\phi_{c}^{\lambda}) |_{\lambda=1} < 0$ holds for all $c > 0$.
        \item If $p < 2\sigma + 1 < q$, then there exists $c_{3} > 0$ such that $\partial_{\lambda}^{2} S_{c}(\phi_{c}^{\lambda}) |_{\lambda=1} < 0$ holds for all $c \in (c_{3}, \infty)$.
    \end{enumerate}
\end{lem}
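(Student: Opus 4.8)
The plan is to exploit the fact, recorded in the display immediately preceding the lemma, that under condition (II-2) the expression \eqref{eq:Main_0060;gkdv_inst} for $\partial_{\lambda}^{2}S_{c}(\phi_{c}^{\lambda})|_{\lambda=1}$ reduces to exactly the same form \eqref{eq:Main_0070;gkdv_inst} that governs case (I). The one point demanding care in this reduction is the sign bookkeeping: since $p$ is even, $p+1$ is odd, and since $\phi_{c}$ is negative, one has $\int_{\mathbb{R}}\phi_{c}^{p+1}\,dx = -\|\phi_{c}\|_{L^{p+1}}^{p+1}$, so that together with $a=-1$ the $p$-term coefficient $a(p-1)(2\sigma+1-p)/(4(p+1))$ turns into the positive-sign version $(p-1)(2\sigma+1-p)/(4(p+1))$ multiplying $\|\phi_{c}\|_{L^{p+1}}^{p+1}$; since $q$ is odd, $q+1$ is even and $\int_{\mathbb{R}}\phi_{c}^{q+1}\,dx = \|\phi_{c}\|_{L^{q+1}}^{q+1}$, so the $q$-term is unchanged. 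Once this reduction is in place, the whole argument runs in close parallel to the proof of Lemma \ref{lem:Main_SuffCondUnderI;gkdv_inst}.

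For part (i), I would simply observe that when $2\sigma+1 \leq p < q$ both factors $(2\sigma+1-p)$ and $(2\sigma+1-q)$ are nonpositive, and the latter is strictly negative because $q > p \geq 2\sigma+1$ forces $q > 2\sigma+1$. Hence both summands of \eqref{eq:Main_0070;gkdv_inst} are $\leq 0$, with the $q$-summand strictly negative, so $\partial_{\lambda}^{2}S_{c}(\phi_{c}^{\lambda})|_{\lambda=1} < 0$ for every $c > 0$.

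For part (ii), with $p < 2\sigma+1 < q$ the $p$-summand is positive and the $q$-summand negative, so I would rewrite the target inequality $\partial_{\lambda}^{2}S_{c}(\phi_{c}^{\lambda})|_{\lambda=1} < 0$ as a bound on the ratio $\|\phi_{c}\|_{L^{p+1}}^{p+1}/\|\phi_{c}\|_{L^{q+1}}^{q+1}$, exactly as in \eqref{eq:Main_0080;gkdv_inst}. Applying the scaling \eqref{eq:Main_0030;gkdv_inst} gives
\[
\frac{\|\phi_{c}\|_{L^{p+1}}^{p+1}}{\|\phi_{c}\|_{L^{q+1}}^{q+1}} = c^{-\beta}\,\frac{\|\breve{\phi}_{c}\|_{L^{p+1}}^{p+1}}{\|\breve{\phi}_{c}\|_{L^{q+1}}^{q+1}}, \qquad \beta = \frac{q-p}{q-1} > 0.
\]
The relevant convergence input here is part (II) of Lemma \ref{lem:Main_ConvofGS;gkdv_inst}, namely $\breve{\phi}_{c} \to \chi_{1,q} = -\psi_{1,q}$ in $H^{\sigma/2}(\mathbb{R})$; by the continuity of the $L^{p+1}$ and $L^{q+1}$ norms under $H^{\sigma/2}$-convergence (Sobolev embedding), the ratio on the right tends to the finite positive constant $\|\psi_{1,q}\|_{L^{p+1}}^{p+1}/\|\psi_{1,q}\|_{L^{q+1}}^{q+1}$, where the sign of $\chi_{1,q}$ is immaterial for these even powers. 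Multiplying by $c^{-\beta} \to 0$ drives the left-hand side to $0$, which lies strictly below the positive right-hand threshold, so the ratio inequality, and therefore $\partial_{\lambda}^{2}S_{c}(\phi_{c}^{\lambda})|_{\lambda=1} < 0$, holds for all sufficiently large $c$, yielding the desired $c_{3}$.

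Since every step is a direct transcription of the case-(I) argument, I do not expect a substantive obstacle; the only place that genuinely requires attention is the initial sign reduction to \eqref{eq:Main_0070;gkdv_inst}, where the evenness of $p$, the oddness of $q$, the negativity of $\phi_{c}$, and $a=-1$ must be combined correctly so that both integral terms reappear with positive coefficients multiplying the norms.
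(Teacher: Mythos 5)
Your proposal is correct and follows essentially the same route as the paper: the paper likewise uses the negativity of $\phi_{c}$, the evenness of $p$, the oddness of $q$, and $a=-1$ to rewrite \eqref{eq:Main_0060;gkdv_inst} in the form \eqref{eq:Main_0070;gkdv_inst}, and then repeats the case-(I) argument (sign inspection for (i); the ratio inequality \eqref{eq:Main_0080;gkdv_inst}, the scaling \eqref{eq:Main_0090;gkdv_inst}, and part (II) of Lemma \ref{lem:Main_ConvofGS;gkdv_inst} for (ii)). Your explicit sign bookkeeping is exactly the step the paper summarizes as ``noting that $\phi_{c}$ is negative,'' and the rest matches the paper's proof of Lemma \ref{lem:Main_SuffCondUnderI;gkdv_inst}.
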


Finally, we conclude Theorem \ref{thm:Intro_Main_Instability;gkdv_inst} by Proposition \ref{prop:Instability_Sufficient;gkdv_inst}, and Lemmas \ref{lem:Main_SuffCondUnderI;gkdv_inst}, \ref{lem:Main_SuffCondUnderII1;gkdv_inst}, and \ref{lem:Main_SuffCondUnderII2;gkdv_inst}.

\section{Decay estimate of ground state solutions} \label{section:DecayEst_of_GS;gkdv_inst}

In this section, we prove that any ground state solution to \eqref{eq:Intro_dSPc;gkdv_inst} has a polynomial decay at infinity when $1 \leq \sigma < 2$.
Actually, we can obtain the desired estimate for any nontrivial solutions as follows:

\begin{thm} \label{prop:DecayEst;gkdv_inst}
    Let $1 \leq \sigma < 2$ and $\phi \in H^{\sigma/2}(\mathbb{R})$ be a nontrivial solution to \eqref{eq:Intro_dSPc;gkdv_inst}.
    Then it holds that, for any $l \in \mathbb{Z}_{+}$, there exists some $C > 0$ such that
    \begin{equation}
        |\partial_{x}^{l} \phi(x)| \leq C \langle x \rangle^{-(1 + \sigma) - l} \label{eq:DecayEst_0010;gkdv_inst}
    \end{equation}
    for all $x \in \mathbb{R}$.
\end{thm}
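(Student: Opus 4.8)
The plan is to turn the stationary equation \eqref{eq:Intro_dSPc;gkdv_inst} into a convolution identity and then bootstrap the decay rate. Since any nontrivial solution already lies in $H^{\infty}(\mathbb{R})$ (as recorded in the remark following Theorem \ref{thm:Intro_dSPc_previous;gkdv_inst}), $\phi$ is smooth and $\phi,\partial_x\phi,\dots$ all tend to $0$ at infinity. Setting $F \coloneq a\phi^{p}+\phi^{q}$, the equation reads $(D_x^{\sigma}+c)\phi = F$, hence $\phi = G*F$, where $G$ is the Bessel-type kernel with $\widehat{G}(\xi)=(|\xi|^{\sigma}+c)^{-1}$. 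Because $p,q\ge 2$ and $\phi\in L^{2}\cap L^{\infty}$ vanishes at infinity, we have $F\in L^{1}\cap L^{\infty}$, $F\to 0$, and $|F(x)|\le C|\phi(x)|^{2}$ for $|x|$ large; more generally each $\partial_x^{j}F$ is a finite sum of products of at least two factors drawn from $\{\partial_x^{a}\phi\}$, a fact I will exploit for the derivative bounds.

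The heart of the argument, and the step I expect to be hardest, is the kernel estimate $|\partial_x^{l}G(x)|\le C_{l}\langle x\rangle^{-(1+\sigma)-l}$ for $|x|\ge 1$, together with $G>0$, $G\in L^{1}(\mathbb{R})$, and an integrable singularity of $G$ at the origin (logarithmic when $\sigma=1$). I would obtain these from the Fourier representation $G(x)=c_{0}\int_{\mathbb{R}}e^{ix\xi}(|\xi|^{\sigma}+c)^{-1}\,d\xi$ by splitting into low and high frequencies: on $|\xi|\gtrsim 1$ the symbol is smooth and decays like $|\xi|^{-\sigma}$, so repeated integration by parts yields rapid decay in $x$, while on $|\xi|\lesssim 1$ the only obstruction to smoothness is the singularity of $|\xi|^{\sigma}$ at $\xi=0$. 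Expanding $(|\xi|^{\sigma}+c)^{-1}=c^{-1}-c^{-2}|\xi|^{\sigma}+\cdots$ and using that the Fourier transform of the homogeneous distribution $|\xi|^{\sigma}$ is a multiple of $|x|^{-1-\sigma}$ produces exactly the tail $\langle x\rangle^{-(1+\sigma)}$, and differentiation in $x$ supplies the extra factors $|x|^{-l}$. This is the polynomial-decay computation carried out by Ria\~{n}o--Roudenko~\cite{Riano-Roudenko_2022}, which I would adapt to the present symbol.

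With the kernel in hand I would first settle the case $l=0$ by a self-improving iteration. The workhorse is the standard convolution estimate $\int_{\mathbb{R}}\langle x-y\rangle^{-\alpha}\langle y\rangle^{-\beta}\,dy\le C\langle x\rangle^{-\min(\alpha,\beta)}$, valid for $\alpha,\beta>0$ with $\max(\alpha,\beta)>1$ and $\alpha\neq\beta$ (a logarithm appears at equality, which I avoid by a harmless perturbation of exponents). An initial algebraic rate $|\phi(x)|\le C\langle x\rangle^{-\gamma_{0}}$ with $\gamma_{0}>0$ follows by feeding $F\in L^{1}$, $F\to 0$, and the kernel tail into a bootstrap for $n(t)\coloneq\sup_{|x|\ge t}|\phi(x)|$. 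Then, using $|F|\lesssim|\phi|^{2}$ together with the convolution estimate, the identity $\phi=G*F$ upgrades any rate $\gamma$ to $\min(1+\sigma,2\gamma)$; iterating doubles the exponent until it reaches $1+\sigma$, so that $|\phi(x)|\le C\langle x\rangle^{-(1+\sigma)}$ after finitely many steps, and hence $|F(x)|\le C\langle x\rangle^{-2(1+\sigma)}$.

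For $l\ge 1$ I would argue by induction, representing $\partial_x^{l}\phi=(\partial_x^{l}G)*F$ and splitting the convolution at $|y|=|x|/2$. On $|y|\le|x|/2$ one has $|\partial_x^{l}G(x-y)|\le C\langle x\rangle^{-(1+\sigma)-l}$, which factors out and is multiplied by $\|F\|_{L^{1}}$, giving the desired bound directly. The delicate region is $|y|>|x|/2$, which contains the diagonal $y\approx x$ where $\partial_x^{l}G$ is large; there a naive estimate only yields $\langle x\rangle^{-2(1+\sigma)}$, which is too weak once $l>1+\sigma$. The remedy is to integrate by parts on this region, transferring derivatives from $G$ onto $F$ (the boundary terms and the vanishing moments $\int_{\mathbb{R}}\partial_x^{j}G\,dx=0$ for $j\ge1$ encode the required cancellation), and then to invoke the induction hypothesis: every term of $\partial_x^{l}F$ is a product of at least two factors $\partial_x^{a}\phi$ with $\sum a=l$, so $|\partial_x^{l}F(x)|\le C\langle x\rangle^{-2(1+\sigma)-l}$ (the single top-order factor $\partial_x^{l}\phi$ occurs only against the small coefficient $\phi^{p-1}$ and is absorbed). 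Convolving the mildly singular but integrable $G$ against this fast-decaying, moment-cancelling remainder then returns $|\partial_x^{l}\phi(x)|\le C\langle x\rangle^{-(1+\sigma)-l}$, closing the induction. Besides the kernel estimate, this transfer-of-derivatives step---beating the $\langle x\rangle^{-(1+\sigma)}$ convolution barrier through cancellation---is the other place where care is \emph{essential}.
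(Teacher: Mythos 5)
Your proposal follows the same Ria\~{n}o--Roudenko bootstrapping lineage as the paper --- the convolution identity \eqref{eq:DecayEst_00101;gkdv_inst}, kernel decay, the case $l=0$ first, then an induction for derivatives --- but your induction step uses a genuinely different mechanism. The paper never differentiates the kernel $G_{\sigma}$: it multiplies by the weight $x^{\kappa+1}$, moves the weight through the nonlocal operator via the commutator identity of Lemma \ref{lem:DecayEst_Commutator;gkdv_inst}, and estimates the resulting bounded-symbol kernels $K_{k,m}$ (Lemma \ref{lem:DecayEst_K_in_Commutator;gkdv_inst}); moreover, its induction (Lemma \ref{lem:DecayEst_nth_derivative_induction;gkdv_inst}) is on the weight $\kappa$, controlling \emph{all} derivative orders simultaneously with weight $\min\{l,\kappa\}$ and gaining only one power of $x$ per stage. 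You instead differentiate the kernel (your claim $|\partial_x^{l}G(x)|\le C\langle x\rangle^{-(1+\sigma)-l}$ for $|x|\ge 1$ is true, and provable by the low/high frequency splitting you describe) and split the convolution at $|y|=|x|/2$, inducting on $l$ itself. Your $l=0$ stage (doubling iteration) is sound and is an elementary alternative to the Amick--Toland contraction used for Proposition \ref{prop:DecayEst_0th_derivative;gkdv_inst}.

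The genuine gap is the step you dismiss as ``absorbed.'' Because you induct on $l$ and try to gain the full weight $l$ in one stage, $\partial_x^{l}F$ contains the term $(pa\phi^{p-1}+q\phi^{q-1})\partial_x^{l}\phi$, which is \emph{linear} in the unknown being estimated. A one-shot absorption --- subtracting a multiple of $\sup_{x}\langle x\rangle^{(1+\sigma)+l}|\partial_x^{l}\phi(x)|$ from both sides --- is circular, since that supremum is not known to be finite beforehand; and you cannot repair it by a contraction for the full map $v\mapsto G_{\sigma}\ast(\phi^{p-1}v)$, because by Lemma \ref{lem:DecayEst_FLS_ConvolutionLimit;gkdv_inst} convolution with $G_{\sigma}$ saturates at decay $\langle x\rangle^{-(1+\sigma)}$ no matter how fast the input decays, so this map does not preserve your weighted space. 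This is exactly the obstruction the paper's $\min\{l,\kappa\}$ double induction is designed to avoid: there the top derivative enters each stage only with the \emph{previous} weight, and the prefactor $\phi^{r-1}$, decaying like $\langle x\rangle^{-(r-1)(1+\sigma)}$ with $(r-1)(1+\sigma)\ge 2>1$, pays for the single new power of $x$ (this is the hypothesis $\beta-\gamma-\eta-(r-1)(1+\sigma)<0$ in Lemma \ref{lem:DecayEst_boundedness_dxdf;gkdv_inst}). Your scheme can be repaired --- either iterate \emph{within} stage $l$ (first get $|\partial_x^{l}\phi|\le C\langle x\rangle^{-(1+\sigma)}$, then each re-insertion improves the decay of the top-order product by $(p-1)(1+\sigma)$, reaching $(1+\sigma)+l$ after finitely many passes, each pass using only already-established finite bounds), or run an Amick--Toland type fixed point for the far-region operator only, or restructure as the paper's double induction --- but some such device must be supplied: as written, the induction does not close once $l>(p-1)(1+\sigma)$.

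A second, smaller defect: your integration by parts goes in the wrong direction. Starting from $(\partial_x^{l}G_{\sigma})\ast F$ and transferring derivatives onto $F$ over $|y|>|x|/2$ passes through integrals whose kernels $\partial_x^{j}G_{\sigma}$, $1\le j<l$, are not integrable across the diagonal $y=x$, which lies inside that region (indeed $(\partial_x^{l}G_{\sigma})\ast F$ is itself only a distributional expression, since $\partial_x^{j}G_{\sigma}$ has a nonintegrable singularity at the origin for $j\ge 2$, and already for $j=1$ when $\sigma=1$). The clean arrangement is the reverse: start from $\partial_x^{l}\phi=G_{\sigma}\ast\partial_x^{l}F$, keep all derivatives on $F$ over the far region, and integrate by parts to move derivatives onto $G_{\sigma}$ only over the near region $|y|\le|x|/2$, where $|x-y|\ge|x|/2$ keeps you away from the singularity; the boundary terms at $|y|=|x|/2$ are then controlled by the inductive hypothesis, and the vanishing moments of $\partial_x^{j}G_{\sigma}$ are not needed at all.
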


We prove Theorem \ref{prop:DecayEst;gkdv_inst} following the method in \cite{Riano-Roudenko_2022} with some modifications.
Hereafter, we always assume that $1 \leq \sigma < 2$.

Here we let $\phi_{c} \in H^{\sigma/2}(\mathbb{R})$ be a nontrivial solution to \eqref{eq:Intro_dSPc;gkdv_inst} and consider the function $\breve{\phi}$ obtained by the scaling \eqref{eq:Main_0030;gkdv_inst}.
Then we see that $\breve{\phi}_{c}$ solves the following equation:
\begin{equation}
    D_{x}^{\sigma} \breve{\phi} + \breve{\phi} - a c^{-\beta} \breve{\phi}^{p} - \breve{\phi}^{q} = 0,
\end{equation}
where $\beta = (q - p)/(q - 1) > 0$.
Therefore, it suffices to show that the estimate \eqref{eq:DecayEst_0010;gkdv_inst} holds for any nontrivial solutions $\phi \in H^{\sigma/2}(\mathbb{R})$ to the following equation:
\begin{equation}
    D_{x}^{\sigma} \phi + \phi - a_{1} \phi^{p} - a_{2} \phi^{q} = 0 \label{eq:DecayEst_ScaledEquation;gkdv_inst}
\end{equation}
with $a_{j} \in \mathbb{R}$ ($j = 1, 2$) are coefficients.

Now we introduce the function $G_{\sigma}$ defined as
\begin{equation}
    G_{\sigma}(x) \coloneq \frac{1}{2 \pi} \int_{\mathbb{R}} \frac{e^{i \xi x}}{1 + |\xi|^{\sigma}} \, dx = \frac{1}{\sqrt{2 \pi}} \mathscr{F}^{-1} \left[ \frac{1}{1 + |\xi|^{\sigma}} \right](x), \quad x \in \mathbb{R}.
\end{equation}
Moreover, we define the operator $1/(1 + D_{x}^{\sigma})$ as
\begin{equation}
    \frac{1}{1 + D_{x}^{\sigma}} f \coloneq G_{\sigma} \ast f,
\end{equation}
where $f$ is an appropriate function.
Then, a nontrivial solution $\phi$ to \eqref{eq:DecayEst_ScaledEquation;gkdv_inst} is written as
\begin{equation}
    \phi(x) = \left( G_{\sigma} \ast (a_{1} \phi^{p} + a_{2} \phi^{q}) \right)(x) = a_{1} \frac{1}{1 + D_{x}^{\sigma}} \phi^{p}(x) + a_{2} \frac{1}{1 + D_{x}^{\sigma}} \phi^{q}(x) \label{eq:DecayEst_00101;gkdv_inst}
\end{equation}
Here we recall some properties of $G_{\sigma}$.
\begin{lem} \label{lem:DecayEst_Properties_of_G;gkdv_inst}
    \begin{enumerate}[label=\textup{(\roman*)} \ ]
        \item $G_{\sigma} \in L^{1}(\mathbb{R})$.
        \item $G_{\sigma}$ is a positive and even function.
        \item There exists some $C_{\sigma} > 0$ such that $\lim_{|x| \rightarrow \infty} |x|^{1 + \sigma} G_{\sigma}(x) = C_{\sigma}$.
    \end{enumerate}
\end{lem}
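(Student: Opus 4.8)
The plan is to read off parts (i) and (ii) from a subordination representation of $G_\sigma$, and to obtain the precise tail (iii) by localizing the multiplier $1/(1+|\xi|^\sigma)$ in frequency and extracting its leading homogeneous singularity at $\xi = 0$.

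For (i) and (ii), I would start from the elementary identity $\frac{1}{1+|\xi|^\sigma} = \int_0^\infty e^{-t}e^{-t|\xi|^\sigma}\,dt$ and recognize $e^{-t|\xi|^\sigma}$ (up to the normalizing constant in the paper's Fourier convention) as the symbol of the fractional heat semigroup. Denoting by $p_t^\sigma$ the function with $\widehat{p_t^\sigma}(\xi) = (2\pi)^{-1/2}e^{-t|\xi|^\sigma}$, one has that $p_t^\sigma$ is, for every $0 < \sigma \le 2$, the density of a symmetric $\sigma$-stable law, hence strictly positive with $\int_{\mathbb{R}}p_t^\sigma\,dx = 1$ (classical; via the L\'evy--Khintchine representation). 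Applying the inverse Fourier transform to the identity above and interchanging the $t$- and $\xi$-integrations by Fubini yields the representation $G_\sigma(x) = \int_0^\infty e^{-t}p_t^\sigma(x)\,dt$. From this, positivity of $G_\sigma$ is immediate, and Tonelli's theorem gives $\|G_\sigma\|_{L^1} = \int_0^\infty e^{-t}\,dt = 1 < \infty$, which is (i). Evenness (and realness) in (ii) then follow at once from the fact that the multiplier $1/(1+|\xi|^\sigma)$ is real and even, so that $\overline{G_\sigma(x)} = G_\sigma(-x) = G_\sigma(x)$.

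For (iii), I would fix a cutoff $\chi \in C_c^\infty(\mathbb{R})$ with $\chi \equiv 1$ near $0$ and split $G_\sigma = G_\sigma^{\mathrm{low}} + G_\sigma^{\mathrm{high}}$ according to the decomposition $1 = \chi + (1-\chi)$ of the multiplier. The high-frequency part has symbol $(1-\chi(\xi))/(1+|\xi|^\sigma)$, which is $C^\infty$ and satisfies classical symbol bounds of order $-\sigma$; repeated integration by parts then shows that $G_\sigma^{\mathrm{high}}$ decays faster than any power of $|x|$, so it does not affect the asymptotics. For the low-frequency part I would use the expansion $\frac{1}{1+|\xi|^\sigma} = 1 - |\xi|^\sigma + \frac{|\xi|^{2\sigma}}{1+|\xi|^\sigma}$: multiplied by $\chi$, the constant term contributes a Schwartz function (rapid decay), the remainder is $O(|\xi|^{2\sigma})$ near the origin and hence contributes $O(|x|^{-1-2\sigma})$, and the middle term produces the leading behaviour through the classical homogeneous-distribution identity $\mathscr{F}[|\xi|^\sigma](x) = -2\Gamma(1+\sigma)\sin(\pi\sigma/2)\,|x|^{-1-\sigma}$ (valid for $0 < \sigma < 2$), the cutoff again altering this only by rapidly decaying terms. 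Collecting the pieces gives $|x|^{1+\sigma}G_\sigma(x) \to C_\sigma$ with $C_\sigma = \pi^{-1}\Gamma(1+\sigma)\sin(\pi\sigma/2)$, which is strictly positive for $1 \le \sigma < 2$.

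The main obstacle is the rigorous justification of (iii): one must make precise the extraction of the leading homogeneous singularity $|\xi|^\sigma$, which amounts to evaluating the conditionally convergent oscillatory integral $\int_0^\infty \eta^\sigma\cos\eta\,d\eta$ by analytic continuation of $\int_0^\infty\eta^{s-1}\cos\eta\,d\eta = \Gamma(s)\cos(\pi s/2)$ from the strip where it converges absolutely. Alongside this, the frequency-localization errors and the expansion remainder must be shown to be genuinely of lower order uniformly in $x$, which is where the integration-by-parts estimates for the smooth pieces do the real work. A subordination route to (iii) via the known stable-density tails $p_t^\sigma(x) \sim c\,t\,|x|^{-1-\sigma}$ is also conceivable, but integrating these asymptotics against $e^{-t}$ uniformly for large $t$ is itself delicate, so the Fourier-localization argument seems the cleaner path.
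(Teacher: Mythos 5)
Your subordination argument for (i)--(ii) is elegant and correct on $1 < \sigma < 2$, and it is genuinely different from the paper's treatment (the paper cites (ii) and (iii) to the literature and proves only (i), concentrating its effort on $\sigma = 1$). But $\sigma = 1$ is exactly where your proof has a gap: the Fubini interchange you invoke is not justified there, since
\begin{equation}
\int_{\mathbb{R}}\int_{0}^{\infty} \bigl| e^{i x\xi}\, e^{-t}\, e^{-t|\xi|}\bigr| \,dt\,d\xi \;=\; \int_{\mathbb{R}} \frac{d\xi}{1+|\xi|} \;=\; \infty,
\end{equation}
so the double integral is not absolutely convergent. Worse, for $\sigma=1$ the defining integral $\frac{1}{2\pi}\int_{\mathbb{R}} e^{i\xi x}(1+|\xi|)^{-1}\,d\xi$ is itself not absolutely convergent, so ``applying the inverse Fourier transform'' pointwise has no Lebesgue meaning: $G_{1}$ must be interpreted as an improper (oscillatory) integral or as a tempered distribution. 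Note also that $G_{1}$ is unbounded, with a logarithmic singularity at the origin --- this is precisely what the paper's proof exhibits --- so for $\sigma=1$ one cannot sidestep the issue by combining boundedness of $G_\sigma$ with the tail bound (iii), as one can for $1<\sigma<2$. By contrast, the paper works directly with the conditionally convergent integral $g_{1}(x)=\int_{0}^{\infty}\cos(\xi x)(1+\xi)^{-1}\,d\xi$, proving $g_{1}(x)\le C x^{-2}$ for $x\ge 1$ by integration by parts and $g_{1}(x)\sim \log(1/x)$ as $x\to 0^{+}$, whence $g_{1}\in L^{1}(\mathbb{R})$.

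The gap is repairable, but the repair is the real content at $\sigma=1$: for instance, insert a damping factor $e^{-\varepsilon|\xi|}$, apply Fubini for each fixed $\varepsilon>0$ (now legitimate) to get $\frac{1}{2\pi}\int_{\mathbb{R}} e^{i\xi x-\varepsilon|\xi|}(1+|\xi|)^{-1}\,d\xi=\int_{0}^{\infty}e^{-t}\,p^{1}_{t+\varepsilon}(x)\,dt$, and then let $\varepsilon\to 0$, using the explicit Poisson kernel on the right and Abel summation of the improper integral on the left; alternatively, identify $G_{1}=\int_{0}^{\infty}e^{-t}p^{1}_{t}\,dt$ distributionally by testing against Schwartz functions, where Fubini applies because $\int_{0}^{\infty}e^{-t}e^{-t|\xi|}\,dt\le 1$. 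Without some such regularization your proof covers only $1<\sigma<2$ and misses the one case the paper's proof exists to handle. Your plan for (iii) --- high/low frequency splitting, rapid decay of the high part by integration by parts, and the homogeneous-distribution identity for $\mathscr{F}[|\xi|^{\sigma}]$ giving the constant $C_{\sigma}=\pi^{-1}\Gamma(1+\sigma)\sin(\pi\sigma/2)$ --- is a standard and workable route, though, as you acknowledge, the remainder estimates carry the real weight; the paper avoids all of this by citation.
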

The second and third properties have been observed by many authors (see e.g.\ \cite{Frank-Lenzmann}).
Moreover, the first property for the case that $1 < \sigma < 2$ is easily seen because the function $\mathbb{R} \ni \xi \mapsto 1/(1 + |\xi|^{\sigma})$ is bounded and continuous in $\mathbb{R}$, and belongs to $L^{1}(\mathbb{R})$.
However, the first property for $\sigma = 1$ is not immediate.
Therefore, we give the proof of it.
\begin{proof}[Proof of (i) for $\sigma = 1$]
    We shall observe the asymptotic behaviour of $G_{1}$ at infinity and at the origin.
    Due to the evenness of $G_{1}$, we may consider the function
    \begin{equation}
        g_{1}(x) \coloneq \int_{0}^{\infty} \frac{\cos(\xi x)}{1 + \xi} \, d\xi
    \end{equation}
    for $x > 0$.

    \underline{Step 1.} Asymptotic behaviour at infinity.

    In this step, we will show that $g_{1}(x) \leq C/x^{2}$ holds for $x \geq 1$.
    
    Integrating by parts, we obtain
    \begin{align}
        x g_{1}(x) &= \int_{0}^{\infty} \frac{\partial_{\xi}\{ \sin(\xi x) \}}{1 + \xi} \, d\xi = \int_{0}^{\infty} \frac{\sin(\xi x)}{(1 + \xi)^{2}} \, d\xi, \\
        x^{2} g_{1}(x) &= \int_{0}^{\infty} \frac{\partial_{\xi}\{1 - \cos(\xi x)\}}{(1 + \xi)^{2}} \, d\xi = 2 \int_{0}^{\infty} \frac{1 - \cos(\xi x)}{(1 + \xi)^{3}} \, d\xi. \label{eq:DecayEst_00102;gkdv_inst}
    \end{align}
    For $x \geq 1$.
    Moreover, for $x \geq 1$, we see that
    \begin{align}
        \left| \int_{0}^{\infty} \frac{\cos(\xi x)}{(1 + \xi)^{3}} \, dx \right| &= \left| \frac{1}{x} \int_{0}^{\infty} \frac{\partial_{\xi} \{ \sin(\xi x) \}}{(1 + \xi)^{3}} \, d\xi \right| \\
        &= \left| \frac{3}{x} \int_{0}^{\infty} \frac{\sin(\xi x)}{(1 + \xi)^{4}} \, d\xi \right| \\
        &\leq \frac{3}{x} \int_{0}^{\infty} \frac{d \xi}{(1 + \xi)^{4}} = \frac{C}{x}. \label{eq:DecayEst_00103;gkdv_inst}
    \end{align}
    Combining \eqref{eq:DecayEst_00102;gkdv_inst} with \eqref{eq:DecayEst_00103;gkdv_inst} gives that $g_{1}(x) \leq C/x^{2}$ holds for $x \geq 1$.

    \underline{Step 2.} Asymptotic behaviour at the origin.

    In this step, we will show that
    \begin{equation}
        \lim_{x \rightarrow +0} \frac{g_{1}(x)}{\log(1/x)} = 1.
    \end{equation}

    Let $x \in (0, 1)$.
    Then we see that
    \begin{align}
        g_{1}(x) &= \int_{0}^{\pi/x} \frac{d \xi}{1 + \xi} + \int_{0}^{\pi/x} \frac{\cos(\xi x) - 1}{1 + \xi} \, d\xi + \int_{\pi/x}^{\infty} \frac{\cos(\xi x)}{1 + \xi} \, d\xi \\
        &= \log \left( 1 + \frac{\pi}{x} \right) - \int_{0}^{\pi/x} \frac{1 - \cos(\xi x)}{1 + \xi} \, d\xi + \int_{\pi/x}^{\infty} \frac{\cos(\xi x)}{1 + \xi} \, d\xi. \label{eq:DecayEst_00104;gkdv_inst}
    \end{align}
    Since $0 \leq 1 - \cos \theta \leq \theta^{2}/2$ for $\theta \in \mathbb{R}$, we have
    \begin{align}
        0 \leq \int_{0}^{\pi/x} \frac{1 - \cos(\xi x)}{1 + \xi} \, d\xi &= \int_{0}^{\pi} \frac{1 - \cos \theta}{x + \theta} \, d\theta \\
        & \leq \int_{0}^{\pi} \frac{\theta^{2}}{2(x + \theta)} \, d\theta \leq \int_{0}^{\pi} \frac{\theta}{2} \, d\theta = \frac{\pi}{4}. \label{eq:DecayEst_00105;gkdv_inst}
    \end{align}
    Moreover, integrating by parts, we see that
    \begin{align}
        \left| \int_{\pi/x}^{\infty} \frac{\cos(\xi x)}{1 + \xi} \, d\xi \right| &= \left| \int_{\pi/x}^{\infty} \frac{\sin(\xi x)}{x(1 + \xi)^{2}} \, d\xi \right| \\
        &= \left| \int_{\pi}^{\infty} \frac{\sin \theta}{(x + \theta)^{2}} \, d\theta \right| \leq \int_{\pi}^{\infty} \frac{d\theta}{\theta^{2}} = \frac{1}{\pi}. \label{eq:DecayEst_00106;gkdv_inst}
    \end{align}
    Therefore, \eqref{eq:DecayEst_00104;gkdv_inst}, \eqref{eq:DecayEst_00105;gkdv_inst}, and \eqref{eq:DecayEst_00106;gkdv_inst} yields that
    \begin{equation}
        \left| g_{1}(x) - \log \frac{1}{x} \right| \leq \log(\pi + x) + \frac{\pi^{2}}{4} + \frac{1}{\pi} \leq \log (\pi + 1) + \frac{\pi^{2}}{4} + \frac{1}{\pi} = C
    \end{equation}
    for $x \in (0, 1)$.
    Namely, we see that
    \begin{equation}
        \left| \frac{g_{1}(x)}{\log(1/x)} - 1 \right| \leq \frac{C}{\log(1/x)} \rightarrow 0
    \end{equation}
    as $x \rightarrow +0$.

    Finally, these asymptotic behaviours give that $g_{1} \in L^{1}(\mathbb{R})$.
    
    Hence, the proof is accomplished.
\end{proof}

From the third property of Lemma \ref{lem:DecayEst_Properties_of_G;gkdv_inst}, it immediately follows that
\begin{equation}
    G_{\sigma}(x) \leq C |x|^{-(1 + \sigma)} \label{eq:DecayEst_0011;gkdv_inst}
\end{equation}
for $|x| > 1$ with some constant $C > 0$.

\begin{lem} \label{lem:DecayEst_H_infinity;gkdv_inst}
    Let $\phi \in H^{\sigma/2}(\mathbb{R})$ be a nontrivial solution to \eqref{eq:DecayEst_ScaledEquation;gkdv_inst}.
    Then it holds that $\phi \in H^{\infty}(\mathbb{R})$.
\end{lem}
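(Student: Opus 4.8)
The plan is to run a standard elliptic bootstrap based on the convolution representation \eqref{eq:DecayEst_00101;gkdv_inst}, exploiting the fact that the operator $(1 + D_{x}^{\sigma})^{-1}$, being the Fourier multiplier with symbol $(1 + |\xi|^{\sigma})^{-1}$, gains exactly $\sigma$ derivatives, i.e.\ it maps $H^{s}(\mathbb{R})$ boundedly into $H^{s+\sigma}(\mathbb{R})$ for every $s \geq 0$. Writing the equation as
\begin{equation}
    \phi = \frac{1}{1 + D_{x}^{\sigma}} \left( a_{1}\phi^{p} + a_{2}\phi^{q} \right),
\end{equation}
the whole argument reduces to showing that each level of regularity of $\phi$ forces the nonlinearity $a_{1}\phi^{p} + a_{2}\phi^{q}$ into a suitable Sobolev space, after which applying $(1 + D_{x}^{\sigma})^{-1}$ returns $\phi$ with $\sigma$ more derivatives.

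First I would treat the initial step, which is the only delicate point. A priori we only know $\phi \in H^{\sigma/2}(\mathbb{R})$, and since $\sigma \geq 1$ we have the embedding $H^{\sigma/2}(\mathbb{R}) \hookrightarrow H^{1/2}(\mathbb{R})$. In one dimension $H^{1/2}(\mathbb{R})$ embeds into $L^{r}(\mathbb{R})$ for every $r \in [2, \infty)$, though not into $L^{\infty}$; in particular $\phi \in L^{2p}(\mathbb{R}) \cap L^{2q}(\mathbb{R})$, whence $\phi^{p}, \phi^{q} \in L^{2}(\mathbb{R})$. Thus $a_{1}\phi^{p} + a_{2}\phi^{q} \in L^{2}(\mathbb{R}) = H^{0}(\mathbb{R})$, and the representation above gives $\phi \in H^{\sigma}(\mathbb{R})$.

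Once $\phi \in H^{\sigma}(\mathbb{R})$ with $\sigma \geq 1 > 1/2$, I would invoke the algebra property: for $s > 1/2$, the space $H^{s}(\mathbb{R})$ is closed under multiplication, with $\| fg \|_{H^{s}} \lesssim \| f \|_{H^{s}} \| g \|_{H^{s}}$. Hence if $\phi \in H^{n\sigma}(\mathbb{R})$ for some $n \in \mathbb{N}$, then, since $n\sigma \geq \sigma > 1/2$, the powers $\phi^{p}$ and $\phi^{q}$ again lie in $H^{n\sigma}(\mathbb{R})$, so the right-hand side is in $H^{n\sigma}(\mathbb{R})$ and the representation yields $\phi \in H^{(n+1)\sigma}(\mathbb{R})$. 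By induction $\phi \in H^{n\sigma}(\mathbb{R})$ for every $n$, and since $n\sigma \to \infty$ this gives $\phi \in H^{s}(\mathbb{R})$ for all $s \geq 0$, that is, $\phi \in H^{\infty}(\mathbb{R})$.

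The main obstacle is precisely this initial step: because $H^{\sigma/2}(\mathbb{R})$ is only borderline an algebra (at $\sigma = 1$ one has $\sigma/2 = 1/2$, where neither the multiplicative algebra property nor the $L^{\infty}$ embedding is available), I cannot close the nonlinear terms directly in $H^{\sigma/2}$. Passing through $L^{2}$ for the nonlinearity, using the full range of one-dimensional Sobolev embeddings $H^{1/2}(\mathbb{R}) \hookrightarrow L^{r}(\mathbb{R})$ with $r < \infty$, is what lifts $\phi$ into the regime $H^{\sigma}(\mathbb{R})$ where the clean algebra bootstrap takes over; all subsequent steps are then routine.
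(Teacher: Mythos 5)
Your proof is correct. Note that the paper itself does not prove this lemma in-text: it simply cites \cite[Lemma 2.6]{Kokubu2024}, so your argument fills in what the paper delegates to a reference, and it is the standard regularity bootstrap one would expect that citation to contain. The key steps all check out: the multiplier $(1+|\xi|^{\sigma})^{-1}$ does map $H^{s}(\mathbb{R})$ into $H^{s+\sigma}(\mathbb{R})$ boundedly, since $(1+|\xi|^{2})^{\sigma/2} \leq C(1+|\xi|^{\sigma})$; the initial step correctly handles the borderline case $\sigma = 1$ via the critical embedding $H^{1/2}(\mathbb{R}) \hookrightarrow L^{r}(\mathbb{R})$ for all finite $r \geq 2$, which gives $\phi^{p}, \phi^{q} \in L^{2}(\mathbb{R})$ and hence $\phi \in H^{\sigma}(\mathbb{R})$; and once $\phi \in H^{\sigma}(\mathbb{R})$ with $\sigma \geq 1 > 1/2$, the Banach algebra property of $H^{s}(\mathbb{R})$ for $s > 1/2$ closes the induction $\phi \in H^{n\sigma}(\mathbb{R}) \Rightarrow \phi \in H^{(n+1)\sigma}(\mathbb{R})$. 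You correctly identified that the only delicate point is the first step, where $H^{\sigma/2}$ is not an algebra when $\sigma = 1$, and your route through $L^{2p} \cap L^{2q}$ circumvents it cleanly; integer powers $p, q$ also spare you any issues with compositions $|u|^{p-1}u$ in fractional Sobolev spaces.
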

\begin{proof}
    See \cite[Lemma 2.6]{Kokubu2024}.
\end{proof}

Now we observe the spatial decay of a nontrivial solution to \eqref{eq:DecayEst_ScaledEquation;gkdv_inst}.
First, we observe the decay estimate of a nontrivial solution itself.

\begin{prop} \label{prop:DecayEst_0th_derivative;gkdv_inst}
    Let $\phi \in H^{\sigma/2}(\mathbb{R})$ be a nontrivial solution to \eqref{eq:DecayEst_ScaledEquation;gkdv_inst}.
    Then there exists some $C > 0$ such that
    \begin{equation}
        |\phi(x)| \leq C \langle x \rangle^{-(1 + \sigma)} \label{eq:DecayEst_0020;gkdv_inst}
    \end{equation}
    for all $x \in \mathbb{R}$.
\end{prop}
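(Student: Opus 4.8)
The plan is to exploit the convolution representation \eqref{eq:DecayEst_00101;gkdv_inst}, namely $\phi = G_{\sigma} \ast (a_{1}\phi^{p} + a_{2}\phi^{q})$, together with the pointwise decay of the kernel from \eqref{eq:DecayEst_0011;gkdv_inst} and the already-available regularity $\phi \in H^{\infty}(\mathbb{R})$ from Lemma \ref{lem:DecayEst_H_infinity;gkdv_inst}. The strategy is a bootstrap: I start from a crude decay rate of $\phi$ and feed it into the right-hand side to extract an improved rate, iterating until I reach the optimal exponent $1+\sigma$ dictated by the kernel.

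First I would establish a base decay. Since $\phi \in H^{\infty}(\mathbb{R}) \subset C^{\infty}(\mathbb{R})$ with all derivatives in $L^{2}$, the Sobolev embedding gives $\phi(x) \to 0$ as $|x| \to \infty$; more quantitatively, one can show some initial algebraic decay, say $|\phi(x)| \lesssim \langle x\rangle^{-s_{0}}$ for a small $s_{0}>0$, either by a direct argument or by a first pass through the convolution identity using only $\phi \in L^{\infty} \cap L^{2}$. The key iteration step is then: suppose $|\phi(x)| \leq C\langle x\rangle^{-\gamma}$ for some $0 < \gamma \leq 1+\sigma$. Because $p,q \geq 2$, the nonlinearity satisfies $|a_{1}\phi^{p} + a_{2}\phi^{q}| \lesssim \langle x\rangle^{-2\gamma}$, which decays strictly faster than $\phi$ itself. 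I would then split the convolution integral $\int_{\mathbb{R}} G_{\sigma}(x-y)\,|\phi(y)|^{2}\langle\cdots\rangle\,dy$ into the regions $|y| \leq |x|/2$ and $|y| > |x|/2$, using \eqref{eq:DecayEst_0011;gkdv_inst} on the kernel where $|x-y|$ is large and the decay of the nonlinearity where $|y|$ is large. A standard convolution-of-two-powers estimate of the form $\langle\cdot\rangle^{-(1+\sigma)} \ast \langle\cdot\rangle^{-2\gamma} \lesssim \langle\cdot\rangle^{-\min(1+\sigma,\,2\gamma)}$ (valid once both exponents exceed $1$, so integrability is not an issue) yields $|\phi(x)| \lesssim \langle x\rangle^{-\min(1+\sigma,\,2\gamma)}$. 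Iterating $\gamma \mapsto \min(1+\sigma, 2\gamma)$ starting from $\gamma = s_{0}$ reaches any value below $1+\sigma$ in finitely many steps, and a final application with $2\gamma \geq 1+\sigma$ caps the rate exactly at $1+\sigma$, giving \eqref{eq:DecayEst_0020;gkdv_inst}.

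The main obstacle I anticipate is making the convolution splitting rigorous and uniform across the iteration, in particular handling the region $|y| \leq |x|/2$ where the kernel decays but the integrated mass of $|\phi|^{2}$-type terms must be controlled, and ensuring the implicit constants do not blow up as $\gamma$ increases through the finitely many bootstrap stages. One must also verify that the borderline matching $2\gamma = 1+\sigma$ does not produce a logarithmic loss; this is avoided precisely because the kernel exponent $1+\sigma$ is reached from the nonlinearity side by an exponent $2\gamma$ that can be taken strictly larger than $1+\sigma$ at the last step (since $\gamma$ can be pushed just below $1+\sigma$ and doubled). A secondary technical point is justifying the base decay $s_{0}>0$; if a clean quantitative start is unavailable, one can instead run the iteration qualitatively, first proving $\langle x\rangle^{a}\phi(x) \to 0$ for increasing $a$ via dominated convergence in the convolution, before upgrading to the quantitative bound.
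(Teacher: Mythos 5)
Your iteration step itself is sound: once a bound $|\phi(x)| \leq C\langle x \rangle^{-\gamma}$ with some $\gamma > 0$ is available, the estimate $|a_{1}\phi^{p} + a_{2}\phi^{q}| \lesssim \langle x \rangle^{-2\gamma}$ together with \eqref{eq:DecayEst_0011;gkdv_inst} and the splitting into $|y| \leq |x|/2$ and $|y| > |x|/2$ does upgrade the exponent to $\min(1+\sigma, 2\gamma)$, and finitely many steps give \eqref{eq:DecayEst_0020;gkdv_inst}. The genuine gap is the initialization: you never obtain a strictly positive starting exponent $s_{0}$, and neither of your two suggestions for it works. A ``first pass through the convolution identity using only $\phi \in L^{\infty} \cap L^{2}$'' cannot produce algebraic decay, because the decay of $G_{\sigma} \ast h$, where $h = a_{1}\phi^{p} + a_{2}\phi^{q}$, is capped by the decay of $h$ itself, which at that stage is known only qualitatively --- this is precisely the circularity the bootstrap is meant to break. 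Your qualitative fallback fails for a structural reason: the improvement map $a \mapsto \min(1+\sigma, 2a)$ has $a = 0$ as a fixed point, so starting from the Sobolev information $\phi(x) \rightarrow 0$ (the case $a = 0$), the far region $|y| > |x|/2$ only contributes $o(1)$ with no rate, and the scheme produces no gain at all.

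What is missing is the mechanism the paper actually uses: not ``the nonlinearity squares the decay rate'' but ``the nonlinearity carries a small coefficient in the tail''. Writing $h = g(\phi)\phi$ with $g(\phi) = a_{1}\phi^{p-1} + a_{2}\phi^{q-1}$, one has $\sup_{|y| > X} |g(\phi(y))| \rightarrow 0$ as $X \rightarrow \infty$; the paper (following Amick--Toland) exploits this to show that $v \mapsto A_{\delta} + T_{\delta}[v]$ is a contraction on the weighted space $C_{1+\sigma, \delta}$ as well as on $C_{0,\delta}$, and uniqueness of the fixed point forces $\phi \in C_{1+\sigma,\delta}$, i.e.\ the full rate $1 + \sigma$ in one stroke, with no exponent iteration. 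An equivalent repair staying close to your splitting: set $M(X) \coloneq \sup_{|x| \geq X} |\phi(x)|$ and derive, for $X \geq 1$, the recursion $M(2X) \leq C X^{-(1+\sigma)} + \|G_{\sigma}\|_{L^{1}} \, \epsilon(X) M(X)$ with $\epsilon(X) \coloneq \sup_{|y| > X} |g(\phi(y))| \rightarrow 0$; iterating along dyadic values of $X$ yields $M(X) \lesssim X^{-(1+\sigma)}$ directly. Either way, the engine is the smallness of $g(\phi)$ at infinity (equivalently, the superlinearity of the nonlinearity acting on the sup-norm $M$, not on the exponent), which your proposal never invokes --- and without it the bootstrap cannot start.
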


Proposition \ref{prop:DecayEst_0th_derivative;gkdv_inst} can be shown with the similar method to Amick--Toland~\cite[pp.23--24]{Amick-Toland_1}.

Let $\phi \in H^{\sigma/2}(\mathbb{R})$ be a nontrivial solution to \eqref{eq:DecayEst_ScaledEquation;gkdv_inst} and $\delta > 0$ be arbitrary.
Noting that $\phi \in H^{\infty}(\mathbb{R})$, we can take $X_{\delta} > 0$ such that $|\phi(x)| < \delta$ holds for $|x| > X_{\delta}$.
For $\alpha \in \{ 0, 1 + \sigma \}$, we put
\begin{equation}
    C_{\alpha, \delta} \coloneq C\left( \{ x \in \mathbb{R} : |x| > X_{\delta} \} \right).
\end{equation}
For $v \in C_{\alpha, \delta}$, we define
\begin{equation}
    \| v \|_{\alpha, \delta} \coloneq \sup_{|x| > X_{\delta}} (1 + |x|^{\alpha}) |v(x)|,
\end{equation}
so that $( C_{\alpha, \delta}, \| \cdot \|_{\alpha, \delta} )$ is a Banach space.
Moreover, we easily see that $C_{1 + \sigma, \delta} \hookrightarrow C_{0, \delta}$.

Here we set $g(\phi) \coloneq a_{1} \phi^{p-1} + a_{2} \phi^{q-1}$ and put
\begin{align}
    A_{\delta}(x) &\coloneq \int_{|y| \leq X_{\delta}} G_{\sigma}(x-y) g(\phi(y))\phi(y) \, dy, \\
    T_{\delta}[v](x) &\coloneq \int_{|y| \geq X_{\delta}} G_{\sigma}(x-y) g(\phi(y)) v(y) \, dy
\end{align}
for $v \in C_{\alpha, \delta}$ and $x \in \mathbb{R}$.
Moreover, for $v \in C_{\alpha, \delta}$, we define
\begin{equation}
    \Phi_{\delta}[v](x) \coloneq A_{\delta}(x) + T_{\delta}[v](x).
\end{equation}
Then we can see that $\Phi_{\delta}$ is a contraction map on $C_{\alpha, \delta}$ when $\delta > 0$ is sufficiently small.
Therefore, by the contraction mapping theorem, we can obtain $\Phi_{\delta}[\phi] = \phi$ in $C_{1 + \sigma, \delta}$, which implies that $\phi \in C_{1 + \sigma, \delta}$.
For details, see Amick--Toland~\cite{Amick-Toland_1}.

Next, we observe the decay estimate of derivatives of a nontrivial solution to \eqref{eq:DecayEst_ScaledEquation;gkdv_inst}.
To complete the proof of Theorem \ref{prop:DecayEst;gkdv_inst}, it suffices to prove the following lemma.
\begin{lem} \label{lem:DecayEst_nth_derivative_induction;gkdv_inst}
    Let $\phi \in H^{\sigma/2}(\mathbb{R})$ be a nontrivial solution to \eqref{eq:DecayEst_ScaledEquation;gkdv_inst}.
    Then, the following statement \textup{$\text{(\textasteriskcentered)}_{\kappa}$} holds for all $\kappa \in \mathbb{Z}_{+}$.
    \begin{itemize}
        \item[\textup{$\text{(\textasteriskcentered)}_{\kappa}$}] For all $l \in \mathbb{Z}_{+}$, there exists some $C > 0$ such that
        \begin{equation}
            |\partial_{x}^{l} \phi(x)| \leq C \langle x \rangle^{-(1 + \sigma) - \min\{l, \kappa \}} \label{eq:DecayEst_0061;gkdv_inst}
        \end{equation}
        for all $x \in \mathbb{R}$.
    \end{itemize}
\end{lem}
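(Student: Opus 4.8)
The plan is to prove $(\ast)_{\kappa}$ by induction on $\kappa$, using throughout the differentiated integral identity
\[
    \partial_{x}^{l} \phi = G_{\sigma} \ast \partial_{x}^{l}(a_{1}\phi^{p} + a_{2}\phi^{q}),
\]
which follows by differentiating \eqref{eq:DecayEst_00101;gkdv_inst} under the integral sign. This is legitimate because $\phi \in H^{\infty}(\mathbb{R})$ by Lemma \ref{lem:DecayEst_H_infinity;gkdv_inst}, so every derivative of $\phi$ is bounded and integrable. Since $\min\{l,\kappa+1\} = \min\{l,\kappa\}$ for $l \le \kappa$, in the inductive step from $(\ast)_{\kappa}$ to $(\ast)_{\kappa+1}$ only the indices $l \ge \kappa+1$ require a new estimate, namely $|\partial_{x}^{l}\phi(x)| \le C\langle x\rangle^{-(1+\sigma)-(\kappa+1)}$; for $l \le \kappa$ the bound is already supplied by $(\ast)_{\kappa}$.

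First I would record the decay of the source $h_{l} \coloneq \partial_{x}^{l}(a_{1}\phi^{p}+a_{2}\phi^{q})$. By the Leibniz rule, $h_{l}$ is a finite sum of products $\partial_{x}^{i_{1}}\phi \cdots \partial_{x}^{i_{m}}\phi$ with $m \in \{p,q\} \ge 2$ and $i_{1}+\cdots+i_{m}=l$. For the base case $\kappa=0$, the estimate $(\ast)_{0}$ for $l=0$ is exactly Proposition \ref{prop:DecayEst_0th_derivative;gkdv_inst}; for $l \ge 1$ I bound each factor either by $|\phi| \lesssim \langle x\rangle^{-(1+\sigma)}$ (Proposition \ref{prop:DecayEst_0th_derivative;gkdv_inst}) or, when differentiated, by the uniform bound coming from $\phi \in H^{\infty}(\mathbb{R})$. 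Since at least two factors occur, $h_{l}$ decays at least like $\langle x\rangle^{-(m-1)(1+\sigma)} \lesssim \langle x\rangle^{-(1+\sigma)}$ and lies in $L^{1}(\mathbb{R})$, so $G_{\sigma}\ast h_{l}$ inherits the decay $\langle x\rangle^{-(1+\sigma)}$ by properties (i) and (iii) of Lemma \ref{lem:DecayEst_Properties_of_G;gkdv_inst}. In the inductive step I instead apply $(\ast)_{\kappa}$ to each factor together with the elementary inequality $\sum_{j}\min\{i_{j},\kappa\} \ge \min\{\sum_{j}i_{j},\kappa\} = \min\{l,\kappa\} = \kappa$ (valid for $l \ge \kappa+1$); this identifies $\phi^{m-1}\partial_{x}^{l}\phi$ as the slowest-decaying product and yields $|h_{l}(x)| \lesssim \langle x\rangle^{-2(1+\sigma)-\kappa}$.

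Next I would split $\partial_{x}^{l}\phi(x)=\int_{|y|\le|x|/2}+\int_{|y|>|x|/2}$. On the far part $|y|>|x|/2$ the source is small, and $\int_{|y|>|x|/2}|G_{\sigma}(x-y)||h_{l}(y)|\,dy \le \|G_{\sigma}\|_{L^{1}}\sup_{|y|>|x|/2}|h_{l}(y)| \lesssim \langle x\rangle^{-2(1+\sigma)-\kappa}$, which already beats $\langle x\rangle^{-(1+\sigma)-(\kappa+1)}$ since $1+\sigma \ge 1$. The decisive term is the near part. There $|x-y| \ge |x|/2$, so the kernel stays away from its singularity at the origin and $y \mapsto G_{\sigma}(x-y)$ is smooth on the domain of integration; moreover $h_{l}$ has the vanishing moments $\int_{\mathbb{R}}y^{j}h_{l}(y)\,dy=0$ for $0 \le j \le l-1$, obtained by integrating by parts $l$ times and using that $\partial_{x}^{l-1}(a_{1}\phi^{p}+a_{2}\phi^{q}),\dots$ decay. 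I would integrate by parts $\kappa+1$ times, transferring $\kappa+1$ derivatives from $h_{l}$ onto the kernel, producing a main term $\int_{|y|\le|x|/2}(\partial_{x}^{\kappa+1}G_{\sigma})(x-y)\,\partial_{x}^{l-\kappa-1}(a_{1}\phi^{p}+a_{2}\phi^{q})(y)\,dy$ together with boundary contributions at $|y|=|x|/2$. Using the decay $|\partial_{x}^{\kappa+1}G_{\sigma}(\zeta)| \lesssim \langle\zeta\rangle^{-(1+\sigma)-(\kappa+1)}$ for $|\zeta| \ge |x|/2$ and the $L^{1}$ bound on $\partial_{x}^{l-\kappa-1}(a_{1}\phi^{p}+a_{2}\phi^{q})$, the main term is $\lesssim \langle x\rangle^{-(1+\sigma)-(\kappa+1)}$; the boundary terms are products of $\partial_{x}^{j}G_{\sigma}(x/2)$ with a fast-decaying derivative of $a_{1}\phi^{p}+a_{2}\phi^{q}$ at $|y|=|x|/2$, hence decay faster still. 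Combining the two parts gives $(\ast)_{\kappa+1}$.

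The main obstacle is the pointwise decay of the derivatives of $G_{\sigma}$, namely $|\partial_{x}^{j}G_{\sigma}(x)| \lesssim \langle x\rangle^{-(1+\sigma)-j}$ for $|x| \ge 1$, which is needed in the near part but is not contained in Lemma \ref{lem:DecayEst_Properties_of_G;gkdv_inst} (that lemma records only the zeroth-order asymptotic). I expect this to be the crux: convolution against $G_{\sigma}$ alone can \emph{never} produce decay faster than $\langle x\rangle^{-(1+\sigma)}$, so the extra powers carried by the derivatives must come entirely from transferring derivatives onto the kernel and exploiting the vanishing moments of the source. I would establish the required kernel estimate either by differentiating the oscillatory-integral representation of $G_{\sigma}$ as in Ria\~{n}o--Roudenko~\cite{Riano-Roudenko_2022} (the same analysis that yields property (iii)), or by a scaled interior-regularity estimate deducing $\sup_{[x/2,3x/2]}|\partial_{x}^{j}G_{\sigma}| \lesssim \langle x\rangle^{-j}\sup_{[x/4,7x/4]}|G_{\sigma}|$ from the zeroth-order decay. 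The remaining bookkeeping---justifying differentiation under the integral, the Leibniz count, and the integrability of all remainder and boundary terms---is routine once the kernel estimate is in hand, precisely because the near/far split confines the kernel's singularity to the far region, where the source is already negligible.
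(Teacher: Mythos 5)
Your overall architecture is genuinely different from the paper's, and most of it checks out. Where the paper multiplies by the weight $x^{\kappa+1}$ and commutes it past $\partial_x^{\kappa+1}/(1+D_x^{\sigma})$ via Lemma \ref{lem:DecayEst_Commutator;gkdv_inst}, so that every kernel that ever appears ($G_{\sigma}$ and the $K_{k,m}$ of Lemma \ref{lem:DecayEst_K_in_Commutator;gkdv_inst}) has a \emph{bounded}, decaying symbol and can be fed into the convolution lemma (Lemma \ref{lem:DecayEst_FLS_ConvolutionLimit;gkdv_inst}), you instead split the convolution into near and far regions and transfer $\kappa+1$ derivatives onto the kernel in the near region, where $|x-y|\ge |x|/2$ keeps you away from its singularity. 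Your bookkeeping is correct: the inequality $\sum_{j}\min\{i_{j},\kappa\}\ge\min\{l,\kappa\}$, the far-region bound $\langle x\rangle^{-2(1+\sigma)-\kappa}$, and the boundary terms at $|y|=|x|/2$ all work, and your remark that convolution against $G_{\sigma}$ alone can never beat $\langle x\rangle^{-(1+\sigma)}$ correctly identifies why extra structure is unavoidable.

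However, there is a genuine gap exactly at what you call the crux: the bound $|\partial_x^{j}G_{\sigma}(x)|\lesssim \langle x\rangle^{-(1+\sigma)-j}$ for $|x|\ge 1$ (together with smoothness of $G_{\sigma}$ away from the origin) is never proved, and neither of your two suggested routes works as stated. Differentiating the integral representation of $G_{\sigma}$ under the integral sign is not legitimate: for $j\ge 1$ and $\sigma<2$ the symbol $(i\xi)^{j}/(1+|\xi|^{\sigma})$ is unbounded, the integral diverges, and $\partial_x^{j}G_{\sigma}$ is a priori only a distribution with a nonintegrable singularity at the origin (already $\partial_x G_{1}\sim 1/|x|$ near $x=0$, by the paper's Step 2 for $g_{1}$). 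Making the tail estimate rigorous requires a low/high frequency splitting and oscillatory-integral regularization, i.e.\ a lemma of at least the weight of Lemma \ref{lem:DecayEst_K_in_Commutator;gkdv_inst} --- and avoiding unbounded symbols is precisely what the paper's commutator identity buys, since every $K_{k,m}$ has symbol $|\xi|^{\sigma m}/(1+|\xi|^{\sigma})^{1+k}$ with $m\le k$, hence bounded and decaying. Your fallback, the scaled interior estimate $\sup_{[x/2,3x/2]}|\partial_x^{j}G_{\sigma}|\lesssim \langle x\rangle^{-j}\sup_{[x/4,7x/4]}|G_{\sigma}|$, cannot be deduced from the zeroth-order decay alone: $D_x^{\sigma}$ is nonlocal, the equation $D_x^{\sigma}G_{\sigma}=-G_{\sigma}$ holds only away from the origin, and any interior regularity estimate for $D_x^{\sigma}$ carries a global tail term that sees the singularity of $G_{\sigma}$ at $0$; without controlling that tail the claimed local bound is unjustified. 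The kernel estimate you need is in fact true, so your proof can be completed along these lines, but as written the decisive step is missing and both proposed ways of filling it fail in their stated form.
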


We will prove Lemma \ref{lem:DecayEst_nth_derivative_induction;gkdv_inst} with an induction argument with respect to $\kappa \in \mathbb{Z}_{+}$.

The following lemma plays an important role for the proof of Lemma \ref{lem:DecayEst_nth_derivative_induction;gkdv_inst}.
\begin{lem} \label{lem:DecayEst_FLS_ConvolutionLimit;gkdv_inst}
    Let $K, f \in L^{1}(\mathbb{R})$ satisfy
    \begin{equation}
        |K(x)| \leq C |x|^{-\beta}, \quad |f(x)| \leq C \langle x \rangle^{-\beta}
    \end{equation}
    with some $\beta > 1$.
    Assume that
    \begin{equation}
        \lim_{|x| \rightarrow \infty} |x|^{\beta} K(x) = K_{0}, \quad \lim_{|x| \rightarrow \infty} |x|^{\beta}f(x) = 0
    \end{equation}
    with some $K_{0} \in \mathbb{R}$.
    Then it holds that
    \begin{equation}
        \lim_{|x| \rightarrow \infty} |x|^{\beta} (K \ast f)(x) = K_{0} \int_{\mathbb{R}} f(y) \, dy.
    \end{equation}
\end{lem}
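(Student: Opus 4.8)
The plan is to prove the convergence by writing $(K \ast f)(x) = \int_{\mathbb{R}} K(x-y) f(y)\, dy$ and splitting the domain according to whether $y$ lies near or far from the origin (where the bulk of the mass of $f$ sits). Concretely, for $|x|$ large I would write
\[
|x|^{\beta}(K \ast f)(x) = \int_{|y| \le |x|/2} |x|^{\beta} K(x-y) f(y)\, dy + \int_{|y| > |x|/2} |x|^{\beta} K(x-y) f(y)\, dy,
\]
and denote the two integrals by $I_1(x)$ and $I_2(x)$, respectively.

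First I would dispose of the far region $I_2$. Fix $\varepsilon > 0$. Since $\lim_{|y| \to \infty} |y|^{\beta} f(y) = 0$, there is $R > 0$ with $|y|^{\beta}|f(y)| < \varepsilon$ for $|y| > R$; then for $|x| > 2R$ and $|y| > |x|/2$ one has $|f(y)| < \varepsilon |y|^{-\beta} \le \varepsilon\, 2^{\beta}|x|^{-\beta}$, whence $|x|^{\beta}|f(y)| < \varepsilon\, 2^{\beta}$. Combining this with $K \in L^{1}(\mathbb{R})$ gives
\[
|I_2(x)| \le \varepsilon\, 2^{\beta} \int_{|y|>|x|/2} |K(x-y)|\, dy \le \varepsilon\, 2^{\beta} \|K\|_{L^{1}},
\]
so $I_2(x) \to 0$ as $|x| \to \infty$, since $\varepsilon$ is arbitrary.

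For the near region $I_1$ I would invoke dominated convergence. On $\{|y| \le |x|/2\}$ one has $|x-y| \ge |x|/2$, so $|x-y| \to \infty$ uniformly there; hence $|x-y|^{\beta} K(x-y) \to K_0$ by hypothesis, while $\bigl| |x| - |x-y| \bigr| \le |y|$ forces $|x|^{\beta}/|x-y|^{\beta} \to 1$, and the two combine to give $|x|^{\beta} K(x-y) \to K_0$ for each fixed $y$; meanwhile the indicator $\mathbf{1}_{\{|y| \le |x|/2\}} \to 1$. The decay bound supplies an $x$-independent integrable majorant: on the same set $|x|^{\beta}|K(x-y)| \le C |x|^{\beta}|x-y|^{-\beta} \le C\, 2^{\beta}$, so the integrand is dominated by $C\, 2^{\beta}|f(y)| \in L^{1}(\mathbb{R})$. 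Applying the dominated convergence theorem along an arbitrary sequence $|x_n| \to \infty$ yields $I_1(x) \to K_0 \int_{\mathbb{R}} f(y)\, dy$, and together with $I_2(x) \to 0$ this gives the claim.

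The main obstacle I anticipate is making the dominated convergence step fully rigorous despite the domain of integration $\{|y| \le |x|/2\}$ depending on $x$; this is handled cleanly by absorbing the moving cutoff into the indicator function and checking that the majorant $C\, 2^{\beta}|f(y)|$ is independent of $x$. I would also emphasize that the far-region estimate is precisely where the strengthened hypothesis $|y|^{\beta} f(y) \to 0$ (rather than mere $O(|y|^{-\beta})$ decay) is indispensable, since it is exactly what forces $I_2$ to vanish rather than merely remain bounded.
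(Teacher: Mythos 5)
Your proof is correct. Note, however, that the paper does not prove this lemma at all: its entire ``proof'' is a citation to Frank--Lenzmann--Silvestre \cite[Lemma C.3]{Frank-Lenzmann-Silvestre}, so there is no internal argument to compare against. Your near/far splitting at $|y| \le |x|/2$ is the standard way such convolution-asymptotics lemmas are established, and all the steps check out: the far-region estimate correctly exploits $|y|^{\beta}f(y) \to 0$ together with $K \in L^{1}$ (a limsup-in-$\varepsilon$ argument), and the near-region step is a legitimate application of dominated convergence, since on $\{|y| \le |x|/2\}$ one has $|x-y| \ge |x|/2$, giving both the pointwise limit $|x|^{\beta}K(x-y) \to K_{0}$ (via $|x|/|x-y| \to 1$) and the $x$-independent majorant $C\,2^{\beta}|f(y)|$ once the moving domain is absorbed into an indicator. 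Two minor points worth making explicit if this were to be inserted into the paper: first, the hypotheses imply $f$ is bounded, so $(K \ast f)(x)$ is defined for \emph{every} $x$ (not just a.e.), which is needed for the pointwise limit statement to make sense; second, your remark about where the strengthened hypothesis on $f$ enters is apt---without it the far region only stays bounded, and the conclusion would fail. What your version buys over the paper's is self-containedness and transparency about which hypothesis does which job; what the citation buys is brevity, and access to the slightly more general statement proved in the reference.
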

\begin{proof}
    See \cite[Lemma C.3]{Frank-Lenzmann-Silvestre}.
\end{proof}

Now we proceed the proof of Lemma \ref{lem:DecayEst_nth_derivative_induction;gkdv_inst}.

First, we show that $\text{(\textasteriskcentered)}_{\kappa}$ holds for $\kappa = 0$.
Namely, we shall prove that
\begin{equation}
    |\partial_{x}^{l} \phi(x)| \leq C \langle x \rangle^{-(1 + \sigma)} \label{eq:DecayEst_0140;gkdv_inst}
\end{equation}
holds for all $l \in \mathbb{Z}_{+}$.

When $l = 0$, inequality \eqref{eq:DecayEst_0140;gkdv_inst} follows from Proposition \ref{prop:DecayEst_0th_derivative;gkdv_inst}.

Next, we consider the case that $l \geq 1$.
By \eqref{eq:DecayEst_00101;gkdv_inst}, we obtain
\begin{equation}
    \partial_{x}^{l} \phi = a_{1} \frac{\partial_{x}^{l}}{1 + D_{x}^{\sigma}} \phi^{p} + a_{2} \frac{\partial_{x}^{l}}{1 + D_{x}^{\sigma}} \phi^{q}. \label{eq:DecayEst_0150;gkdv_inst}
\end{equation}
Therefore, it suffices to consider decay estimate of terms
\begin{equation}
    \frac{\partial_{x}^{l}}{1 + D_{x}^{\sigma}} \phi^{r} = G_{\sigma} \ast \partial_{x}^{l} \{ \phi^{r} \}
\end{equation}
for $r \in \{ p, q \}$.

When $l = 1$, since $\phi \in H^{\infty}(\mathbb{R})$ and $r \geq 2$, we have
\begin{equation}
    | \partial_{x} \{ \phi^{r}(x) \} | = |r \phi^{r-1}(x) \partial_{x}\phi(x)| \leq C \| \partial_{x} \phi \|_{L^{\infty}} \langle x \rangle^{-(r - 1)(1 + \sigma)} \leq C \langle x \rangle^{-(1 + \sigma)}.
\end{equation}
Then, by Lemma \ref{lem:DecayEst_FLS_ConvolutionLimit;gkdv_inst}, we obtain
\begin{equation}
    |x|^{1 + \sigma} \left( G_{\sigma} \ast \phi^{r-1} \partial_{x}\phi \right)(x) \rightarrow C
\end{equation}
as $|x| \rightarrow +\infty$ with some constant $C \in \mathbb{R}$.
This implies that
\begin{equation}
    \left| \frac{\partial_{x}}{1 + D_{x}^{\sigma}} \phi^{r} (x) \right| \leq \frac{C}{|x|^{1 + \sigma}} \label{eq:DecayEst_0160;gkdv_inst}
\end{equation}
for $|x| \geq 1$.
Therefore, by \eqref{eq:DecayEst_0150;gkdv_inst}, \eqref{eq:DecayEst_0160;gkdv_inst}, and the continuity of $\partial_{x} \phi$, we conclude that
\begin{equation}
    |\partial_{x} \phi(x)| \leq C \langle x \rangle^{-(1 + \sigma)}.
\end{equation}

When $l \geq 2$, we can prove \eqref{eq:DecayEst_0140;gkdv_inst} by induction.
Assume that
\begin{equation}
    |\partial_{x}^{k} \phi(x)| \leq C \langle x \rangle^{-(1 + \sigma)}
\end{equation}
holds for all $k \in \mathbb{Z}$ with $0 \leq k \leq l$.
By the Leibniz rule, we obtain
\begin{equation}
    \partial_{x}^{l+1} \{ \phi^{r}(x) \} = \sum_{k=0}^{l} \partial_{x}^{k} \{ \phi^{r-1}(x) \} \partial_{x}^{l+1-k} \phi(x) + \phi^{r-1}(x) \partial_{x}^{l+1} \phi(x)
\end{equation}
By the inductive assumption, we can see that
\begin{equation}
    |\partial_{x}^{k} \{ \phi^{r-1}(x) \} \partial_{x}^{l+1-k} \phi(x)| \leq C \langle x \rangle^{-r(1 + \sigma)} \leq C \langle x \rangle^{-(1 + \sigma)}
\end{equation}
for each $0 \leq k \leq l$.
Moreover, since $\phi \in H^{\infty}(\mathbb{R})$, we have
\begin{equation}
    |\phi^{r-1}(x) \partial_{x}^{l+1}\phi(x)| \leq C \| \partial_{x}^{l+1} \phi \|_{L^{\infty}} \langle x \rangle^{-(r-1)(1+\sigma)} \leq C \langle x \rangle^{-(1 + \sigma)}.
\end{equation}
Therefore, we obtain
\begin{equation}
    |\partial_{x}^{l+1} \{\phi^{r}(x)\}| \leq C \langle x \rangle^{-(1 + \sigma)}.
\end{equation}
By Lemma \ref{lem:DecayEst_FLS_ConvolutionLimit;gkdv_inst} again, we can see that
\begin{equation}
    \left| \frac{\partial_{x}^{l + 1}}{1 + D_{x}^{\sigma}} \phi^{r}(x) \right| = | G_{\sigma} \ast \partial_{x}^{l + 1}\{\phi^{r}\}(x)| \leq \frac{C}{|x|^{1 + \sigma}}
\end{equation}
holds for $|x| \geq 1$.

Hence, statement $\text{(\textasteriskcentered)}_{\kappa}$ for $\kappa = 0$ is proved.

Next, we consider the proof that statement $\text{(\textasteriskcentered)}_{\kappa}$ holds for $\kappa \in \mathbb{Z}_{+}$.
To prove $\text{(\textasteriskcentered)}_{\kappa}$ by induction, we need some lemmas.

\begin{lem}\label{lem:DecayEst_boundedness_dxdf;gkdv_inst}
    Let $\kappa \in \mathbb{Z}_{+}$ and assume that statement \textup{$\text{(\textasteriskcentered)}_{\kappa}$} holds for $\kappa \in \mathbb{Z}_{+}$.
    Let $\gamma, \beta, \eta \in \mathbb{Z}_{+}$ satisfy that $\beta \leq \kappa + 1$ and
    \begin{equation}
        \beta - \gamma - \eta - (r-1)(1+\sigma) < 0.
    \end{equation}
    Then, there exists $H \in C(\mathbb{R})$ such that
    $H$ is bounded in $\mathbb{R}$, $|H(x)| \rightarrow 0$ as $|x| \rightarrow \infty$, and that
    \begin{equation}
        \left| \partial_{x}^{\gamma} \{ x^{\beta} \partial_{x}^{\eta} \phi^{r}(x) \} \right| \leq H(x) \langle x \rangle^{-(1 + \sigma)}
    \end{equation}
    for all $x \in \mathbb{R}$.
\end{lem}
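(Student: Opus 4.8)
The plan is to reduce the estimate to two applications of the Leibniz rule, combined with the inductive decay hypothesis $\text{(\textasteriskcentered)}_{\kappa}$, the smoothness $\phi \in H^{\infty}(\mathbb{R})$ from Lemma \ref{lem:DecayEst_H_infinity;gkdv_inst}, and the structural constraints $\beta \leq \kappa+1$, $r \geq 2$, $\sigma \geq 1$. First I would expand the outer derivative: since $\partial_{x}^{i} x^{\beta} = \frac{\beta!}{(\beta-i)!} x^{\beta-i}$ vanishes for $i > \beta$,
\[
    \partial_{x}^{\gamma} \{ x^{\beta} \partial_{x}^{\eta} \phi^{r} \} = \sum_{i=0}^{\min\{\gamma, \beta\}} \binom{\gamma}{i} \frac{\beta!}{(\beta - i)!} x^{\beta - i} \, \partial_{x}^{\gamma + \eta - i} \{ \phi^{r} \},
\]
where $\beta - i \geq 0$ and $\gamma + \eta - i \geq 0$ on the admissible range. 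It then suffices to control each summand $x^{\beta - i} \partial_{x}^{\gamma + \eta - i} \{ \phi^{r} \}$ separately.

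The key intermediate estimate I would establish is that, for every $m \in \mathbb{Z}_{+}$,
\[
    |\partial_{x}^{m} \{ \phi^{r} \}(x)| \leq C \langle x \rangle^{-r(1 + \sigma) - \min\{m, \kappa\}}.
\]
To prove it I would apply the multinomial Leibniz rule to $\phi^{r}$ and bound each factor by $\text{(\textasteriskcentered)}_{\kappa}$: each summand $\prod_{i=1}^{r} \partial_{x}^{m_{i}} \phi$ with $\sum_{i} m_{i} = m$ is then dominated by $\langle x \rangle^{-r(1+\sigma) - \sum_{i} \min\{m_{i}, \kappa\}}$, and the slowest-decaying summand minimizes $\sum_{i} \min\{m_{i}, \kappa\}$. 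A short optimization shows this minimum over all partitions of $m$ is exactly $\min\{m, \kappa\}$, attained by concentrating all derivatives on a single factor; summing the finitely many terms gives the claim. Smoothness of $\phi$ guarantees all these expressions are continuous.

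Inserting this into the Leibniz expansion, the $i$-th summand is bounded by $C \langle x \rangle^{\beta - i - r(1+\sigma) - \min\{\gamma + \eta - i, \kappa\}}$, so the whole task reduces to verifying that the exponent is strictly below $-(1+\sigma)$ for every admissible $i$, i.e.\ that
\[
    (r - 1)(1 + \sigma) + \min\{\gamma + \eta - i, \kappa\} - \beta + i > 0.
\]
This is the main obstacle, and the only place the two hypotheses genuinely enter, so I would split into two cases. When $\gamma + \eta - i \leq \kappa$ the minimum equals $\gamma + \eta - i$ and the inequality collapses to $(r-1)(1+\sigma) + \gamma + \eta - \beta > 0$, which is precisely the standing hypothesis $\beta - \gamma - \eta - (r-1)(1+\sigma) < 0$. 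When $\gamma + \eta - i > \kappa$ the inductive decay saturates at order $\kappa$ and yields no further gain from differentiation; here I would instead use $\beta \leq \kappa + 1$, so $\kappa - \beta \geq -1$, together with the surplus $(r-1)(1+\sigma) \geq 2$ (valid since $r \in \{p, q\}$ gives $r \geq 2$ and $\sigma \geq 1$), obtaining $(r-1)(1+\sigma) + \kappa - \beta + i \geq 1 > 0$. The delicate point is exactly this interplay: once more than $\kappa$ derivatives fall on $\phi^{r}$ one cannot extract additional decay, so the polynomial weight $|x|^{\beta - i}$ must be absorbed entirely by the extra decay $(r-1)(1+\sigma)$ coming from having at least two factors.

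Having shown that $\partial_{x}^{\gamma} \{ x^{\beta} \partial_{x}^{\eta} \phi^{r} \}$ decays strictly faster than $\langle x \rangle^{-(1+\sigma)}$, I would finally set $H(x) \coloneq \langle x \rangle^{1+\sigma} |\partial_{x}^{\gamma} \{ x^{\beta} \partial_{x}^{\eta} \phi^{r} \}(x)|$. The required estimate then holds with equality by construction; $H$ is continuous because the bracketed expression is a smooth function; the exponent computation above shows $H(x) \to 0$ as $|x| \to \infty$; and continuity together with vanishing at infinity yields global boundedness, completing the proof.
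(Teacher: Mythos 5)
Your proof is correct. Note, however, that the paper itself does not prove this lemma at all --- its ``proof'' is a citation to Ria\~{n}o--Roudenko \cite[Lemma 2.5]{Riano-Roudenko_2022} and its appendix, where the analogous statement is established for the single-power equation --- so what you have written is a genuinely self-contained alternative rather than a reproduction of the paper's argument. Your structure (outer Leibniz rule splitting $x^{\beta}$ against $\partial_{x}^{\gamma}$, then the multinomial Leibniz rule for $\partial_{x}^{m}\{\phi^{r}\}$ combined with the inductive hypothesis $(\ast)_{\kappa}$) is the natural one and is in the spirit of the cited proof, but your write-up makes explicit the two points the citation hides: first, the optimization showing that the worst partition concentrates all derivatives on one factor, so that $|\partial_{x}^{m}\{\phi^{r}\}(x)| \leq C \langle x \rangle^{-r(1+\sigma)-\min\{m,\kappa\}}$; second, exactly where each hypothesis enters --- in the unsaturated case $\gamma+\eta-i \leq \kappa$ the required positivity of the exponent surplus is precisely the assumption $\beta-\gamma-\eta-(r-1)(1+\sigma)<0$, while in the saturated case $\gamma+\eta-i>\kappa$, where differentiation yields no further decay, you correctly fall back on $\beta \leq \kappa+1$ together with $(r-1)(1+\sigma) \geq 2$ (from $r \geq 2$, $\sigma \geq 1$), giving surplus at least $1$. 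The closing step is also sound: defining $H(x) \coloneq \langle x \rangle^{1+\sigma}\left|\partial_{x}^{\gamma}\{x^{\beta}\partial_{x}^{\eta}\phi^{r}\}(x)\right|$ makes the estimate an identity, smoothness of $\phi$ (Lemma \ref{lem:DecayEst_H_infinity;gkdv_inst}) gives continuity, and the strict exponent surplus --- which is genuinely needed, since the lemma asserts $o(\langle x \rangle^{-(1+\sigma)})$ decay, not merely $O(\langle x \rangle^{-(1+\sigma)})$ --- forces $H(x)\rightarrow 0$ at infinity, whence boundedness.
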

\begin{proof}
    See \cite[Lemma 2.5]{Riano-Roudenko_2022} and its appendix.
\end{proof}

\begin{lem} \label{lem:DecayEst_Commutator;gkdv_inst}
    Let $\beta \in \mathbb{N}$ and $f$ be a function satisfying that $\partial_{x}^{\beta'} \{ x^{\beta'} f \} \in L^{2}(\mathbb{R})$ holds for $0 \leq \beta' \leq \beta$.
    Then, it holds in $L^{2}$-sense that
    \begin{align}
        \left[ x^{\beta}, \frac{\partial_{x}^{\beta}}{1 + D_{x}^{\sigma}} \right]f &= \sum_{j=1}^{\beta} \sum_{k=1}^{j} \sum_{m=1}^{k} c_{\beta, j, k, m} \frac{D_{x}^{\sigma m -2k} \partial_{x}^{2k}}{(1 + D_{x}^{\sigma})^{1 + k}} \partial_{x}^{\beta - j}\{ x^{\beta - j} f \} + \sum_{j=0}^{\beta} c_{\beta, j} \frac{1}{1 + D_{x}^{\sigma}} \partial_{x}^{\beta - j}\{ x^{\beta - j} f \},
    \end{align}
    where $[A, B] \coloneq AB - BA$ for operators $A$ and $B$, and $c_{\beta, j, k, m}, c_{\beta, j} \in \mathbb{R}$ are coefficients.
\end{lem}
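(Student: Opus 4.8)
The plan is to pass to the Fourier side and treat the whole identity as a symbol computation. With the paper's convention, $\widehat{\partial_x g}(\xi)=i\xi\,\hat g(\xi)$ and $\widehat{xg}(\xi)=i\partial_\xi\hat g(\xi)$, so $\partial_x^\beta/(1+D_x^\sigma)$ is the Fourier multiplier with symbol $m(\xi)=(i\xi)^\beta(1+|\xi|^\sigma)^{-1}$, and multiplication by $x$ acts as $i\partial_\xi$ on the transform side. The first step is to record these dictionary facts and to translate the hypothesis: $\partial_x^{\beta'}\{x^{\beta'}f\}\in L^2$ for $0\le\beta'\le\beta$ is exactly the frequency-weighted condition that $\xi^{\beta'}\partial_\xi^{\beta'}\hat f\in L^2$ for those $\beta'$. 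This is precisely the regularity needed to differentiate $\hat f$ in $\xi$ up to order $\beta$ and to guarantee that each operator on the right-hand side — whose symbols $|\xi|^{\sigma m}/(1+|\xi|^\sigma)^{1+k}$ and $(1+|\xi|^\sigma)^{-1}$ are bounded — sends the relevant data into $L^2$, so that every manipulation below is legitimate in the $L^2$ sense.

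The second step is the algebraic backbone. For any Fourier multiplier $A$ with symbol $a(\xi)$ one has the finite expansion $[x^\beta,A]=\sum_{k=1}^\beta\binom{\beta}{k}\,i^k A^{(k)}\,x^{\beta-k}$, where $A^{(k)}$ is the multiplier with symbol $\partial_\xi^k a(\xi)$; this follows by iterating the first-order relation $[x,A]=iA'$, or equivalently by induction on $\beta$ using $[x^{\beta+1},A]=x[x^\beta,A]+[x,A]x^\beta$. Applying it with $a(\xi)=(i\xi)^\beta(1+|\xi|^\sigma)^{-1}$ and expanding $\partial_\xi^k a$ by the Leibniz rule splits each contribution into a factor from $\partial_\xi^{k-l}[(i\xi)^\beta]$, which is a monomial $c\,\xi^{\beta-k+l}$ representing a power of $\partial_x$, and a factor $\partial_\xi^l[(1+|\xi|^\sigma)^{-1}]$.

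The third step computes this resolvent derivative explicitly. By induction on $l$, using $\partial_\xi|\xi|^\sigma=\sigma|\xi|^{\sigma-2}\xi$, one shows that $\partial_\xi^l[(1+|\xi|^\sigma)^{-1}]$ is a finite linear combination of terms with denominator $(1+|\xi|^\sigma)^{1+k}$ and a numerator built from $m$ surviving $|\xi|^\sigma$-type factors, with $1\le k\le l$ and $1\le m\le k$; here $k$ counts how many times the denominator was differentiated and $m$ the number of surviving $|\xi|^\sigma$ factors. The key observation is that the odd powers of $\xi$ produced by differentiating $|\xi|^\sigma$ always recombine with the $\xi$-monomials coming from the $(i\xi)^\beta$ factor and from the leftover derivatives into an \emph{even} total power; grouping each such even power as $\partial_x^{2k}$ converts the term into the symbol of $\dfrac{D_x^{\sigma m - 2k}\partial_x^{2k}}{(1+D_x^\sigma)^{1+k}}$, which is exactly the operator displayed in the statement, with the degenerate case $k=0$ producing the separate sum carrying $1/(1+D_x^\sigma)$.

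Finally, I would reassemble. The raw expansion from Steps 2--3 yields terms of the shape (dressing multiplier)$\,\cdot\,\partial_x^{a}\{x^{b}f\}$ in which the surviving derivative count $a$ and surviving multiplication count $b$ need not yet coincide. Repeated use of the Leibniz rule to trade a mismatched derivative against the dressing multiplier — absorbing each extra factor $i\xi$ into the $\partial_x^{2k}$ block — collapses every term to the balanced form $\partial_x^{\beta-j}\{x^{\beta-j}f\}$ indexed by a single integer $j$, with the dressing operator carrying indices subject to $1\le m\le k\le j\le\beta$; collecting the numerical factors defines the constants $c_{\beta,j,k,m}$ and $c_{\beta,j}$. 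I expect this last reorganization to be the main obstacle: keeping the index ranges exactly as stated and verifying that the operator class is closed under the bookkeeping, so that the induction on $\beta$ genuinely closes, is where the real work lies. The analytic content is comparatively easy, since everything stays in $L^2$ by the hypothesis recorded in Step 1.
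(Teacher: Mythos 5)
The paper itself contains no proof of this lemma: its ``proof'' is the citation to \cite[Lemma 2.6]{Riano-Roudenko_2022} and the appendix there, and the statement is taken essentially verbatim from that source. Your sketch reconstructs exactly the argument that the paper outsources: the Fourier dictionary $\widehat{xg}=i\partial_{\xi}\hat{g}$, the expansion $[x^{\beta},A]=\sum_{k=1}^{\beta}\binom{\beta}{k}i^{k}A^{(k)}x^{\beta-k}$ (with $A^{(k)}$ the multiplier with symbol $\partial_{\xi}^{k}a$), the inductive computation of $\partial_{\xi}^{l}\bigl[(1+|\xi|^{\sigma})^{-1}\bigr]$, and the parity/homogeneity regrouping that turns each term into $\pm|\xi|^{\sigma m}(1+|\xi|^{\sigma})^{-(1+k)}$ acting on $\partial_{x}^{\beta-j}\{x^{\beta-j}f\}$. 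The index bookkeeping you describe does close: the ad-expansion index $k$ becomes $j$, the number of resolvent derivatives bounds the denominator exponent, and the parity argument (the $l$-th derivative of an even symbol has parity $(-1)^{l}$, so multiplying back by $\xi^{l}$ always yields an even power) is precisely why every numerator collapses to a single power $|\xi|^{\sigma m}$.

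One point, however, needs more care than your Step 1 claims, and as written it is the step that would fail. It is not true that ``every manipulation below is legitimate in the $L^{2}$ sense'': the intermediate multipliers $A^{(k)}$ have symbols $\partial_{\xi}^{k}\bigl[(i\xi)^{\beta}(1+|\xi|^{\sigma})^{-1}\bigr]$ which grow like $|\xi|^{\beta-\sigma-k}$ at infinity (unbounded when $k<\beta-\sigma$) and are singular at the origin: for $1<\sigma<2$ the resolvent derivatives contain $|\xi|^{\sigma-l}$, non--locally-integrable once $l\geq\sigma+1$, and for $\sigma=1$ differentiating $\mathrm{sgn}(\xi)$ even produces Dirac masses. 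So the raw term $A^{(k)}x^{\beta-k}f$ is not a priori defined on your hypothesis class, and the commutator expansion cannot be applied to $f$ term by term. The repair is to do your Step 4 \emph{before} applying anything to $f$: prove the distributional symbol identity
\begin{equation}
    \partial_{\xi}^{k}\left[ \frac{(i\xi)^{\beta}}{1+|\xi|^{\sigma}} \right] = \sum \bigl(\text{bounded symbols of the stated form}\bigr)\cdot(i\xi)^{\beta-k},
\end{equation}
where the polynomial factors $\xi^{l}$ annihilate the Dirac terms and tame the $|\xi|^{\sigma-l}$ singularities (exactly your parity observation), then apply the resulting \emph{bounded} multipliers to $\partial_{x}^{\beta-k}\{x^{\beta-k}f\}\in L^{2}$, verifying the identity first for Schwartz $f$ and extending by density using the hypothesis norms. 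With the steps reordered this way your argument is complete and coincides with the proof in the cited appendix.
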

\begin{proof}
    See \cite[Lemma 2.6]{Riano-Roudenko_2022} and its appendix.
\end{proof}

For $k, m \in \mathbb{N}$ with $1 \leq m \leq k$ and a function $f$, we put
\begin{equation}
    \frac{D_{x}^{\sigma m -2k} \partial_{x}^{2k}}{(1 + D_{x}^{\sigma})^{1 + k}} f \coloneq K_{k, m} \ast f
\end{equation}
with
\begin{equation}
    K_{k, m}(x) \coloneq \mathscr{F}^{-1} \left[ \frac{D_{x}^{\sigma m -2k} \partial_{x}^{2k}}{(1 + D_{x}^{\sigma})^{1 + k}} \right] = \frac{1}{\sqrt{2 \pi}} \int_{\mathbb{R}} \frac{|\xi|^{\sigma m - 2k} (i\xi)^{2k}}{(1 + |\xi|^{\sigma})^{1 + k}} e^{i \xi x} \, d\xi.
\end{equation}

\begin{lem} \label{lem:DecayEst_K_in_Commutator;gkdv_inst}
    For any $k, m \in \mathbb{N}$ satisfying $1 \leq m \leq k$, it holds that $K_{k, m} \in L^{1}(\mathbb{R})$ and
    \begin{equation}
        |K_{k, m}(x)| \leq C|x|^{-(1 + \sigma)}
    \end{equation}
    for all $|x| \geq 1$ with some constant $C > 0$.
\end{lem}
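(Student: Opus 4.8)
The plan is to reduce the oscillatory integral defining $K_{k,m}$ to a finite linear combination of convolution powers of $G_{\sigma}$, for which both the integrability and the decay follow from the properties of $G_{\sigma}$ already recorded in Lemma \ref{lem:DecayEst_Properties_of_G;gkdv_inst}. First I would simplify the symbol: since $(i\xi)^{2k} = (-1)^{k}\xi^{2k} = (-1)^{k}|\xi|^{2k}$, the factor $|\xi|^{\sigma m - 2k}(i\xi)^{2k}$ collapses to $(-1)^{k}|\xi|^{\sigma m}$, so that
\[
    K_{k,m}(x) = \frac{(-1)^{k}}{\sqrt{2\pi}} \int_{\mathbb{R}} \frac{|\xi|^{\sigma m}}{(1+|\xi|^{\sigma})^{1+k}} e^{i\xi x}\,d\xi.
\]
Next I would use the elementary identity $|\xi|^{\sigma} = (1+|\xi|^{\sigma}) - 1$ to expand the numerator,
\[
    |\xi|^{\sigma m} = \bigl((1+|\xi|^{\sigma}) - 1\bigr)^{m} = \sum_{i=0}^{m} \binom{m}{i}(-1)^{m-i}(1+|\xi|^{\sigma})^{i},
\]
which gives
\[
    \frac{|\xi|^{\sigma m}}{(1+|\xi|^{\sigma})^{1+k}} = \sum_{i=0}^{m} \binom{m}{i}(-1)^{m-i}\frac{1}{(1+|\xi|^{\sigma})^{1+k-i}}.
\]
The hypothesis $1 \le m \le k$ is exactly what guarantees that every exponent $1+k-i$ (for $0 \le i \le m$) is a positive integer lying between $1+k-m \ge 1$ and $1+k$; this is the sole point at which the assumption relating $m$ and $k$ is used.

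Since $\mathscr{F}[G_{\sigma}]$ is a fixed constant multiple of $(1+|\xi|^{\sigma})^{-1}$, comparing Fourier transforms shows that each symbol $(1+|\xi|^{\sigma})^{-j}$ is a constant multiple of the transform of the $j$-fold convolution $G_{\sigma}^{\ast j} = G_{\sigma} \ast \cdots \ast G_{\sigma}$. Hence $K_{k,m}$ equals a finite linear combination of the functions $G_{\sigma}^{\ast j}$ with $1+k-m \le j \le 1+k$, in particular $j \ge 1$. The $L^{1}$ claim is then immediate from Lemma \ref{lem:DecayEst_Properties_of_G;gkdv_inst}(i): since convolution is contractive on $L^{1}$, one has $\| G_{\sigma}^{\ast j} \|_{L^{1}} \le \| G_{\sigma} \|_{L^{1}}^{j} < \infty$, so $K_{k,m} \in L^{1}(\mathbb{R})$.

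It remains to prove the pointwise bound, which I would obtain by showing, by induction on $j \ge 1$, that $|G_{\sigma}^{\ast j}(x)| \le C_{j}|x|^{-(1+\sigma)}$ for $|x| \ge 1$. The base case $j=1$ is \eqref{eq:DecayEst_0011;gkdv_inst}. For the inductive step I set $h \coloneq G_{\sigma}^{\ast j}$ and split
\[
    (G_{\sigma} \ast h)(x) = \int_{|y| \le |x|/2} G_{\sigma}(x-y)h(y)\,dy + \int_{|y| > |x|/2} G_{\sigma}(x-y)h(y)\,dy.
\]
For $|x| \ge 2$ I would estimate the first integral using $|x-y| \ge |x|/2 \ge 1$ together with \eqref{eq:DecayEst_0011;gkdv_inst} and $\| h \|_{L^{1}} < \infty$, and the second using the inductive decay of $h$ on $|y| > |x|/2 \ge 1$ together with $\| G_{\sigma} \|_{L^{1}} < \infty$; both pieces are bounded by $C|x|^{-(1+\sigma)}$. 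On the remaining compact range $1 \le |x| \le 2$, where $|x|^{-(1+\sigma)}$ is comparable to a constant, it suffices to know that $G_{\sigma}^{\ast j}$ is finite and bounded.

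The main obstacle I anticipate is precisely this local boundedness on $\{1 \le |x| \le 2\}$ in the borderline case $\sigma = 1$, where $G_{1}$ fails to be bounded near the origin (it has a logarithmic singularity, as seen in the proof of Lemma \ref{lem:DecayEst_Properties_of_G;gkdv_inst}(i)). It is nonetheless controlled because that singularity is integrable and, for $x$ in the annulus, the two singular points $y=0$ and $y=x$ of the integrand are separated by distance at least $1$; splitting the convolution integral according to whether $|x-y| \ge 1$ or $|x-y| < 1$ reduces the estimate to the local integrability of $G_{\sigma}$ and the decay of $h$ already available. I note that Lemma \ref{lem:DecayEst_FLS_ConvolutionLimit;gkdv_inst} cannot be invoked directly here, since it requires the slowly varying factor to decay strictly faster than $|x|^{-(1+\sigma)}$, whereas $\lim_{|x|\to\infty}|x|^{1+\sigma}G_{\sigma}(x) = C_{\sigma} \neq 0$ by Lemma \ref{lem:DecayEst_Properties_of_G;gkdv_inst}(iii); this is exactly why the self-contained dyadic splitting is used for the convolution powers.
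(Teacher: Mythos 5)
Your proposal is correct and takes a genuinely different route from the paper. Both arguments begin with the same parity reduction ($(i\xi)^{2k}=(-1)^{k}\xi^{2k}$, so the symbol collapses to $(-1)^{k}|\xi|^{\sigma m}(1+|\xi|^{\sigma})^{-(1+k)}$), but from there the paper stays with the oscillatory integral: it integrates by parts twice in $\xi$, writes $x^{2}K(x)$ as a combination of three integrals $I_{1},I_{2},I_{3}$, and shows each $x^{\sigma-1}I_{j}(x)$ has a finite limit as $x\to+\infty$ (for $I_{3}$ with $m=1$ this needs the rescaling $\eta=x\xi$ and dominated convergence), so that $|x|^{1+\sigma}K_{k,m}(x)$ converges. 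Your binomial expansion $|\xi|^{\sigma m}=((1+|\xi|^{\sigma})-1)^{m}$ instead reduces $K_{k,m}$ to a finite linear combination of convolution powers $G_{\sigma}^{\ast j}$ with $1+k-m\le j\le 1+k$, after which the $L^{1}$ claim is Young's inequality and the decay is an induction with the splitting at $|y|=|x|/2$; the hypothesis $1\le m\le k$ enters completely transparently (all exponents stay $\ge 1$), and you rightly note that Lemma \ref{lem:DecayEst_FLS_ConvolutionLimit;gkdv_inst} cannot be cited for the convolution powers because $\lim_{|x|\to\infty}|x|^{1+\sigma}G_{\sigma}(x)=C_{\sigma}\neq 0$ by Lemma \ref{lem:DecayEst_Properties_of_G;gkdv_inst}. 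The price of your route is exactly the local issue you flagged: you need boundedness of $G_{\sigma}$ away from the origin (for $\sigma=1$ this must be extracted from the paper's estimate $|g_{1}(x)-\log(1/x)|\le C$ on $(0,1)$; for $\sigma>1$ it is trivial) and boundedness of $G_{\sigma}^{\ast j}$, $j\ge 2$, on the annulus $1\le|x|\le 2$ --- obtained most cleanly from $G_{\sigma}\in L^{1}\cap L^{2}$, whence $G_{\sigma}\ast G_{\sigma}\in L^{\infty}$ by Cauchy--Schwarz and then $L^{1}\ast L^{\infty}\subset L^{\infty}$; your separation-of-singularities sketch also works but should be written out. In exchange, the paper's method yields sharp asymptotics (the actual limit of $|x|^{1+\sigma}K_{k,m}(x)$, which vanishes for $m\ge 2$ and is generically nonzero for $m=1$), information your upper-bound induction cannot see; conversely, your decomposition exposes that in the borderline case $\sigma=1$, $m=k$ the kernel contains a genuine $G_{1}$ summand and is unbounded at the origin, so the paper's passing assertion that $K$ is bounded on $[0,\infty)$ needs that caveat there, whereas your argument delivers $K_{k,m}\in L^{1}$ and the decay for $|x|\ge 1$ without ever using boundedness at the origin.
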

\begin{proof}
    Let $k, m \in \mathbb{N}$ satisfy $1 \leq m \leq k$.
    Since the function $K_{k,m}$ is even, we may consider the function
    \begin{equation}
        K(x) \coloneq \int_{0}^{\infty} \frac{\xi^{\sigma m}}{(1 + \xi^{\sigma})^{1 + k}} \cos(x \xi) \, d\xi
    \end{equation}
    for $x \geq 0$ instead of $K_{k, m}(x)$.
    We can see that $K(x)$ is bounded in $[0, \infty)$

    Next, we observe the decay estimate of $K(x)$ at infinity.
    By integrating by parts, we obtain
    \begin{align}
        x K(x) &= \int_{0}^{\infty} \frac{\xi^{\sigma m}}{(1 + \xi^{\sigma})^{1 + k}} \partial_{\xi} \left\{ \sin(x \xi) \right\} \, d\xi \\
        &= \int_{0}^{\infty} \frac{b_{1} \xi^{\sigma (m + 1) - 1} + b_{2} \xi^{\sigma m - 1}}{(1 + \xi^{\sigma})^{2 + k}} \sin(x \xi) \, d\xi, \\
    \end{align}
    and
    \begin{align}
        x^{2} K(x) &= \int_{0}^{\infty} \frac{b_{1} \xi^{\sigma (m + 1) - 1} + b_{2} \xi^{\sigma m - 1}}{(1 + \xi^{\sigma})^{2 + k}} \partial_{\xi} \left\{ - \cos(x \xi) \right\} \, d\xi \\
        &= b_{1} \int_{0}^{\infty} \frac{\xi^{\sigma(m + 2) - 2}}{(1 + \xi^{\sigma})^{3 + k}} \cos(x \xi) \, d\xi + b_{2} \int_{0}^{\infty} \frac{\xi^{\sigma (m+1) - 2}}{(1 + \xi^{\sigma})^{3 + k}} \cos(x \xi) \, d\xi \\
        &\qquad \qquad+ b_{3} \int_{0}^{\infty} \frac{\xi^{\sigma m - 2}}{(1 + \xi^{\sigma})^{3 + k}} \cos(x \xi) \, d\xi \\
        & \eqcolon b_{1} I_{1}(x) + b_{2} I_{2}(x) + b_{3}I_{3}(x),
    \end{align}
    where $b_{j} \in \mathbb{R}$ ($j=1,2,3$) are coefficients which may vary line by line.

    First, we show that $x^{\sigma - 1} I_{1}(x) \rightarrow 0$ as $x \rightarrow +\infty$.
    With similar calculation above, we have
    \begin{align}
        xI_{1}(x) &= \int_{0}^{\infty} \frac{\xi^{\sigma(m+2) - 2}}{(1 + \xi^{\sigma})^{3 + k}} \partial_{\xi} \left\{ \sin(x \xi) \right\} \, d\xi \\
        &= c_{1} \int_{0}^{\infty} \frac{ \xi^{\sigma(m+2) - 3}}{(1 + \xi^{\sigma})^{3 + k}} \sin(x \xi) \, d\xi + c_{2} \int_{0}^{\infty} \frac{\xi^{\sigma(m+3) - 3}}{(1 + \xi^{\sigma})^{4 + k}} \sin(x \xi) \, d\xi
    \end{align}
    with some coefficients $c_{j} \in \mathbb{R}$ ($j=1,2$).
    Noting that $\sigma (m - k - 1) - 3 < 0$, we see that
    \begin{align}
        & \left| c_{1} \int_{0}^{\infty} \frac{ \xi^{\sigma(m+2) - 3}}{(1 + \xi^{\sigma})^{3 + k}} \sin(x \xi) \, d\xi + c_{2} \int_{0}^{\infty} \frac{\xi^{\sigma(m+3) - 3}}{(1 + \xi^{\sigma})^{4 + k}} \sin(x \xi) \, d\xi \right| \\
        &\leq C \left\{ \int_{0}^{\infty} \frac{\xi^{\sigma(m+2) - 3}}{(1 + \xi^{\sigma})^{3 + l}} \, d\xi + \int_{0}^{\infty} \frac{\xi^{\sigma(m+3) - 3}}{(1 + \xi^{\sigma})^{4 + k}} \, d\xi \right\} = C.
    \end{align}
    This implies that $|x I_{1}(x)| \leq C$ for $x \geq 1$, that is, $x^{\sigma - 1} I_{1}(x) \rightarrow 0$ as $x \rightarrow + \infty$.
    We can also see that $x^{\sigma - 1} I_{2}(x) \rightarrow 0$ as $x \rightarrow +\infty$ similarly above.

    Now we consider the asymptotic behaviour of $I_{3}(x)$ at infinity.
    If $m \geq 2$, we can see that $x^{\sigma - 1} I_{3}(x) \rightarrow 0$ as $x \rightarrow +\infty$ with similar way to $I_{1}$.
    If $m = 1$, we can see that
    \begin{align}
        x^{\sigma - 1}I_{3}(x) &= x^{\sigma - 1} \int_{0}^{\infty} \frac{\xi^{\sigma - 2}}{(1 + \xi^{\sigma})^{3 + k}} \cos(x \xi) \, d\xi \\
        &= \int_{0}^{\infty} \frac{\eta^{\sigma - 2}}{(1 + \left( \frac{\eta}{x} \right)^{\sigma})^{3 + k}} \cos \eta \, d\eta \\
        &= \int_{0}^{\pi} \frac{\eta^{\sigma - 2}}{(1 + \left( \frac{\eta}{x} \right)^{\sigma})^{3 + k}} \cos \eta \, d\eta + \int_{\pi}^{\infty} \frac{\eta^{\sigma - 2}}{(1 + \left( \frac{\eta}{x} \right)^{\sigma})^{3 + k}} \cos \eta \, d\eta. \label{eq:DecayEst_0200;gkdv_inst}
    \end{align}

    On the first term of \eqref{eq:DecayEst_0200;gkdv_inst}, since $\sigma - 2 > -1$, we obtain it by the dominated convergence theorem that
    \begin{equation}
        \int_{0}^{\pi} \frac{\eta^{\sigma - 2}}{(1 + \left( \frac{\eta}{x} \right)^{\sigma})^{1 + k}} \cos \eta \, d\eta \rightarrow \int_{0}^{\pi} \eta^{\sigma - 2} \cos \eta \, d\eta \label{eq:DecayEst_0201;gkdv_inst}
    \end{equation}
    as $x \rightarrow +\infty$.

    Next, we consider the second term of \eqref{eq:DecayEst_0200;gkdv_inst}.
    By integration by parts, we obtain
    \begin{align}
        &\int_{\pi}^{\infty} \frac{\eta^{\sigma - 2}}{(1 + \left( \frac{\eta}{x} \right)^{\sigma})^{3 + k}} \cos \eta \, d\eta \\
        &= d_{1} \int_{\pi}^{\infty} \frac{\eta^{\sigma - 3}}{(1 + \left( \frac{\eta}{x} \right)^{\sigma})^{3 + k}} \sin \eta \, d\eta + \frac{d_{2}}{x^{\sigma}} \int_{\pi}^{\infty} \frac{\eta^{2\sigma - 3}}{(1 + \left( \frac{\eta}{x} \right)^{\sigma})^{4 + k}} \sin \eta \, d\eta
    \end{align}
    with coefficients $c_{j} \in \mathbb{R}$ ($j = 1, 2$).
    The dominated convergence theorem yields that
    \begin{equation}
        \int_{\pi}^{\infty} \frac{\eta^{\sigma - 3}}{(1 + \left( \frac{\eta}{x} \right)^{\sigma})^{3 + k}} \sin \eta \, d\eta \rightarrow \int_{\pi}^{\infty} \eta^{\sigma - 3} \sin \eta \, d\eta
    \end{equation}
    as $x \rightarrow + \infty$.
    Moreover, since
    \begin{equation}
        \frac{1}{x^{\sigma}} \frac{\eta^{2\sigma - 3}}{(1 + \left( \frac{\eta}{x} \right)^{\sigma})^{4 + k}} \sin \eta = \frac{\left( \frac{\eta}{x} \right)^{\sigma}}{(1 + \left( \frac{\eta}{x} \right)^{\sigma})^{4 + k}} \eta^{\sigma - 3} \sin \eta \rightarrow 0
    \end{equation}
    as $x \rightarrow + \infty$ for each $\eta \in (\pi, \infty)$, we see that
    \begin{equation}
        \frac{1}{x^{\sigma}} \int_{\pi}^{\infty} \frac{\eta^{2\sigma - 3}}{(1 + \left( \frac{\eta}{x} \right)^{\sigma})^{4 + k}} \sin \eta \, d\eta \rightarrow 0
    \end{equation}
    as $x \rightarrow + \infty$.
    Therefore, we conclude that
    \begin{equation}
        \int_{\pi}^{\infty} \frac{\eta^{\sigma - 2}}{(1 + \left( \frac{\eta}{x} \right)^{\sigma})^{3 + k}} \cos \eta \, d\eta \rightarrow d_{2} \int_{\pi}^{\infty} \eta^{\eta - 3} \sin \eta \, d\eta \label{eq:DecayEst_0202;gkdv_inst}
    \end{equation}
    as $x \rightarrow + \infty$.
    Combining \eqref{eq:DecayEst_0200;gkdv_inst}, \eqref{eq:DecayEst_0201;gkdv_inst}, and \eqref{eq:DecayEst_0202;gkdv_inst}, we obtain
    \begin{equation}
        x^{\sigma - 1} I_{3}(x) \rightarrow \int_{0}^{\pi} \eta^{\sigma - 2} \cos \eta \, d\eta + d_{2} \int_{\pi}^{\infty} \eta^{\sigma - 3} \sin \eta \, d\eta
    \end{equation}
    as $x \rightarrow +\infty$.

    Finally, we obtain that
    \begin{align}
        x^{1 + \sigma} K(x) &= b_{1}x^{\sigma - 1} I_{1}(x) + b_{2} x^{\sigma - 1} I_{2}(x) + b_{3} x^{\sigma - 1} I_{3}(x) \\
        & \rightarrow
        \left\{
            \begin{aligned}
                & 0, & & \text{if} \ m \geq 2, \\
                & b_{3} \left( \int_{0}^{\pi} \eta^{\sigma - 2} \cos \eta \, d\eta + d_{2} \int_{\pi}^{\infty} \eta^{\sigma - 3} \sin \eta \, d\eta \right), & & \text{if} \ m = 1
            \end{aligned}
        \right.
    \end{align}
    as $x \rightarrow +\infty$.
    This implies the statement.
\end{proof}

Now we proceed the proof of statement $\text{(\textasteriskcentered)}_{\kappa}$ for $\kappa \geq 1$ by induction.

\begin{proof}
    We let $\kappa \in \mathbb{Z}_{+}$ be fixed and assume that
    \begin{equation}
        |\partial_{x}^{l} \phi(x)| \leq C \langle x \rangle^{-(1 + \sigma) - \min\{ l, \kappa \}}
    \end{equation}
    holds for all $l \in \mathbb{Z}_{+}$.
    Then we shall show that
    \begin{equation}
        |\partial_{x}^{l} \phi(x)| \leq C \langle x \rangle^{-(1 + \sigma) - (\kappa + 1)}
    \end{equation}
    holds for $l \geq \kappa + 1$.
    
    Let $l \in \mathbb{N}$ satisfy $l \geq \kappa + 1$, and put $\alpha = l - (\kappa + 1)$.
    Now we consider the term
    \begin{equation}
        x^{\kappa + 1} \frac{\partial_{x}^{l}}{1 + D_{x}^{\sigma}} \phi^{r} = x^{\kappa + 1} \frac{\partial_{x}^{\kappa + 1}}{1 + D_{x}^{\sigma}} \partial_{x}^{\alpha}\{ \phi^{r} \}
    \end{equation}
    with $r \in \mathbb{N}$ satisfying $r \geq 2$.
    Since the indices $\gamma = \kappa + 1 - j$, $\beta = \kappa + 1 - j$, and $\eta = \alpha$ satisfy the assumption of Lemma \ref{lem:DecayEst_boundedness_dxdf;gkdv_inst} for $0 \leq j \leq \kappa + 1$, we obtain
    \begin{equation}
        \left| \partial_{x}^{\kappa + 1 - j} \left\{ x^{\kappa + 1 - j} \partial_{x}^{\alpha} \{ \phi^{r} \} \right\}(x) \right| \leq C \langle x \rangle^{-(1 + \sigma)}, \label{eq:DecayEst_0210;gkdv_inst}
    \end{equation}
    which implies that $\partial_{x}^{\kappa + 1 - j} \left\{ x^{\kappa + 1 - j} \partial_{x}^{\alpha} \{ \phi^{r} \} \right\} \in L^{2}(\mathbb{R})$ for all $0 \leq j \leq \kappa + 1$.
    Then, by Lemma \ref{lem:DecayEst_Commutator;gkdv_inst}, we obtain
    \begin{align}
        x^{\kappa + 1} \frac{\partial_{x}^{\kappa + 1}}{(1 + D_{x}^{\sigma})} \partial_{x}^{\alpha} \{ \phi^{r} \} &= \left[ x^{\kappa + 1}, \frac{\partial_{x}^{\kappa + 1}}{1 + D_{x}^{\sigma}} \right] \partial_{x}^{\alpha} \{ \phi^{r} \} + \frac{\partial_{x}^{\kappa + 1}}{1 + D_{x}^{\sigma}} x^{\kappa + 1} \partial_{x}^{\alpha} \{ \phi^{r} \} \\
        &= \sum_{j=1}^{\kappa + 1} \sum_{k=1}^{j} \sum_{m=1}^{k} c_{\kappa, j, k, m} \frac{D_{x}^{\sigma m - 2k} \partial_{x}^{2k}}{(1 + D_{x}^{\sigma})^{1 + k}} \partial_{x}^{\kappa + 1 - j}\left\{ x^{\kappa + 1 - j} \partial_{x}^{\alpha} \{ \phi^{r} \} \right\} \\
        & \qquad \qquad + \sum_{j= 0}^{\kappa + 1} c_{\kappa, j} \frac{1}{1 + D_{x}^{\sigma}} \partial_{x}^{\kappa + 1 - j} \left\{ x^{\kappa + 1 - j} \partial_{x}^{\alpha} \{ \phi^{r} \} \right\}. \label{eq:DecayEst_0220;gkdv_inst}
    \end{align}
    
    First, we consider the first term of \eqref{eq:DecayEst_0220;gkdv_inst}.
    By \eqref{eq:DecayEst_0210;gkdv_inst} and Lemmas \ref{lem:DecayEst_FLS_ConvolutionLimit;gkdv_inst}, \ref{lem:DecayEst_K_in_Commutator;gkdv_inst}, we see it for each $j, k, m$ that
    \begin{equation}
        \left| \frac{D_{x}^{\sigma m - 2k} \partial_{x}^{2k}}{(1 + D_{x}^{\sigma})^{1 + k}} \partial_{x}^{\kappa + 1 - j} \left\{ x^{\kappa + 1 - j} \partial_{x}^{\alpha} \{ \phi^{r} \} \right\} (x) \right| = \left| K_{k, m} \ast \left( x^{\kappa + 1 - j} \partial_{x}^{\alpha} \{ \phi^{r} \} \right) \right| \leq C | x |^{-(1 + \sigma)} \label{eq:DecayEst_0230;gkdv_inst}
    \end{equation}
    holds for $|x| \geq 1$.
    
    Here we consider the second summation of \eqref{eq:DecayEst_0220;gkdv_inst}.
    Since \eqref{eq:DecayEst_0210;gkdv_inst} holds, we have it by Lemma \ref{lem:DecayEst_FLS_ConvolutionLimit;gkdv_inst} that
    \begin{equation}
        \left| \frac{1}{1 + D_{x}^{\sigma}} \partial_{x}^{\kappa + 1 - j} \left\{ x^{\kappa + 1 - j} \partial_{x}^{\alpha} \{ \phi^{r} \} \right\}(x) \right| = \left| G_{\sigma} \ast \left( \partial_{x}^{\kappa + 1 - j} \left\{ x^{\kappa + 1 - j} \partial_{x}^{\alpha} \{ \phi^{r} \} \right\} \right)(x) \right| \leq C|x|^{-(1 + \sigma)} \label{eq:DecayEst_0240;gkdv_inst}
    \end{equation}
    holds for $|x| \geq 1$.

    Therefore, combining \eqref{eq:DecayEst_0220;gkdv_inst}, \eqref{eq:DecayEst_0230;gkdv_inst}, and \eqref{eq:DecayEst_0240;gkdv_inst}, we obtain
    \begin{equation}
        \left| x^{\kappa + 1} \frac{\partial_{x}^{\kappa + 1}}{(1 + D_{x}^{\sigma})} \partial_{x}^{\alpha} \{ \phi^{r} \} (x) \right| \leq C|x|^{-(1 + \sigma)}
    \end{equation}
    for $|x| \geq 1$.

    Finally, since
    \begin{equation}
        x^{\kappa + 1} \partial_{x}^{l} \phi (x) = a_{1} x^{\kappa + 1} \frac{\partial_{x}^{l}}{1 + D_{x}^{\sigma}} \phi^{p} + a_{2} x^{\kappa + 1} \frac{\partial_{x}^{l}}{1 + D_{x}^{\sigma}} \phi^{q},
    \end{equation}
    we conclude that
    \begin{equation}
        |x^{\kappa + 1} \partial_{x}^{l} \phi(x)|\leq C |x|^{-(1 + \sigma)},
    \end{equation}
    that is,
    \begin{equation}
        |\partial_{x}^{l} \phi(x)| \leq C |x|^{-(1 + \sigma) - (\kappa + 1)}
    \end{equation}
    holds for $|x| \geq 1$.
    This complete the proof.
\end{proof}

\section*{Acknowledgments}
The author would like to express his gratitude to Professor Masahito Ohta for his encouragements and valuable comments on this study.

\bibliography{reference_list}
\bibliographystyle{amsplain}

\end{document}